\pdfoutput=1
\RequirePackage{ifpdf}
\ifpdf 
\documentclass[pdftex]{sigma}
\else
\documentclass{sigma}
\fi

\numberwithin{equation}{section}

\newtheorem{Theorem}{Theorem}[section]
\newtheorem*{Theorem*}{Theorem}
\newtheorem{Corollary}[Theorem]{Corollary}

\newtheorem{Proposition}[Theorem]{Proposition}
 { \theoremstyle{definition}

 }

\begin{document}

\allowdisplaybreaks

\newcommand{\arXivNumber}{2309.09341}

\renewcommand{\PaperNumber}{083}

\FirstPageHeading

\ShortArticleName{Kernel Function, $q$-Integral Transformation and $q$-Heun Equations}

\ArticleName{Kernel Function, $\boldsymbol{q}$-Integral Transformation\\
 and $\boldsymbol{q}$-Heun Equations}

\Author{Kouichi TAKEMURA}

\AuthorNameForHeading{K.~Takemura}

\Address{Department of Mathematics, Ochanomizu University,\\ 2-1-1 Otsuka, Bunkyo-ku, Tokyo 112-8610, Japan}
\Email{\href{mailto:takemura.kouichi@ocha.ac.jp}{takemura.kouichi@ocha.ac.jp}}

\ArticleDates{Received February 14, 2024, in final form September 12, 2024; Published online September 19, 2024}

\Abstract{We find kernel functions of the $q$-Heun equation and its variants. We apply them to obtain $q$-integral transformations of solutions to the $q$-Heun equation and its variants. We discuss special solutions of the $q$-Heun equation from the perspective of the $q$-integral transformation.}

\Keywords{kernel function; Jackson integral; Heun equation; $q$-Heun equation; Ruijsenaars system}

\Classification{39A13; 44A20}

\section{Introduction}\label{section1}

Heun's differential equation is given by
\begin{gather}
\frac{{\rm d}^2y}{{\rm d}z^2} + \left( \frac{\gamma}{z}+\frac{\delta }{z-1}+\frac{\epsilon}{z-t}\right) \frac{{\rm d}y}{{\rm d}z} + \frac{\alpha \beta z -B}{z(z - 1)(z - t)} y = 0,
\label{eq:Heun}
\end{gather}
with the condition $\gamma +\delta +\epsilon =\alpha +\beta +1$.
It sometimes appears in the study of mathematical physics.
Relationship with the Kerr black holes is remarkable among them.
The parameter $B$ in equation \eqref{eq:Heun} is called the accessory parameter, and it plays special roles in the analysis of the Heun equation.

It is known that Heun's differential equation admits integral transformations.
Kazakov and Slavyanov established Euler integral transformations of Heun's differential equation in \cite{KS}.
Set%
\begin{gather*}
 \gamma '=\gamma +1 -\alpha , \qquad \delta' =\delta +1-\alpha , \qquad \epsilon '=\epsilon +1-\alpha , \qquad \alpha '=2-\alpha , \\
\beta '= -\alpha +\beta +1 ,\qquad B'=B+(1-\alpha )(\epsilon +\delta t+(\gamma -\alpha ) (t+1))
\end{gather*}
and assume that the function $v(w)$ is a solution to the Heun equation with the parameters $\gamma '$, $\delta' $, $\epsilon '$, $\alpha '$, $\beta '$, $B'$, i.e.,
\begin{gather}
\frac{{\rm d}^2 v}{{\rm d}w^2} + \left( \frac{\gamma '}{w}+\frac{\delta '}{w-1}+\frac{\epsilon '}{w-t}\right) \frac{{\rm d}v}{{\rm d}w} + \frac{\alpha ' \beta ' w -B '}{w(w - 1)(w - t)} v = 0,
\label{eq:Heunw}
\end{gather}
then it was established in \cite{KS} that the function
\begin{equation*}
y(z)=\int _{C} v(w) (z-w)^{-\alpha } {\rm d}w
\end{equation*}
is a solution to equation \eqref{eq:Heun} for a suitable cycle $C$.
Examples of the cycles were described explicitly in \cite{Kaz1,KS}.
Note that the integral transformation was also obtained in \cite{TakI,TakM} by considering the middle convolution for some special system of Fuchsian equations.
As an application of the integral transformation of the Heun equation, a correspondence of special solutions of the Heun equation was obtained in \cite{TakIT}.
Namely, the polynomial-type solutions (i.e., the solutions of equation \eqref{eq:Heunw} written as a product of a polynomial and a function $w^{\rho _0} (w-1)^{\rho _1} (w-t)^{\rho _t}$ for some constants $\rho _0$, $\rho _1$, $\rho _t$) correspond to the solutions written as a finite sum of the hypergeometric functions by the integral transformation.

Some of integral transformations are related with the kernel function.
Let ${\bf x} =(x_1, \dots, x_m)$ and ${\bf y} =(y_1, \dots, y_n)$ be the variables and $({\mathcal A}_{\bf x}, {\mathcal B}_{\bf y})$ be a pair of operators which act on meromorphic functions in {\bf x} and {\bf y} respectively.
In \cite{KNS}, $\Phi ({\bf x}; {\bf y})$ is called a kernel function for the pair $({\mathcal A}_{\bf x}, {\mathcal B}_{\bf y})$, if it satisfies a functional equation of the form
$
{\mathcal A}_{\bf x} \Phi ({\bf x}; {\bf y}) = {\mathcal B}_{\bf y} \Phi ({\bf x}; {\bf y})$.
Langmann studied the kernel functions systematically in the analysis of eigenfunctions for quantum integrable systems such as the Calogero--Moser--Sutherland system and the Inozemtsev system~\cite{L10,LT}.
Note that we can regard the eigenvalue problem of the Inozemtsev system as a~multi-variable generalization of Heun's differential equation (see \cite{TakS} for a review).
By using the kernel function identity for the Inozemtsev system, we can obtain integral transformations of eigenfunctions of the Inozemtsev system and related systems \cite{LT}.
The Ruijsenaars--van Diejen system (see \cite{RuiN,vD0}) is a relativistic (or discrete) quantum integrable system of the Inozemtsev type, and kernel functions of them were studied by Ruijsenaars \cite{Rui06,Rui09} and Komori, Noumi and Shiraishi \cite{KNS}.
Atai and Noumi \cite{AN} applied the kernel functions of the Ruijsenaars--van Diejen system to the integral transformations of the eigenfunctions.

In this paper, we obtain kernel function identities for $q$-deformations of Heun's differential equations and apply them to integral transformations.
Here, the $q$-Heun equation was introduced by Hahn \cite{Hahn} as the form
\begin{align}
 \big\{ a_2 x^2 +a_1 x+ a_0 \big\} g(x/q) -\big\{ b_2 x^2 + b_1 x + b_0 \big\} g(x) + \big\{ c_2 x^2 + c_1 x+ c_0 \big\} g(xq) =0,
 \label{eq:qHeun}
\end{align}
with the condition $a_2 a_0 c_2 c_0 \neq 0$.
Degenerations of the Ruijsenaars--van Diejen system were investigated in \cite{TakR,vD0} and the $q$-Heun equation was rediscovered in \cite{TakR} by considering degeneration of the Ruijsenaars--van Diejen system four times.
It was obtained as an eigenvalue equation of the fourth degeneration of the Ruijsenaars--van Diejen operator of one variable
\begin{gather}
 A^{\langle 4 \rangle} ( x; h_1, h_2, l_1, l_2, \alpha _1, \alpha _2, \beta ) \nonumber \\
\qquad = x^{-1} \big(x-q^{h_1 + 1/2} t_1\big) \big(x-q^{h_2 +1/2} t_2\big) T_{x}^{-1} \nonumber \\
 \phantom{\qquad =}{}+ q^{\alpha _1 +\alpha _2} x^{-1} \big(x - q^{l_1 -1/2} t_1\big) \big(x - q^{l_2 -1/2} t_2\big) T_{x} \nonumber\\
 \phantom{\qquad =}{}-\big\{ \big(q^{\alpha _1} +q^{\alpha _2} \big) x + q^{(h_1 +h_2 + l_1 +l_2 +\alpha _1 +\alpha _2 )/2}\big( q^{\beta/2} + q^{-\beta/2} \big) t_1 t_2 x^{-1} \big\}, \label{eq:A4op0}
\end{gather}
where $T_{x} ^{\pm 1} g(x)=g\big(q ^{\pm 1} x\big) $.
Namely, equation \eqref{eq:qHeun} admits an expression as
\begin{equation*}
A^{\langle 4 \rangle} ( x; h_1, h_2, l_1, l_2, \alpha _1, \alpha _2, \beta ) g(x) =Eg(x),
\end{equation*}
where $E$ is an arbitrary complex number.
The eigenvalue $E$ corresponds to the parameter $b_1 $ in equation \eqref{eq:qHeun}, and it plays a role of the accessory parameter.
As $q\to 1$, we essentially obtain the differential equation
\begin{gather}
\frac{{\rm d}^2y}{{\rm d}z^2} + \left( \frac{\gamma}{z}+\frac{\delta }{z-t_1}+\frac{\epsilon}{z-t_2}\right) \frac{{\rm d}y}{{\rm d}z} + \frac{\alpha \beta z -B}{z(z - t_1)(z - t_2)} y = 0 ,
 \label{eq:Heunt1t2}
\end{gather}
which is equivalent to Heun's differential equation.
Note that the parameters $t_1$ and $t_2$ are common in equations \eqref{eq:A4op0} and \eqref{eq:Heunt1t2}.
Other $q$-deformations of Heun's differential equations were introduced in \cite{TakqH} by focusing on the third degenerate Ruijsenaars--van Diejen operator~$A^{\langle 3 \rangle} ( x; h_1, h_2, h_3, l_1, l_2, l_3, \alpha , \beta )$ (see equation \eqref{eq:qthirdal}) and the second degenerate one $A^{\langle 2 \rangle} ( x; h_1, \allowbreak h_2, h_3, h_4, l_1, l_2, l_3, l_4 , \alpha )$ (see equation \eqref{eq:qsecondal}), and they were called the variants of the $q$-Heun equation.

In this paper, we obtain kernel function identities for the $q$-Heun equation and its variants.
Set
\begin{equation*}
P^{(1)}_{\mu ,\mu _0 } (x,s)= \frac{(q^{\mu }s/x;q)_{\infty }}{(q^{\mu _0 }s/x;q)_{\infty }}, \qquad (a;q)_{\infty } = \prod _{j=0}^{\infty } \big(1-q ^j a\big).
\end{equation*}
We obtain a kernel function identity for the $q$-Heun equation as follows.

\begin{Theorem} \label{thm:A4KerId0}
If the parameters satisfy
\begin{gather*}
 \chi = \big( \tilde{h}_1 + \tilde{h}_2 - \tilde{l}_1 - \tilde{l}_2 + \tilde{\alpha }_1 - \tilde{\alpha }_2 - \tilde{\beta } \big)/2 , \qquad \nu = \mu _0 + \alpha _1 - \tilde{\alpha }_2 , \qquad \mu = \mu _0 + \chi + 1, \\
 \beta = - \tilde{\beta } - \chi , \qquad \alpha _2 = \alpha _1 + \tilde{\alpha }_1 - \tilde{\alpha }_2 - \chi , \\
  l_i =\tilde{h}_i + \mu _0 , \qquad h_i = \tilde{l}_i +\mu _0 + \chi ,\qquad i=1,2,
\end{gather*}
then the function \smash{$\Phi (x,s ) = x^{- \alpha _1} s^{1+\chi - \tilde{\alpha }_1} P^{(1)}_{\mu ,\mu _0} (x,s)$} satisfies
\begin{equation*}
 A^{\langle 4 \rangle} ( x; h_1, h_2, l_1, l_2, \alpha _1, \alpha _2, \beta ) \Phi (x,s ) = q^{\nu } A^{\langle 4 \rangle} \big( s; \tilde{h}_1, \tilde{h}_2, \tilde{l}_1, \tilde{l}_2, \tilde{\alpha }_1, \tilde{\alpha }_2, \tilde{\beta } \big) \Phi (x,s ) .
\end{equation*}
\end{Theorem}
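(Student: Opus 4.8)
The plan is to convert the claimed operator identity into an identity between two explicit rational functions, and then to verify that rational identity by matching poles and behaviour at infinity. \emph{Step 1 (shift formulas).} Because $P^{(1)}_{\mu,\mu_0}(x,s)$ depends on $x$ and $s$ only through the ratio $s/x$, and because $(a;q)_\infty=(1-a)(qa;q)_\infty$, a one-line computation gives
\[
\frac{T_x^{-1}\Phi}{\Phi}=q^{\alpha_1}\,\frac{x-q^{\mu_0}s}{x-q^{\mu}s},\qquad \frac{T_x\Phi}{\Phi}=q^{-\alpha_1}\,\frac{x-q^{\mu-1}s}{x-q^{\mu_0-1}s},
\]
and $T_s^{-1}\Phi/\Phi$, $T_s\Phi/\Phi$ differ from $T_x\Phi/\Phi$, $T_x^{-1}\Phi/\Phi$ only by the powers of $q$ contributed by the prefactor $s^{1+\chi-\tilde{\alpha}_1}$ (note $T_x^{-1}$ and $T_s$ act identically on $s/x$, and likewise $T_x$ and $T_s^{-1}$). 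Substituting these into $A^{\langle 4\rangle}(x;h_1,h_2,l_1,l_2,\alpha_1,\alpha_2,\beta)\Phi$ and into $A^{\langle 4\rangle}(s;\tilde{h}_1,\tilde{h}_2,\tilde{l}_1,\tilde{l}_2,\tilde{\alpha}_1,\tilde{\alpha}_2,\tilde{\beta})\Phi$ and dividing by $\Phi$, the assertion becomes the equality of two explicit rational functions $F(x,s)$ and $q^{\nu}G(x,s)$.

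\emph{Step 2 (poles and asymptotics).} Viewed as functions of $x$ with $s$ a parameter, both $F$ and $q^{\nu}G$ have at most simple poles at $x=q^{\mu}s$ and $x=q^{\mu_0-1}s$; in addition $F$ has a potential simple pole at $x=0$ coming from the $x^{-1}$-terms of $A^{\langle 4\rangle}(x;\dots)$, and $q^{\nu}G$ a potential pole at $s=0$. Both are bounded as $x\to\infty$, since on the $F$ side the coefficient of $x$ is $q^{\alpha_1}+q^{\alpha_2}-(q^{\alpha_1}+q^{\alpha_2})=0$, while on the $G$ side the rational factors tend to $1$. I would therefore verify $F=q^{\nu}G$ by checking, in order: (i) the residues at $x=q^{\mu}s$ agree; (ii) the residues at $x=q^{\mu_0-1}s$ agree; (iii) $F$ is regular at $x=0$ and $q^{\nu}G$ is regular at $s=0$; (iv) the limits as $x\to\infty$, which are Laurent polynomials in $s$, agree. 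After (i)--(iii), $F-q^{\nu}G$ is a polynomial in $x$ that is bounded at infinity, hence constant in $x$, so (iv) closes the argument.

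\emph{Step 3 (how the hypotheses enter; the obstacle).} Each of the checks in Step 2 reduces, after cancelling a common factor, to an equality between short sums of monomials $q^{(\cdot)}t_1^{a}t_2^{b}s^{c}$, and holds exactly because of the stated parameter relations. The residue comparison at $x=q^{\mu}s$ uses $h_i=\tilde{l}_i+\mu_0+\chi$ (so that $h_i+1/2-\mu=\tilde{l}_i-1/2$) and forces $\nu=\mu_0+\alpha_1-\tilde{\alpha}_2$; the comparison at $x=q^{\mu_0-1}s$ uses $l_i=\tilde{h}_i+\mu_0$ and yields $\alpha_2=\alpha_1+\tilde{\alpha}_1-\tilde{\alpha}_2-\chi$; the regularity of $q^{\nu}G$ at $s=0$ is automatic from the definition of $\chi$, while that of $F$ at $x=0$ uses $\beta=-\tilde{\beta}-\chi$ together with the relations already listed; finally the limit comparison (iv) is an identity in $s$ whose $s^{1}$-, $s^{0}$-, and $s^{-1}$-parts all follow from the same relations together with $\mu=\mu_0+\chi+1$. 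The only real difficulty is bookkeeping: one must track a fair number of $q$-exponents (and $\pm1/2$ shifts) through each cancellation, with no conceptual obstruction. An equivalent, equally mechanical alternative is to clear all denominators at once and match coefficients of the resulting Laurent polynomial in $x$ and $s$.
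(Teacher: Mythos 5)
Your proposal is correct and follows essentially the same route as the paper: dividing by $\Phi$ and using the shift relations \eqref{eq:Pmumuprel} reduces the claim to exactly the rational identity \eqref{eq:A4KerIdequiv}, and your residue matching at $x=q^{\mu}s$ and $x=q^{\mu_0-1}s$ is equivalent to the paper's observation that $q^{\alpha_1}a_0(x)-q^{\nu+1+\chi-\tilde{\alpha}_1}\tilde{c}_0(s)$ and $q^{-\alpha_1}c_0(x)-q^{\nu-1-\chi+\tilde{\alpha}_1}\tilde{a}_0(s)$ factor through $(x-q^{\mu}s)$ and $(x-q^{\mu_0-1}s)$ respectively, using $h_i=\tilde{l}_i+\mu_0+\chi$ and $l_i=\tilde{h}_i+\mu_0$. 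Your remaining checks (regularity at $x=0$, $s=0$ and the $x\to\infty$ comparison) amount to the paper's final Laurent-polynomial matching with $b_0(x)-q^{\nu}\tilde{b}_0(s)$, so the argument is sound, only organized slightly differently.
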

We also obtain kernel function identities for the variants of the $q$-Heun equation in Theorems~\ref{thm:A3KerId} and \ref{thm:A2KerId}.
As an application, we have $q$-integral transformations of solutions to the $q$-Heun equation and its variants.
A $q$-integral transformation for the $q$-Heun equation was already obtained in \cite{STT2} by using the $q$-middle convolution which was introduced by Sakai and Yamaguchi \cite{SY}.
However, convergence of the $q$-integral transformation associated with the $q$-middle convolution was not discussed in \cite{SY}.
In this paper, we obtain $q$-integral transformations for the $q$-Heun equation and its variants without using the $q$-middle convolution and we discuss the convergence directly.
It seems that $q$-integral transformations for the variants of the $q$-Heun equation have not been known before.

This paper is organized as follows.
In Section~\ref{sec:Kerfunc}, we obtain identities for kernel functions related to the $q$-Heun equation and its variants.
In Section~\ref{sec:qinttrans}, we formulate a method of finding a $q$-integral transformation by using a kernel function and its identity.
In Section~\ref{sec:qintHeunvar}, we obtain $q$-integral transformations for the $q$-Heun equation and its variants.
In Section~\ref{sec:solqHeun}, we discuss special solutions of the $q$-Heun equation from the perspective of the $q$-integral transformation.
In Section~\ref{sec:CR}, we give concluding remarks.

Throughout this paper, we assume that $q$ is a complex number such that $0<|q|<1$.

\section[Kernel functions related to q-Heun equations]{Kernel functions related to $\boldsymbol{q}$-Heun equations} \label{sec:Kerfunc}

We introduce functions which appear in the kernel function identities.
Recall that the function~\smash{$P^{(1)}_{\mu ,\mu _0 } (x,s) $} was defined as
\begin{equation*}
P^{(1)}_{\mu ,\mu _0 } (x,s)= \frac{(q^{\mu }s/x;q)_{\infty }}{(q^{\mu _0 }s/x;q)_{\infty }}.
\end{equation*}
It satisfies
\begin{align}
& P_{\mu ,\mu _0} (x,s/q)= P_{\mu ,\mu _0 } (qx,s) = \frac{x-q^{\mu -1 }s}{x-q^{\mu _0 -1} s} P_{\mu ,\mu _0 } (x,s), \nonumber \\
& P_{\mu ,\mu _0 } (x,qs)= P_{\mu ,\mu _0 } (x/q,s) = \frac{x-q^{\mu _0 } s}{x-q^{\mu }s} P_{\mu ,\mu _0 } (x,s) . \label{eq:Pmumuprel}
\end{align}
The function
\begin{equation*}
P^{(2)}_{\mu ,\mu _0 } (x,s)= (x/s)^{\mu - \mu _0 } \frac{\big(q^{- \mu _0 +1 } x/s;q\big)_{\infty }}{\big(q^{- \mu +1 } x/s ;q\big)_{\infty }}
\end{equation*}
also satisfies equation \eqref{eq:Pmumuprel}.
Set $\vartheta _q (t) = (t, q/t, q ;q)_{\infty } ( = (t ; q)_{\infty } (q/t ; q)_{\infty } (q ;q)_{\infty } ) $.
Then we have
\begin{gather}
\frac{P^{(2)}_{\mu ,\mu _0 } (x,q^n \xi )}{P^{(1)}_{\mu ,\mu _0 } (x,q^n \xi )} = \frac{P^{(2)}_{\mu ,\mu _0 } (x,\xi )}{P^{(1)}_{\mu ,\mu _0 } (x,\xi )} = (x/\xi )^{\mu - \mu _0 } \frac{\vartheta _q \big(q^{- \mu _0 +1 } x/\xi \big)}{\vartheta _q \big(q^{- \mu +1 } x/\xi \big)} \label{eq:P2P1theta}
\end{gather}
for $n \in {\mathbb Z} $.
On the limit with respect to the variable $s$, we have
\begin{gather}
\lim _{s \to 0} P^{(1)}_{\mu ,\mu _0 } (x,s) = 1, \qquad \lim _{s \to \infty } s^{\mu - \mu _0 } P^{(2)}_{\mu ,\mu _0 } (x,s) = x^{\mu - \mu _0 } ,
\label{eq:P12lim1}
\end{gather}
and it follows from equation \eqref{eq:P2P1theta} that
\begin{gather*}
 \lim _{L \to + \infty } P^{(2)}_{\mu ,\mu _0 } (x,s) | _{s=q^{L} \xi } = (x/\xi )^{\mu - \mu _0 } \frac{\vartheta _q \big(q^{- \mu _0 +1 } x/\xi \big)}{\vartheta _q \big(q^{- \mu +1 } x/ \xi \big)} , \\
 \lim _{K \to - \infty } s^{\mu - \mu _0 } P^{(1)}_{\mu ,\mu _0 } (x,s) | _{s=q^{K-1 } \xi } = \xi ^{\mu - \mu _0} \frac{\vartheta _q \big(q^{- \mu +1 } x/ \xi \big)}{\vartheta _q \big(q^{- \mu _0 +1 } x/\xi \big)} .
\end{gather*}

Recall that the fourth degeneration of the Ruijsenaars--van Diejen operator of one variable is written as
\begin{gather}
 A^{\langle 4 \rangle} ( x; h_1, h_2, l_1, l_2, \alpha _1, \alpha _2, \beta ) \nonumber \\
 \qquad = x^{-1} \big(x-q^{h_1 + 1/2} t_1\big) \big(x-q^{h_2 +1/2} t_2\big) T_{x}^{-1} \nonumber \\
\phantom{\qquad =}{}+ q^{\alpha _1 +\alpha _2} x^{-1} \big(x - q^{l_1 -1/2} t_1\big) \big(x - q^{l_2 -1/2} t_2\big) T_{x}\nonumber\\
\phantom{\qquad =}{} -\big\{ \big(q^{\alpha _1} +q^{\alpha _2} \big) x + q^{(h_1 +h_2 + l_1 +l_2 +\alpha _1 +\alpha _2 )/2}\big( q^{\beta/2} + q^{-\beta/2} \big) t_1 t_2 x^{-1} \big\}, \label{eq:A4op}
\end{gather}
where $T_{x} ^{\pm 1} g(x)=g\big(q ^{\pm 1} x\big) $, and it is related with the $q$-Heun equation as explained in Section~\ref{section1}.
We obtain a kernel function and its identity for the operator $ A^{\langle 4 \rangle} ( x; h_1, h_2, l_1, l_2, \alpha _1, \alpha _2, \beta ) $ as follows.
\begin{Theorem} \label{thm:A4KerId}
If the parameters satisfy
\begin{gather}
 \chi = \big( \tilde{h}_1 + \tilde{h}_2 - \tilde{l}_1 - \tilde{l}_2 + \tilde{\alpha }_1 - \tilde{\alpha }_2 - \tilde{\beta } \big)/2 , \qquad \nu = \mu _0 + \alpha _1 - \tilde{\alpha }_2 ,\qquad \mu = \mu _0 + \chi + 1, \nonumber \\
 \beta = - \tilde{\beta } - \chi , \qquad \alpha _2 = \alpha _1 + \tilde{\alpha }_1 - \tilde{\alpha }_2 - \chi , \nonumber\\
 l_i =\tilde{h}_i + \mu _0 , \qquad h_i = \tilde{l}_i +\mu _0 + \chi , \qquad i=1,2, \label{eq:A4KerIdp}
\end{gather}
then we have
\begin{align}
& A^{\langle 4 \rangle} ( x; h_1, h_2, l_1, l_2, \alpha _1, \alpha _2, \beta ) x^{- \alpha _1} s^{1+\chi - \tilde{\alpha }_1} P_{\mu ,\mu _0} (x,s) \nonumber \\
&\qquad = q^{\nu } A^{\langle 4 \rangle} \big( s; \tilde{h}_1, \tilde{h}_2, \tilde{l}_1, \tilde{l}_2, \tilde{\alpha }_1, \tilde{\alpha }_2, \tilde{\beta } \big) x^{- \alpha _1} s^{1+\chi - \tilde{\alpha }_1} P_{\mu ,\mu _0 } (x,s) , \label{eq:A4KerId}
\end{align}
where $P_{\mu ,\mu _0 } (x,s) $ is a function which satisfies equation \eqref{eq:Pmumuprel}.
\end{Theorem}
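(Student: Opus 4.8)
The plan is to compute $A^{\langle 4 \rangle}(x;h_1,h_2,l_1,l_2,\alpha_1,\alpha_2,\beta)\Phi(x,s)$ and $q^{\nu}A^{\langle 4 \rangle}(s;\tilde h_1,\tilde h_2,\tilde l_1,\tilde l_2,\tilde\alpha_1,\tilde\alpha_2,\tilde\beta)\Phi(x,s)$ explicitly and to see that they agree. In the first expression the factor $s^{1+\chi-\tilde\alpha_1}$ is constant with respect to $T_x^{\pm1}$ and pulls out, while $T_x^{\mp1}(x^{-\alpha_1}P_{\mu,\mu_0}(x,s))=q^{\pm\alpha_1}x^{-\alpha_1}P_{\mu,\mu_0}(q^{\mp1}x,s)$, and by the functional equations \eqref{eq:Pmumuprel} each $P_{\mu,\mu_0}(q^{\mp1}x,s)$ equals $P_{\mu,\mu_0}(x,s)$ times an explicit rational function of $x$ and $s$. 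Hence
\[
A^{\langle 4 \rangle}(x;h_1,h_2,l_1,l_2,\alpha_1,\alpha_2,\beta)\,\Phi(x,s)=F(x,s)\,\Phi(x,s),
\]
with $F(x,s)$ rational, whose only poles in $x$ are simple, at $x=0$, $x=q^{\mu}s$ and $x=q^{\mu_0-1}s$. Applying $q^{\nu}A^{\langle 4 \rangle}$ in the variable $s$ with tilde parameters to $\Phi(x,s)$ and again using \eqref{eq:Pmumuprel}, now for the shifts $s\mapsto q^{\pm1}s$, gives $G(x,s)\Phi(x,s)$ with $G(x,s)$ rational and having simple poles in $x$ only at $x=q^{\mu}s$ and $x=q^{\mu_0-1}s$; the crucial point is that the shifts $s\mapsto q^{\pm1}s$ on $P_{\mu,\mu_0}$ produce, via \eqref{eq:Pmumuprel}, exactly the same pole locations as the shifts $x\mapsto q^{\mp1}x$ do. Cancelling the common factor $\Phi(x,s)$, the assertion \eqref{eq:A4KerId} becomes the rational identity $F(x,s)=G(x,s)$.

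To establish $F=G$, first note that the two shift terms of $A^{\langle 4 \rangle}(x;\dots)$ each contribute a part growing linearly in $x$, and that these are killed by the term $-(q^{\alpha_1}+q^{\alpha_2})x$; a similar cancellation (using $-(q^{\tilde\alpha_1}+q^{\tilde\alpha_2})s$) occurs for $G$. Thus $F$ and $G$ are both bounded as $x\to\infty$, so $F-G$ is a rational function of $x$ with at worst simple poles at $x=0,\,q^{\mu}s,\,q^{\mu_0-1}s$ and a finite limit at infinity. Consequently it suffices to verify the vanishing of four quantities: the residues of $F-G$ at $x=q^{\mu}s$ and at $x=q^{\mu_0-1}s$, the residue of $F$ at $x=0$ (where $G$ is regular), and the difference of the limits of $F$ and $G$ as $x\to\infty$. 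Each is an elementary identity among monomials in $q^{1/2}$, $t_1$, $t_2$ and $s$, and this is exactly where the relations \eqref{eq:A4KerIdp} enter: the substitutions $l_i=\tilde h_i+\mu_0$, $h_i=\tilde l_i+\mu_0+\chi$ are what make the residues at $x=q^{\mu}s$ coincide, the relation $\mu=\mu_0+\chi+1$ links the two pole locations, and the formulas for $\beta$, $\alpha_2$ together with $\nu=\mu_0+\alpha_1-\tilde\alpha_2$ arrange the residue at $x=0$ to vanish and the limits at infinity to match. Equivalently, one may clear the common denominator $x\,s\,(x-q^{\mu}s)(x-q^{\mu_0-1}s)$ — which is the same for both sides — and check the resulting polynomial identity in $x$ and $s$ directly.

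Some of this bookkeeping is spared by the near-duality built into \eqref{eq:A4KerIdp}: under the exchange $x\leftrightarrow s$ combined with the replacement of the untilde parameters by the tilde ones prescribed there, $A^{\langle 4 \rangle}(x;\dots)$ is carried, up to the scalar $q^{\nu}$, to $A^{\langle 4 \rangle}$ in the variable $s$ with tilde parameters, so that the residue computation at $x=q^{\mu}s$ is transported to the one at $x=q^{\mu_0-1}s$, and the residue at $x=0$ pairs with the limit at $x=\infty$. The only genuine work is this last step, a finite but somewhat lengthy matching of $q$-exponents and of the coefficients of $t_1$, $t_2$; everything before it is the formal manipulation of $T_x^{\pm1}$ and $T_s^{\pm1}$ through \eqref{eq:Pmumuprel}. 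That verification is the main obstacle.
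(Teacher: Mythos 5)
Your reduction is sound and is essentially the paper's first step: after dividing by $\Phi$ and using \eqref{eq:Pmumuprel}, the claim becomes a rational identity in $x$ and $s$ (the paper's equation \eqref{eq:A4KerIdequiv}), and your pole-and-limit bookkeeping (simple poles only at $x=0$, $x=q^{\mu}s$, $x=q^{\mu_0-1}s$, finite limits as $x\to\infty$ after cancellation against $-(q^{\alpha_1}+q^{\alpha_2})x$) is a legitimate way to organize the verification of that identity. The genuine gap is that you never carry the verification out: you assert that the residues and the limits match ``where the relations \eqref{eq:A4KerIdp} enter'' and then concede that this finite matching of $q$-exponents and coefficients of $t_1$, $t_2$ is ``the main obstacle.'' That matching is the entire content of the theorem --- it is precisely where the hypotheses $l_i=\tilde h_i+\mu_0$, $h_i=\tilde l_i+\mu_0+\chi$, $\mu=\mu_0+\chi+1$, $\beta=-\tilde\beta-\chi$, $\alpha_2=\alpha_1+\tilde\alpha_1-\tilde\alpha_2-\chi$, $\nu=\mu_0+\alpha_1-\tilde\alpha_2$ are actually used --- so as written the proposal is a proof plan with its decisive step missing, not a proof.

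For comparison, the paper closes exactly this gap without any residue analysis: it pairs the $T_x^{-1}$ term with the $T_s$ term (both produce the multiplier $\frac{x-q^{\mu_0}s}{x-q^{\mu}s}$) and the $T_x$ term with the $T_s^{-1}$ term, and shows each paired coefficient factors, e.g.
$q^{-\alpha_1}c_0(x)-q^{\nu-1-\chi+\tilde\alpha_1}\tilde a_0(s)=q^{\alpha_2}\big(x-q^{\mu_0-1}s\big)\big(1-q^{l_1+l_2-\mu_0}t_1t_2x^{-1}s^{-1}\big)$,
so the offending denominators cancel identically and only a short Laurent-polynomial identity in $x$, $s$ remains, which the relations \eqref{eq:A4KerIdp} annihilate. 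If you want to finish along your own lines you must actually compute the three residues and the two limits at infinity; adopting the factorization above is shorter. Finally, be cautious with the ``near-duality'' shortcut: $\Phi$ and the functional equations \eqref{eq:Pmumuprel} are not symmetric under $x\leftrightarrow s$ without extra constant factors (for instance $P(x/q,s)=q^{\mu_0-\mu}\frac{s-q^{-\mu_0}x}{s-q^{-\mu}x}P(x,s)$), so transporting the residue computation at $x=q^{\mu}s$ to the one at $x=q^{\mu_0-1}s$ by that symmetry would itself require justification rather than saving work for free.
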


\begin{proof}
Set
\begin{gather*}
 a_0 (x) = x^{-1} \big(x-q^{h_1 + 1/2} t_1\big) \big(x-q^{h_2 +1/2} t_2\big) , \\
 b_0 (x) = - \big(q^{\alpha _1} +q^{\alpha _2} \big) x - q^{(h_1 +h_2 + l_1 +l_2 +\alpha _1 +\alpha _2 )/2} \big( q^{\beta/2} + q^{-\beta/2} \big) t_1 t_2 x^{-1} , \\
 c_0 (x) = q^{\alpha _1 +\alpha _2} x^{-1} \big(x - q^{l_1 -1/2} t_1\big) \big(x - q^{l_2 -1/2} t_2\big) , \\
 \tilde{a}_0 (s) = s^{-1} \big(s-q^{\tilde{h}_1 + 1/2} t_1\big) \big(s-q^{\tilde{h}_2 +1/2} t_2\big) , \\
 \tilde{b}_0 (s) = - \big(q^{\tilde{\alpha }_1} +q^{\tilde{\alpha }_2} \big) s - q^{(\tilde{h}_1 +\tilde{h}_2 + \tilde{l}_1 +\tilde{l}_2 +\tilde{\alpha }_1 +\tilde{\alpha }_2 )/2} \big( q^{\tilde{\beta }/2} + q^{-\tilde{\beta }/2} \big) t_1 t_2 s^{-1} , \\
 \tilde{c}_0 (s) = q^{\tilde{\alpha }_1 +\tilde{\alpha }_2} s^{-1} \big(s - q^{\tilde{l}_1 -1/2} t_1\big) \big(s - q^{\tilde{l}_2 -1/2} t_2\big) .
\end{gather*}
It follows from equation \eqref{eq:Pmumuprel} that equation \eqref{eq:A4KerId} is equivalent to the equation
\begin{gather}
 \big( q^{ \alpha _1} a_0 (x) - q^{\nu + 1+\chi - \tilde{\alpha }_1}\tilde{c}_0 (s) \big) \frac{x-q^{\mu _0 } s}{x-q^{\mu }s} \nonumber \\
\qquad {}+ \big( q^{- \alpha _1} c_0 (x) - q^{\nu -1-\chi + \tilde{\alpha }_1} \tilde{a}_0 (s) \big) \frac{x-q^{\mu -1 }s}{x-q^{\mu _0 -1} s} + b_0 (x) - q^{\nu } \tilde{b}_0 (s) =0 . \label{eq:A4KerIdequiv}
\end{gather}
By using the relations in equation \eqref{eq:A4KerIdp}, we have
\begin{gather*}
 q^{- \alpha _1} c_0 (x) - q^{\nu -1-\chi + \tilde{\alpha }_1} \tilde{a}_0 (s) \\
\qquad = q^{\alpha _2} \big\{ x^{-1} \big(x - q^{l_1 -1/2} t_1\big) \big(x - q^{l_2 -1/2} t_2\big)\\
\phantom{\qquad = }{} - q^{\mu _0 -1} s^{-1} \big(s-q^{l_1 -\mu _0 + 1/2} t_1\big) \big(s-q^{l_2-\mu _0 +1/2} t_2\big) \big\} \\
 \qquad= q^{\alpha _2} \big( x - q^{\mu _0 -1} s \big) \big( 1 - q^{l_1+l_2 -\mu _0 } t_1 t_2 x^{-1} s^{-1} \big) , \\
 q^{ \alpha _1} a_0 (x) - q^{\nu + 1+\chi - \tilde{\alpha }_1}\tilde{c}_0 (s) \\
 \qquad= q^{ \alpha _1} \big\{ x^{-1} \big(x-q^{h_1 + 1/2} t_1\big) \big(x-q^{h_2 +1/2} t_2\big) \\
 \phantom{\qquad = }{} - q^{\mu } s^{-1} \big(s - q^{h_1 +1/2 - \mu } t_1\big) \big(s - q^{h_2 +1/2 - \mu } t_2\big) \big\} \\
 \qquad= q^{ \alpha _1} \big( x - q^{\mu } s \big)\big( 1 - q^{h_1 + h_2 +1- \mu } t_1 t_2 x^{-1} s ^{-1} \big) .
\end{gather*}
Hence
\begin{gather*}
 \big( q^{ \alpha _1} a_0 (x) - q^{\nu + 1+\chi - \tilde{\alpha }_1}\tilde{c}_0 (s) \big) \frac{x-q^{\mu _0 } s}{x-q^{\mu }s} + \big( q^{- \alpha _1} c_0 (x) - q^{\nu -1-\chi + \tilde{\alpha }_1} \tilde{a}_0 (s) \big) \frac{x-q^{\mu -1 }s}{x-q^{\mu _0 -1} s} \\
 \qquad= q^{ \alpha _1} \big(x-q^{\mu _0 } s \big)\big( 1 - q^{h_1 + h_2 +1- \mu } t_1 t_2 x^{-1} s ^{-1} \big) \\
 \phantom{\qquad=}{} + q^{\alpha _2} \big(x-q^{\mu -1 }s \big) \big( 1 - q^{l_1+l_2 -\mu _0 } t_1 t_2 x^{-1} s^{-1} \big) \\
\qquad = \big( q^{ \alpha _1} + q^{\alpha _2} \big) x + \big(q^{\alpha _1+ h_1 + h_2 +1 + \mu _0 - \mu } + q^{\alpha _2 + l_1+l_2 -\mu _0 + \mu -1 } \big) t_1 t_2 x^{-1} \\
 \phantom{\qquad=}{}-\big( q^{\alpha _1 +\mu _0 } + q^{\alpha _2 + \mu -1 }\big)s - \big(q^{\alpha _1 + h_1 + h_2 +1- \mu } + q^{\alpha _2 + l_1+l_2 -\mu _0 } \big) t_1 t_2 s^{-1} ,
\end{gather*}
and we obtain equation \eqref{eq:A4KerIdequiv} by using equation \eqref{eq:A4KerIdp}.
\end{proof}

Note that equation \eqref{eq:A4KerIdp} is equivalent to
\begin{gather*}
 \chi = ( h_1 + h_2 - l_1 - l_2 + \alpha _1 - \alpha _2 - \beta )/2 , \qquad \nu = \mu _0 + \alpha _1 - \tilde{\alpha }_2 ,\qquad\mu = \mu _0 + \chi + 1, \\
 \tilde{\beta } = - \beta - \chi , \qquad \tilde{\alpha }_1 = \tilde{\alpha }_2 + \alpha _2 - \alpha _1 + \chi , \\
  \tilde{h}_i = l_i - \mu _0 , \qquad \tilde{l}_i = h_i - \mu _0 - \chi ,\qquad
 i=1,2.
\end{gather*}
Theorem \ref{thm:A4KerId0} follows immediately from Theorem \ref{thm:A4KerId}, because the function~\smash{$P^{(1)}_{\mu ,\mu _0 } (x,s) $} satisfies equation \eqref{eq:Pmumuprel}.

In order to define the variants of the $q$-Heun equation, we introduce the following operators:
\begin{gather}
 A^{\langle 3 \rangle} ( x; h_1, h_2, h_3, l_1, l_2, l_3, \alpha , \beta ) \nonumber \\
\qquad = x^{-1} \prod_{n=1}^3 \big(x- q^{h_n+1/2} t_n\big) T_{x}^{-1} + q^{2\alpha +1} x^{-1} \prod_{n=1}^3 \big(x- q^{l_n -1/2} t_n \big) T_x \nonumber \\
 \phantom{\qquad = }{} + q^{\alpha +1/2} \Biggl[ -\bigl(q^{1/2} +q^{-1/2} \bigr) x^2 +\sum _{n=1}^3 \big( q^{h_n} + q^{l_n} \big)t_n x  \nonumber \\
 \phantom{\qquad = }{}  + q^{(l_1 +l_2 +l_3 +h_1 +h_2 +h_3)/2} \big( q^{\beta/2} + q^{-\beta/2} \big) t_1 t_2 t_3 x^{-1} \Biggr] , \label{eq:qthirdal}
\\
 A^{\langle 2 \rangle} ( x; h_1, h_2, h_3, h_4, l_1, l_2, l_3, l_4 , \alpha ) \nonumber \\
 \qquad= x^{-2} \prod_{n=1}^4 \big(x- q^{h_n +1/2} t_n\big) T_{x}^{-1} + q^{2\alpha +1} x^{-2} \prod_{n=1}^4 \big(x- q^{l_n -1/2} t_n\big) T_{x} \nonumber \\
 \phantom{\qquad = }{} + q^{\alpha +1/2} \Biggl[ -\big(q^{1/2} +q^{-1/2} \big) x^2 + \sum _{n=1}^4 \big(q^{h_n}+ q^{l_n} \big) t_n x  \nonumber \\
  \phantom{\qquad = }{} + \prod_{n=1}^4 q^{(h_n +l_n)/2} t_n \left\{ - \big(q^{1/2} +q^{-1/2} \big) x^{-2} + \sum _{n=1}^4 \left( \frac{1}{q^{h_n}t_n} + \frac{1}{q^{l_n}t_n} \right) x^{-1} \right\} \Biggr] . \label{eq:qsecondal}
\end{gather}
The third and the second degenerate Ruijsenaars--van Diejen operators of one variable in \cite{TakqH} are realized as the case $\alpha =-1/2 $ in equations \eqref{eq:qthirdal} and \eqref{eq:qsecondal}, and conversely equations \eqref{eq:qthirdal} and~\eqref{eq:qsecondal} are obtained from the third and the second degenerate Ruijsenaars--van Diejen operators of one variable in \cite{TakqH} by appropriate gauge transformations.
The variant of the $q$-Heun equation of degree three is written as
\begin{equation*}
 A^{\langle 3 \rangle} ( x; h_1, h_2, h_3, l_1, l_2, l_3, \alpha , \beta ) g(x) =Eg(x), \qquad E \in {\mathbb C},
\end{equation*}
and the variant of the $q$-Heun equation of degree four is written as
\begin{equation*}
 A^{\langle 2 \rangle} ( x; h_1, h_2, h_3, h_4, l_1, l_2, l_3, l_4 , \alpha ) g(x) =Eg(x), \qquad E \in {\mathbb C}.
\end{equation*}
Note that the variant of the $q$-Heun equation of degree three is a $q$-deformation of the second order Fuchsian differential equation with four singularities $\{ t_1, t_2 , t_3 , \infty \} $ written as
\begin{equation*}
\frac{{\rm d}^2y}{{\rm d}z^2} + \left( \frac{\gamma}{z-t_1}+\frac{\delta }{z-t_2}+\frac{\epsilon}{z-t_3}\right) \frac{{\rm d}y}{{\rm d}z} + \frac{\alpha \beta z -B}{(z - t_1)(z - t_2)(z - t_3)} y = 0,
\end{equation*}
and the variant of the $q$-Heun equation of degree four is a $q$-deformation of the second order Fuchsian differential equation with four singularities $\{ t_1, t_2 , t_3 , t_4 \} $.

We obtain a kernel function and its identity for the operator $ A^{\langle 3 \rangle} ( x; h_1, h_2, h_3, l_1, l_2, l_3, \alpha , \beta ) $ as follows.
\begin{Theorem} \label{thm:A3KerId}
If the parameters satisfy
\begin{gather}
 \chi = \big( \tilde{h}_1 + \tilde{h}_2 + \tilde{h}_3 - \tilde{l}_1 - \tilde{l}_2 - \tilde{l}_3 - \tilde{\beta } \big)/2 , \qquad \nu = 2 \mu _0 + \alpha - \tilde{\alpha } + \chi ,\qquad \mu = \mu _0 + \chi +1, \nonumber \\
 \beta = -\tilde{\beta } - \chi , \qquad l_i = \tilde{h}_i +\mu _0 , \qquad h_i = \tilde{l}_i +\mu _0 +\chi , \qquad i=1,2,3, \label{eq:A3KerIdp}
\end{gather}
then we have
\begin{gather*}
 A^{\langle 3 \rangle} ( x; h_1, h_2, h_3, l_1, l_2, l_3, \alpha , \beta ) x^{-\alpha } s^{\chi +1 - \tilde{\alpha }} P_{\mu ,\mu _0 } (x,s) \\
\qquad = q^{\nu } A^{\langle 3 \rangle} \big( s; \tilde{h}_1, \tilde{h}_2, \tilde{h}_3, \tilde{l}_1, \tilde{l}_2, \tilde{l}_3, \tilde{\alpha } , \tilde{\beta }\big) x^{-\alpha } s^{\chi +1 - \tilde{\alpha }} P_{\mu ,\mu _0 } (x,s). \nonumber
\end{gather*}
\end{Theorem}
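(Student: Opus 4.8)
\emph{Proof proposal.}

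The plan is to follow the strategy of the proof of Theorem~\ref{thm:A4KerId}. Write
$A^{\langle 3 \rangle} ( x; h_1, h_2, h_3, l_1, l_2, l_3, \alpha , \beta ) = a_0(x)T_x^{-1} + c_0(x)T_x + b_0(x)$,
where $a_0 (x) = x^{-1} \prod_{n=1}^3 \big(x- q^{h_n+1/2} t_n\big)$, $c_0 (x) = q^{2\alpha +1} x^{-1} \prod_{n=1}^3 \big(x- q^{l_n -1/2} t_n \big)$, and $b_0(x)$ is the $T_x$-free summand of \eqref{eq:qthirdal}; introduce $\tilde a_0(s)$, $\tilde b_0(s)$, $\tilde c_0(s)$ in the same way from $A^{\langle 3 \rangle} \big( s; \tilde h_1, \tilde h_2, \tilde h_3, \tilde l_1, \tilde l_2, \tilde l_3, \tilde \alpha , \tilde \beta \big)$. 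Applying the two operators to $\Phi (x,s) = x^{-\alpha } s^{\chi +1 - \tilde \alpha } P_{\mu ,\mu _0 } (x,s)$, evaluating $T_x^{\pm 1}$ and $T_s^{\pm 1}$ on the monomial prefactors and using \eqref{eq:Pmumuprel} to rewrite $P_{\mu ,\mu _0 }(x/q,s)$, $P_{\mu ,\mu _0 }(qx,s)$, $P_{\mu ,\mu _0 }(x,s/q)$, $P_{\mu ,\mu _0 }(x,qs)$ through $P_{\mu ,\mu _0 }(x,s)$, and dividing both sides by $x^{-\alpha } s^{\chi +1 - \tilde \alpha } P_{\mu ,\mu _0 } (x,s)$, I would reduce the asserted identity to the scalar equation
\begin{gather*}
 \big( q^{\alpha } a_0 (x) - q^{\nu + \chi +1 - \tilde \alpha }\tilde c_0 (s) \big) \frac{x-q^{\mu _0 } s}{x-q^{\mu }s}
 + \big( q^{- \alpha } c_0 (x) - q^{\nu - \chi -1 + \tilde \alpha } \tilde a_0 (s) \big) \frac{x-q^{\mu -1 }s}{x-q^{\mu _0 -1} s} \\
 \qquad {} + b_0 (x) - q^{\nu } \tilde b_0 (s) = 0 ,
\end{gather*}
the exact analogue of \eqref{eq:A4KerIdequiv}.

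As in the proof of Theorem~\ref{thm:A4KerId}, the crux is that, under the relations \eqref{eq:A3KerIdp} (notably $h_i = \tilde l_i + \mu _0 + \chi $, $l_i = \tilde h_i + \mu _0$, $\mu = \mu _0 + \chi + 1$, $\nu = 2\mu _0 + \alpha - \tilde \alpha + \chi$), the two parenthesised differences factor so as to cancel the denominators. Expanding the cubics and recombining, one finds
\begin{equation*}
 q^{\alpha } a_0 (x) - q^{\nu + \chi +1 - \tilde \alpha }\tilde c_0 (s) = q^{\alpha }\big( x - q^{\mu } s\big)\big( x + q^{\mu } s - e_1 + q^{-\mu } e_3 x^{-1} s^{-1}\big) ,
\end{equation*}
\begin{equation*}
 q^{- \alpha } c_0 (x) - q^{\nu - \chi -1 + \tilde \alpha } \tilde a_0 (s) = q^{\alpha +1}\big( x - q^{\mu _0 -1} s\big)\big( x + q^{\mu _0 -1} s - e_1' + q^{1-\mu _0 } e_3' x^{-1} s^{-1}\big) ,
\end{equation*}
where $e_1, e_3$ (respectively $e_1', e_3'$) are the first and third elementary symmetric functions of $q^{h_1+1/2}t_1, q^{h_2+1/2}t_2, q^{h_3+1/2}t_3$ (respectively of $q^{l_1-1/2}t_1, q^{l_2-1/2}t_2, q^{l_3-1/2}t_3$). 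Both are routine: the $x^{\pm1}$- and $s^{\pm1}$-terms of each difference reorganise into a common multiple of $x - q^{\mu }s$, respectively of $x - q^{\mu _0 -1}s$.

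Substituting these factorisations and cancelling $x - q^{\mu }s$ and $x - q^{\mu _0 -1}s$ against the denominators, the left-hand side of the scalar equation becomes a genuine Laurent polynomial in $x$ and $s$ supported on the monomials $x^2, xs, s^2, x, s, x^{-1}, s^{-1}$ (with no constant term), the $x$-powers coming from the $a_0, c_0$ contributions together with $b_0(x)$, and the $s$-powers from $\tilde c_0, \tilde a_0$ together with $q^{\nu }\tilde b_0(s)$. It then remains to verify that each coefficient vanishes by virtue of \eqref{eq:A3KerIdp}: the $x^2$-coefficient equals $q^{\alpha } + q^{\alpha +1} - q^{\alpha +1/2}\big(q^{1/2}+q^{-1/2}\big) = 0$ and the $xs$-coefficient equals $q^{\alpha }\big(q^{\mu } - q^{\mu _0 }\big) + q^{\alpha +1}\big(q^{\mu _0 -1} - q^{\mu -1}\big) = 0$ with no relation needed; the $s^2$-coefficient collapses using $\nu + \tilde \alpha = 2\mu _0 + \alpha + \chi$ and $\mu = \mu _0 + \chi + 1$; the $x$- and $s$-coefficients collapse using $h_i = \tilde l_i + \mu _0 + \chi$ and $l_i = \tilde h_i + \mu _0$ together with the explicit shape of $b_0$; and the $x^{-1}$- and $s^{-1}$-coefficients, which carry the $t_1 t_2 t_3$-factors, collapse once one invokes in addition $\beta = - \tilde \beta - \chi$. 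I expect no genuine obstacle here: the only structural input is the pair of factorisations, the same mechanism that already appears in Theorem~\ref{thm:A4KerId}, and the rest is a longer but mechanical coefficient comparison — heavier than in the degree-two case only because the residual factors $x + q^{\mu }s - e_1 + q^{-\mu }e_3 x^{-1}s^{-1}$ now involve genuine $x$- and $s$-terms rather than degenerating as they do in Theorem~\ref{thm:A4KerId}.
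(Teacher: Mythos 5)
Your proposal is correct and takes essentially the same route as the paper, which proves Theorem \ref{thm:A3KerId} only by remarking that it "is shown similarly to Theorem \ref{thm:A4KerId}": you reduce the identity via \eqref{eq:Pmumuprel} to the scalar analogue of \eqref{eq:A4KerIdequiv}, and your two factorisations (difference of $u^{-1}\prod_{n}(u-A_n)$ at $u=x$ and $u=q^{\mu}s$, resp.\ $u=q^{\mu_0-1}s$) and the subsequent coefficient checks are accurate under \eqref{eq:A3KerIdp}.
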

Theorem \ref{thm:A3KerId} is shown similarly to Theorem \ref{thm:A4KerId}.
Note that equation \eqref{eq:A3KerIdp} is equivalent to
\begin{gather*}
 \chi = ( h_1 + h_2 + h_3 - l_1 - l_2 - l_3 - \beta )/2 , \qquad \nu = 2 \mu _0 + \alpha - \tilde{\alpha } + \chi , \qquad \mu = \mu _0 + \chi +1, \\
 \tilde{\beta }= - \beta - \chi , \qquad \tilde{h}_i = l_i -\mu _0 , \qquad \tilde{l}_i = h_i -\mu _0 -\chi , \qquad i=1,2,3. \nonumber
\end{gather*}

We also obtain a kernel function and its identity for the operator $ A^{\langle 2 \rangle} ( x; h_1, h_2, h_3, h_4, l_1, l_2, l_3,\allowbreak l_4 ,\alpha ) $ as follows.
\begin{Theorem} \label{thm:A2KerId}
If the parameters satisfy
\begin{gather}
\chi = \big( \tilde{h}_1 + \tilde{h}_2 + \tilde{h}_3 + \tilde{h}_4 - \tilde{l}_1 - \tilde{l}_2 - \tilde{l}_3 - \tilde{l}_4 \big)/2 , \qquad \nu = 2 \mu _0 + \alpha - \tilde{\alpha } + \chi , \nonumber \\
 \mu = \mu _0 + \chi +1,\qquad l_i = \tilde{h}_i +\mu _0 , \qquad h_i = \tilde{l}_i +\mu _0 +\chi , \qquad i=1,2,3,4 , \label{eq:A2KerIdp}
\end{gather}
then we have
\begin{gather*}
 A^{\langle 2 \rangle} ( x; h_1, h_2, h_3, h_4, l_1, l_2, l_3, l_4 ,\alpha ) x^{-\alpha } s^{\chi +1 - \tilde{\alpha }} P_{\mu ,\mu _0 } (x,s) \\
 \qquad= q^{\nu } A^{\langle 2 \rangle} \alpha\big( s; \tilde{h}_1, \tilde{h}_2, \tilde{h}_3, \tilde{h}_4, \tilde{l}_1, \tilde{l}_2, \tilde{l}_3, \tilde{l}_4 , \tilde{\alpha } \alpha\big) x^{-\alpha } s^{\chi +1 - \tilde{\alpha }} P_{\mu ,\mu _0 } (x,s). \nonumber
\end{gather*}
\end{Theorem}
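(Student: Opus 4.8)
\emph{Proof plan.} The plan is to run the argument of the proof of Theorem~\ref{thm:A4KerId} in the present, slightly larger, setting. Reading off the coefficients from \eqref{eq:qsecondal}, write
\begin{equation*}
A^{\langle 2\rangle}(x;h_1,\dots,h_4,l_1,\dots,l_4,\alpha) = a(x)\,T_x^{-1} + q^{2\alpha+1}c(x)\,T_x + b(x),
\end{equation*}
where $a(x)=x^{-2}\prod_{n=1}^4\big(x-q^{h_n+1/2}t_n\big)$, $c(x)=x^{-2}\prod_{n=1}^4\big(x-q^{l_n-1/2}t_n\big)$ and $b(x)$ is the multiplication term in \eqref{eq:qsecondal}; let $\tilde a(s)$, $\tilde c(s)$, $\tilde b(s)$ be built in the same way from $\tilde h_n$, $\tilde l_n$, $\tilde\alpha$. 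Set $\Phi=\Phi(x,s)=x^{-\alpha}s^{\chi+1-\tilde\alpha}P_{\mu,\mu_0}(x,s)$. Using the shift relations \eqref{eq:Pmumuprel} one gets $T_x^{-1}\Phi = q^{\alpha}\frac{x-q^{\mu_0}s}{x-q^{\mu}s}\Phi$, $T_x\Phi=q^{-\alpha}\frac{x-q^{\mu-1}s}{x-q^{\mu_0-1}s}\Phi$, $T_s^{-1}\Phi=q^{\tilde\alpha-\chi-1}\frac{x-q^{\mu-1}s}{x-q^{\mu_0-1}s}\Phi$ and $T_s\Phi=q^{\chi+1-\tilde\alpha}\frac{x-q^{\mu_0}s}{x-q^{\mu}s}\Phi$. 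Hence, dividing by the (generically nonzero) factor $\Phi$, the asserted identity is equivalent to the rational identity
\begin{gather*}
\bigl(q^{\alpha}a(x)-q^{\nu+\tilde\alpha+\chi+2}\tilde c(s)\bigr)\frac{x-q^{\mu_0}s}{x-q^{\mu}s}
+\bigl(q^{\alpha+1}c(x)-q^{\nu+\tilde\alpha-\chi-1}\tilde a(s)\bigr)\frac{x-q^{\mu-1}s}{x-q^{\mu_0-1}s}
+ b(x)-q^{\nu}\tilde b(s)=0,
\end{gather*}
which is the analogue of equation \eqref{eq:A4KerIdequiv}.

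Next I would check, exactly as in the proof of Theorem~\ref{thm:A4KerId}, that the parameter relations \eqref{eq:A2KerIdp} make the two ``shift'' brackets divisible by the denominators standing under them. Using $h_i=\tilde l_i+\mu_0+\chi$, $\mu=\mu_0+\chi+1$ and $\nu=2\mu_0+\alpha-\tilde\alpha+\chi$, the substitution $x=q^{\mu}s$ sends $q^{\nu+\tilde\alpha+\chi+2}\tilde c(s)$ to $q^{\alpha}a(x)$, so $q^{\alpha}a(x)-q^{\nu+\tilde\alpha+\chi+2}\tilde c(s)$ is divisible by $x-q^{\mu}s$; likewise, using $l_i=\tilde h_i+\mu_0$, the substitution $x=q^{\mu_0-1}s$ shows that $q^{\alpha+1}c(x)-q^{\nu+\tilde\alpha-\chi-1}\tilde a(s)$ is divisible by $x-q^{\mu_0-1}s$. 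Performing these two cancellations turns both shift contributions into Laurent polynomials in $x$ and $s$, so the displayed identity becomes a Laurent polynomial identity.

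Finally I would verify this Laurent polynomial identity by comparing the coefficient of each monomial. After the cancellations the only mixed monomials occurring are $xs$ and $(xs)^{-1}$, and these cancel between the two shift contributions: the $xs$-coefficient cancels identically, while the $(xs)^{-1}$-coefficient cancels precisely because $\tilde h_1+\tilde h_2+\tilde h_3+\tilde h_4-\tilde l_1-\tilde l_2-\tilde l_3-\tilde l_4=2\chi$. The remaining monomials are the pure powers $x^{\pm1},x^{\pm2}$ and $s^{\pm1},s^{\pm2}$; the pure-$x$ part of the two shift contributions has to reproduce $-b(x)$ and the pure-$s$ part has to reproduce $q^{\nu}\tilde b(s)$, and the relations $l_i=\tilde h_i+\mu_0$, $h_i=\tilde l_i+\mu_0+\chi$ together with $\nu=2\mu_0+\alpha-\tilde\alpha+\chi$ are exactly what is needed for these coefficients to match. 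I expect the main obstacle to be the sheer bookkeeping: since $A^{\langle 2\rangle}$ has degree-four coefficient polynomials and a more elaborate multiplication term $b(x)$ containing $x^{\pm2}$ as well as $x^{\pm1}$, there are noticeably more monomials to track than in Theorems~\ref{thm:A4KerId} and~\ref{thm:A3KerId}, although each individual coefficient comparison reduces to a short identity among powers of $q$.
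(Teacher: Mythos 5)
Your proposal is correct and follows essentially the same route as the paper, which proves Theorem \ref{thm:A2KerId} "similarly to Theorem \ref{thm:A4KerId}": reduce the identity via the shift relations \eqref{eq:Pmumuprel} to a rational identity analogous to \eqref{eq:A4KerIdequiv}, use the parameter relations \eqref{eq:A2KerIdp} to factor out $x-q^{\mu}s$ and $x-q^{\mu_0-1}s$ (indeed $q^{\alpha}a(q^{\mu}s)=q^{\nu+\tilde\alpha+\chi+2}\tilde c(s)$ and $q^{\alpha+1}c(q^{\mu_0-1}s)=q^{\nu+\tilde\alpha-\chi-1}\tilde a(s)$), and match the remaining Laurent monomials, where the $xs$ and $(xs)^{-1}$ cancellations and the matching with $b(x)$ and $q^{\nu}\tilde b(s)$ work out exactly as you describe.
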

Theorem \ref{thm:A2KerId} is also shown similarly to Theorem \ref{thm:A4KerId}.
Note that equation \eqref{eq:A2KerIdp} is equivalent~to
\begin{gather*}
 \chi = ( h_1 + h_2 + h_3 + h_4 - l_1 - l_2 - l_3 - l_4 )/2 , \qquad \nu = 2 \mu _0 + \alpha - \tilde{\alpha } + \chi , \\
 \mu = \mu _0 + \chi +1,\qquad \tilde{h}_i = l_i -\mu _0 , \qquad \tilde{l}_i = h_i -\mu _0 -\chi , \qquad i=1,2,3,4 .
\end{gather*}

\section[Kernel function and q-integral transformation]{Kernel function and $\boldsymbol{q}$-integral transformation} \label{sec:qinttrans}

In this section, we obtain $q$-integral transformations by using the kernel function.

Let $\xi \in {\mathbb C} \setminus \{ 0 \}$.
The infinite sum defined by integral
\begin{equation*}
 \int^{\xi \infty }_{0}f(s) {\rm d}_{q}s = (1-q)\sum^{\infty}_{n=-\infty}q^{n} \xi f(q^n \xi )
\end{equation*}
is called the Jackson integral.
It is known that the usual integral over $(0 ,+ \infty )$ is recovered as~${q \to 1}$ (see, e.g., \cite{GR}).
\begin{Theorem} \label{thm:KFqint}
Assume that the function $\Phi (x,s)$ satisfies
\begin{gather}
 \bigl\{ a(x) T_{x}^{-1} + b(x) +c(x) T_{x} - \big( \tilde{a}(s) T_{s}^{-1} + \tilde{b}(s) + \tilde{c}(s) T_{s} \big) \bigr\} \Phi (x,s)= 0 . \label{eq:Phixs}
\end{gather}
If the function $h(s)$ satisfies
\begin{gather}
\big\{ q \tilde{a}(qs) T_{s} + \tilde{b}(s) + q^{-1} \tilde{c}(s/q) T_{s}^{-1} \big\} h(s) =0 , \label{eq:hseq}
\end{gather}
the Jackson integral
\begin{gather}
 g (x) := \int^{\xi \infty}_{0} h(s) \Phi (x, s) {\rm d}_{q}s \label{eq:gxJint}
\end{gather}
and $g\big(q^{\pm 1}x\big)$ converge, the limits
\begin{align}
& g_1 (x) := \lim _{L \to +\infty } q s \tilde{a}(q s) h(q s) \Phi (x,s) - s \tilde{c}(s) h(s) \Phi (x,qs) | _{s=q^{L} \xi } , \nonumber \\
& g_2 (x) := \lim _{K \to -\infty } q s \tilde{a}(q s) h(q s) \Phi (x,s) - s \tilde{c}(s) h(s) \Phi (x,qs) | _{s=q^{K-1} \xi } \label{eq:g1g2limLK}
\end{align}
converge, and the variable $\xi $ is independent of the variable $x$ or it is proportional to $x$ $($i.e., $\xi =Ax$ where $A $ is independent of $x)$, then the function $g(x)$ in equation \eqref{eq:gxJint} satisfies
\begin{equation*}
 \big\{ a(x) T_{x}^{-1} + b(x) +c(x) T_{x} \big\} g(x) = (1-q) ( g_2 (x) -g_1 (x)) .
\end{equation*}
\end{Theorem}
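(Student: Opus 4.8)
The plan is to apply the $q$-difference operator $a(x)T_x^{-1}+b(x)+c(x)T_x$ directly to the Jackson integral \eqref{eq:gxJint}, move the operator inside the sum term by term, use the kernel function identity \eqref{eq:Phixs} to trade the $x$-shifts for $s$-shifts, and then recognize the resulting $q$-integrand as a total $q$-difference so that the Jackson sum telescopes to the boundary contributions $g_1(x)$, $g_2(x)$ of \eqref{eq:g1g2limLK}.

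First I would record the elementary fact that $\int^{\xi\infty}_0 f(s)\,{\rm d}_qs=(1-q)\sum_{n\in{\mathbb Z}}q^n\xi\,f(q^n\xi)$ is invariant under $\xi\mapsto q\xi$, being merely a reindexing $n\mapsto n+1$ of the bilateral sum. Consequently, in both admissible cases --- $\xi$ independent of $x$, or $\xi=Ax$ --- one has $g\big(q^{\pm1}x\big)=\int^{\xi\infty}_0 h(s)\,\Phi\big(q^{\pm1}x,s\big)\,{\rm d}_qs$, computed over the \emph{same} lattice $\{q^n\xi\}_{n\in{\mathbb Z}}$ that computes $g(x)$ (in the case $\xi=Ax$ this uses the invariance just noted, applied to $\xi'=qAx$ resp.\ $\xi'=Ax/q$). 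Since $g(x)$ and $g\big(q^{\pm1}x\big)$ converge by hypothesis, I may combine the three bilateral sums termwise to obtain
\[
\big\{a(x)T_x^{-1}+b(x)+c(x)T_x\big\}g(x)=\int^{\xi\infty}_0 h(s)\big\{a(x)\Phi(x/q,s)+b(x)\Phi(x,s)+c(x)\Phi(qx,s)\big\}{\rm d}_qs .
\]
By \eqref{eq:Phixs}, evaluated at each lattice point $s=q^n\xi$, the braced factor equals $\tilde a(s)\Phi(x,s/q)+\tilde b(s)\Phi(x,s)+\tilde c(s)\Phi(x,qs)$, so the right-hand side becomes $\int^{\xi\infty}_0 F(s)\,{\rm d}_qs$ with $F(s):=h(s)\big\{\tilde a(s)\Phi(x,s/q)+\tilde b(s)\Phi(x,s)+\tilde c(s)\Phi(x,qs)\big\}$.

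Next I would carry out a $q$-analogue of integration by parts. Writing $B(s):=q\,s\,\tilde a(qs)\,h(qs)\,\Phi(x,s)-s\,\tilde c(s)\,h(s)\,\Phi(x,qs)$ for the boundary function of \eqref{eq:g1g2limLK}, so that $g_1(x)=\lim_{L\to+\infty}B(s)\big|_{s=q^L\xi}$ and $g_2(x)=\lim_{K\to-\infty}B(s)\big|_{s=q^{K-1}\xi}$, a short computation of $B(s/q)-B(s)$ produces the terms $s\,\tilde a(s)h(s)\Phi(x,s/q)$ and $s\,\tilde c(s)h(s)\Phi(x,qs)$, together with a coefficient of $\Phi(x,s)$ equal to $-q\,s\,\tilde a(qs)h(qs)-q^{-1}s\,\tilde c(s/q)h(s/q)$; multiplying equation \eqref{eq:hseq} by $s$ turns this last expression into $s\,\tilde b(s)h(s)$, whence $B(s/q)-B(s)=s\,F(s)$. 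Therefore $(1-q)\,q^n\xi\,F(q^n\xi)=B(q^{n-1}\xi)-B(q^n\xi)$, and the partial sum of the Jackson integral over $K\le n\le L$ telescopes exactly to $B(q^{K-1}\xi)-B(q^L\xi)$. Letting $K\to-\infty$, $L\to+\infty$ and invoking the assumed convergence of $g_1(x)$ and $g_2(x)$ (and of $g(x)$, which ensures the bilateral sum converges) yields $\int^{\xi\infty}_0 F(s)\,{\rm d}_qs=(1-q)\big(g_2(x)-g_1(x)\big)$, which is the asserted identity.

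The substance is entirely in the one-line computation $B(s/q)-B(s)=sF(s)$, whose only nontrivial input is precisely the adjoint difference equation \eqref{eq:hseq} satisfied by $h$; indeed this is how the correct form of \eqref{eq:hseq} and of the boundary terms $g_1,g_2$ is reverse-engineered. The remaining points are bookkeeping: justifying the interchange of the $x$-shift operators with the infinite Jackson sum (immediate once $g(x)$ and $g\big(q^{\pm1}x\big)$ converge), the reindexing needed to align the lattices when $\xi\propto x$, and the convergence of the bilateral telescoping sum, for which one needs both one-sided limits $g_1$ and $g_2$ to exist separately --- exactly the hypotheses imposed. I expect the alignment of lattices in the $\xi\propto x$ case to be the most error-prone step to state carefully, although it is not deep.
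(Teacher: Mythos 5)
Your proof is correct, and its heart --- the summation-by-parts identity $B(s/q)-B(s)=sF(s)$, which uses \eqref{eq:hseq} to turn the Jackson sum into a telescoping difference whose two endpoint limits are exactly \eqref{eq:g1g2limLK} --- is the same mechanism the paper uses, where it appears in expanded form as the index-shifting computation behind Proposition \ref{prop:gKL1}. Where you genuinely differ is the treatment of the case $\xi=Ax$. The paper applies the operator directly to finite partial sums built on the $x$-dependent lattice $\{q^{n}Ax\}$, which produces, besides the $\tilde a,\tilde c$ boundary terms, additional boundary terms involving $a(x)$ and $c(x)$ (see equation \eqref{eq:gKL2}); these are then killed in the limit by noting that convergence of the Jackson integrals $g\big(q^{\pm1}x\big)$ forces the individual terms $s\Phi\big(q^{\pm1}x,s\big)h(s)|_{s=q^{n}Ax}$ to tend to $0$ as $n\to\pm\infty$. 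You instead exploit the invariance of the bilateral Jackson sum under $\xi\mapsto q\xi$ to realign $g\big(q^{\pm1}x\big)$ onto the same lattice as $g(x)$ \emph{before} invoking \eqref{eq:Phixs}, so the extra $a(x),c(x)$ boundary terms never arise and the two cases ($\xi$ independent of $x$ and $\xi$ proportional to $x$) are handled uniformly; the hypothesis on $g\big(q^{\pm1}x\big)$ is then used only to justify combining the three convergent bilateral sums termwise. Your organization is cleaner and eliminates the case distinction; the paper's finite-sum formulation has the minor advantage of yielding an exact identity for the truncations $g^{[K,L]}$ before any limits are taken, which records the boundary contributions explicitly even when the limits in \eqref{eq:g1g2limLK} are not assumed to exist.
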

To obtain the theorem, we use the following proposition.
\begin{Proposition} \label{prop:gKL1}
Assume that the function $\Phi (x,s)$ satisfies equation \eqref{eq:Phixs} and the function~$h(s)$ satisfies equation \eqref{eq:hseq}.
Set
\begin{gather*}
 g ^{[K,L]}(x) = (1-q)\sum ^{L}_{n=K} s h(s) \Phi (x, s) | _{s=q^{n} \xi } .
\end{gather*}
\begin{itemize}\itemsep=0pt
\item[$(i)$] If the variable $\xi $ is independent of the variable $x$, then we have
\begin{gather}
 \big\{ a(x) T_{x}^{-1} + b(x) +c(x) T_{x} \big\} g ^{[K,L]}(x) \nonumber \\
\qquad = - (1-q) \big[ q s \tilde{a}(q s) h(q s) \Phi (x,s) - s \tilde{c}(s) h(s) \Phi (x,qs) \big] \big| _{s=q^{L} \xi } \nonumber \\
\phantom{ \qquad=}{} + (1-q) \big[ q s \tilde{a}(q s) h(q s) \Phi (x,s) - s \tilde{c}(s) h(s) \Phi (x,qs) \big] \big| _{s=q^{K-1} \xi } . \label{eq:gKL1}
\end{gather}
\item[$(ii)$] If $\xi =A x $ and $A$ is independent of $x$, then we have
\begin{gather}
 \big\{ a(x) T_{x}^{-1} + b(x) +c(x) T_{x} \big\} g ^{[K,L]}(x) \nonumber \\
 \qquad= - (1-q) [ s a(x) h(s) \Phi (x/q, s ) - q s c(x) h(q s ) \Phi (qx,q s ) ] _{s=q^{L} A x } \nonumber \\
\phantom{ \qquad=}{} + (1-q) [ s a(x) h(s) \Phi (x/q, s ) - q s c(x) h (q s) \Phi (qx, q s ) ] _{s=q^{K-1} A x } \nonumber \\
 \phantom{ \qquad=}{} - (1-q) [ q s \tilde{a}(q s) h(q s) \Phi (x,s) - s \tilde{c}(s) h(s) \Phi (x,qs) ] | _{s=q^{L} A x } \nonumber \\
 \phantom{ \qquad=}{} + (1-q) [ q s \tilde{a}(q s) h(q s) \Phi (x,s) - s \tilde{c}(s) h(s) \Phi (x,qs) ] | _{s=q^{K-1} A x } . \label{eq:gKL2}
\end{gather}
\end{itemize}
\end{Proposition}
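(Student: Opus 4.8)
The plan is to establish part~$(i)$ by a direct telescoping computation and then to deduce part~$(ii)$ from it by a reindexing of the summation range. For part~$(i)$, since $\xi$ does not depend on $x$, the operator $a(x)T_x^{-1}+b(x)+c(x)T_x$ commutes with the finite sum and acts only on the first argument of $\Phi$, so that
\begin{equation*}
\bigl\{ a(x)T_x^{-1}+b(x)+c(x)T_x \bigr\} g^{[K,L]}(x) = (1-q)\sum_{n=K}^{L} s\, h(s)\, \bigl\{ a(x)T_x^{-1}+b(x)+c(x)T_x \bigr\} \Phi(x,s) \big|_{s=q^{n}\xi} .
\end{equation*}
By the kernel function identity \eqref{eq:Phixs}, the inner operator can be replaced by $\tilde a(s)T_s^{-1}+\tilde b(s)+\tilde c(s)T_s$, which now acts on $s$, so each summand equals $s h(s)\bigl[\tilde a(s)\Phi(x,s/q)+\tilde b(s)\Phi(x,s)+\tilde c(s)\Phi(x,qs)\bigr]$ at $s=q^{n}\xi$.

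The next step is to use equation \eqref{eq:hseq} in the form $\tilde b(s)h(s) = -q\tilde a(qs)h(qs) - q^{-1}\tilde c(s/q)h(s/q)$ to eliminate the middle term. Setting $U(s):= s\tilde a(s)h(s)\Phi(x,s/q)$ and $V(s):= s\tilde c(s)h(s)\Phi(x,qs)$, the shifts $s\mapsto qs$ in the $\tilde a$-contribution and $s\mapsto s/q$ in the $\tilde c$-contribution give $s\tilde b(s)h(s)\Phi(x,s) = -U(qs)-V(s/q)$, so the summand at $s=q^{n}\xi$ becomes $\bigl[U(q^{n}\xi)-U(q^{n+1}\xi)\bigr]+\bigl[V(q^{n}\xi)-V(q^{n-1}\xi)\bigr]$. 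Summing over $n=K,\dots,L$ makes both brackets telescope, leaving $U(q^{K}\xi)-U(q^{L+1}\xi)+V(q^{L}\xi)-V(q^{K-1}\xi)$; since $U(q^{n+1}\xi)=qs\tilde a(qs)h(qs)\Phi(x,s)\big|_{s=q^{n}\xi}$ and $V(q^{n}\xi)=s\tilde c(s)h(s)\Phi(x,qs)\big|_{s=q^{n}\xi}$, this is precisely the right-hand side of equation \eqref{eq:gKL1}.

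For part~$(ii)$, the variable $s=q^{n}\xi=q^{n}Ax$ also depends on $x$, so $T_x^{\pm1}$ acts on both arguments of $\Phi$. The key is the reindexing: substituting $x\mapsto x/q$ turns $q^{n}Ax$ into $q^{n-1}Ax$, hence $T_x^{-1}g^{[K,L]}(x)=(1-q)\sum_{n=K-1}^{L-1}s h(s)\Phi(x/q,s)\big|_{s=q^{n}Ax}$ with $\xi=Ax$ now held fixed, and likewise $T_x g^{[K,L]}(x)=(1-q)\sum_{n=K+1}^{L+1}s h(s)\Phi(qx,s)\big|_{s=q^{n}Ax}$, while $b(x)$ does not shift the range. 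Realigning the ranges $[K-1,L-1]$ and $[K+1,L+1]$ with $[K,L]$ splits the left-hand side of equation \eqref{eq:gKL2} into a main part $(1-q)\sum_{n=K}^{L}s h(s)\bigl\{ a(x)T_x^{-1}+b(x)+c(x)T_x \bigr\}\Phi(x,s)\big|_{s=q^{n}\xi}$, which by the computation in part~$(i)$ equals the last two lines of equation \eqref{eq:gKL2}, plus endpoint terms; after rewriting $c(x)s h(s)\Phi(qx,s)\big|_{s=q^{L+1}\xi}$ as $qs c(x)h(qs)\Phi(qx,qs)\big|_{s=q^{L}\xi}$ and likewise near the lower end, these endpoint terms assemble into the first two lines of equation \eqref{eq:gKL2}.

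The $h$-equation substitution and the telescoping in part~$(i)$ are routine. The step needing care is the bookkeeping in part~$(ii)$: one must track exactly which terms are gained and lost when the shifted ranges $[K-1,L-1]$ and $[K+1,L+1]$ are aligned with $[K,L]$, and verify that the surviving endpoint terms recombine into the precise form of equation \eqref{eq:gKL2}, in particular that the $c(x)$-boundary contribution appears with the argument $\Phi(qx,qs)$ evaluated at $s=q^{L}Ax$ and $s=q^{K-1}Ax$ rather than at a shifted index.
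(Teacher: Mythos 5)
Your proposal is correct and follows essentially the same route as the paper: trade the $x$-shift operator for the $s$-shift operators via equation \eqref{eq:Phixs}, use equation \eqref{eq:hseq} to kill the bulk terms, and collect the telescoping boundary terms, with part~$(ii)$ reduced to part~$(i)$ after reindexing the shifted ranges and rewriting the $c(x)$-endpoints at $s=q^{L}Ax$ and $s=q^{K-1}Ax$. The only cosmetic difference is that you eliminate $\tilde b(s)h(s)$ before telescoping, while the paper shifts the summation indices first and then applies the $h$-equation; the bookkeeping and endpoint terms agree with equations \eqref{eq:gKL1} and \eqref{eq:gKL2}.
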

\begin{proof}
It follows from equation \eqref{eq:Phixs} that
\begin{align}
& [ a(x) \Phi (x/q ,s) + b(x) \Phi (x,s) +c(x) \Phi (qx,s) ]_{s = q^n \xi} \nonumber \\
& \qquad{}= [ \tilde{a}(s) \Phi (x,s/q) + \tilde{b}(s) \Phi (x,s) + \tilde{c}(s) \Phi (x,q s) ]_{s = q^n \xi } . \label{eq:axPhiasPhi}
\end{align}

(i) If $\xi $ is independent of $x$, then we have
\begin{gather*}
 \big\{ a(x) T_{x}^{-1} + b(x) +c(x) T_{x} \big\} \sum _{n=K}^L s \Phi (x,s) h(s) | _{s=q^n \xi } \\
\qquad = \sum _{n=K}^L \big[ s \tilde{a}(s) \Phi (x,s/q) h(s) + s \tilde{b}(s) \Phi (x,s) h(s) + s \tilde{c}(s) \Phi (x,qs) h(s) \big] | _{s=q^n \xi } \\
\qquad = \sum _{n=K-1}^{L-1} q s \tilde{a}(q s) \Phi (x,s) h(q s) | _{s=q^n \xi } + \sum _{n=K}^L s \tilde{b}(s) \Phi (x,s) h(s) | _{s=q^n \xi } \\
 \phantom{ \qquad=}{} + \sum _{n=K+1}^{L+1} q^{-1} s \tilde{c}(s/q) \Phi (x,s) h(s/q) | _{s=q^n \xi } \\
\qquad = \sum _{n=K}^L \big[ q \tilde{a}(q s) h(q s) + \tilde{b}(s) h(s) + q^{-1} \tilde{c}(s/q) h(s/q) \big] s \Phi (x,s) | _{s=q^n \xi } \\
 \phantom{ \qquad=}{} - q s \tilde{a}(q s) \Phi (x,s) h(q s) | _{s=q^L \xi } + q^{-1} s \tilde{c}(s/q) \Phi (x,s) h(s/q) | _{s=q^{L+1} \xi } \\
 \phantom{ \qquad=}{} + q s \tilde{a}(q s) \Phi (x,s) h(q s) | _{s=q^{K-1} \xi } - q^{-1} s \tilde{c}(s/q) \Phi (x,s) h(s/q) | _{s=q^{K} \xi } .
\end{gather*}
Therefore, we obtain (i) by equation \eqref{eq:hseq}.

(ii) If $\xi =A x$ and $A$ is independent of $x$, then it follows from equation \eqref{eq:axPhiasPhi} that
\begin{gather*}
 \big\{ a(x) T_{x}^{-1} + b(x) +c(x) T_{x} \big\} \sum _{n=K}^L s \Phi (x,s) h(s) | _{s=q^n A x } \\
\qquad = \sum _{n=K}^L \big\{ a(x) q^{n-1} A x \Phi \big(x/q ,q^{n-1} A x\big) h\big(q^{n-1} A x\big) + b(x) q^n A x \Phi \big(x,q^n A x\big) h\big(q^n A x\big) \\
 \phantom{ \qquad=}{} +c(x) q^{n+1} A x \Phi \big(qx,q^{n+1} A x\big) h\big(q^{n+1} A x\big) \big\} \\
\qquad = \sum _{n=K}^L s h(s) ( a(x) \Phi (x/q,s) + b(x) \Phi (x,s) + c(x) \Phi (qx,s)) | _{s=q^n A x } \\
 \phantom{ \qquad=}{} + a(x) q^{K-1} A x \Phi \big(x/q,q^{K-1} A x\big) h\big(q^{K-1} A x\big) - a(x) q^{L} A x \Phi \big(x/q,q^{L} A x\big) h\big(q^{L} A x\big) \\
 \phantom{ \qquad=}{} - c(x) q^{K} A x \Phi \big(qx,q^{K} A x\big) h \big(q^{K} A x\big) + c(x) q^{L+1} A x \Phi \big(qx,q^{L+1} A x\big) h\big(q^{L+1} A x\big) \\
\qquad = \sum _{n=K}^L \big[ s \tilde{a}(s) \Phi (x,s/q) h(s) + s \tilde{b}(s) \Phi (x,s) h(s) + s \tilde{c}(s) \Phi (x,qs) h(s) \big] \big| _{s=q^n A x } \\
 \phantom{ \qquad=}{} - [ s a(x) \Phi (x/q, s ) h(s) - q s c(x) \Phi (qx,q s ) h(q s ) ] _{s=q^{L} A x } \\
 \phantom{ \qquad=}{} + [ s a(x) \Phi (x/q, s ) h(s) - q s c(x) \Phi (qx, q s ) h (q s) ] _{s=q^{K-1} A x } ,
\end{gather*}
where we applied equation \eqref{eq:axPhiasPhi} in the case $\xi =A x $.
Hence we obtain (ii) by repeating the discussion in (i).
\end{proof}

We continue the proof of Theorem \ref{thm:KFqint}.
If the variable $\xi $ is independent of the variable $x$, then we obtain Theorem \ref{thm:KFqint} from equation \eqref{eq:gKL1} as $L \to +\infty $ and $K \to -\infty $.

We consider the case $\xi =A x $.
The convergence of the Jackson integrals $g\big(q^{\pm 1} x\big)$ is equivalent to the convergence of the summations
\begin{gather*}
\sum _{n=0}^{L } s \Phi \big( q^{\pm 1} x,s\big) h(s) | _{s=q^n A x } , \qquad \sum _{n=K}^{-1} s \Phi \big( q^{\pm 1} x,s\big) h(s) | _{s=q^n A x }
\end{gather*}
as $L \to +\infty $ and $K \to -\infty $, and it follows that
\begin{gather*}
 \lim _{L \to +\infty } s \Phi \big( q^{\pm 1} x,s\big) h(s) | _{s=q^n A x } = 0 = \lim _{K \to -\infty } s \Phi \big( q^{\pm 1} x,s\big) h(s) | _{s=q^n A x } .
\end{gather*}
Therefore, we have
\begin{gather*}
 \lim _{n \to \pm \infty } [ s a(x) \Phi (x/q, s ) h(s) - q s c(x) \Phi (qx,q s ) h(q s ) ] _{s=q^{n} A x } =0 ,
\end{gather*}
and we obtain Theorem \ref{thm:KFqint} for the case $\xi =A x $ from equation \eqref{eq:gKL2} as $L \to +\infty $ and $K \to -\infty $.

\section[q-integral transformations for q-Heun equations and its variants]{$\boldsymbol{q}$-integral transformations for $\boldsymbol{q}$-Heun equations\\ and its variants} \label{sec:qintHeunvar}

In this section, we obtain $q$-integral transformations for the $q$-Heun equation and its variants by applying results in Sections~\ref{sec:Kerfunc} and~\ref{sec:qinttrans}.


Recall that the operator $A^{\langle 4 \rangle} ( x; h_1, h_2, l_1, l_2, \alpha _1, \alpha _2, \beta ) $ was introduced in equation \eqref{eq:A4op}, and the $q$-Heun equation was written as the equation for the eigenfunction of it with the eigenvalue~$E$.
In Theorem \ref{thm:A4KerId}, we obtained the identity
\begin{align}
& A^{\langle 4 \rangle} ( x; h_1, h_2, l_1, l_2, \alpha _1, \alpha _2, \beta ) x^{- \alpha _1} s^{1+\chi - \tilde{\alpha }_1} P_{\mu ,\mu _0 } (x,s) \nonumber \\
&\qquad = q^{\nu } A^{\langle 4 \rangle} \big( s; \tilde{h}_1, \tilde{h}_2, \tilde{l}_1, \tilde{l}_2, \tilde{\alpha }_1, \tilde{\alpha }_2, \tilde{\beta } \big) x^{- \alpha _1} s^{1+\chi - \tilde{\alpha }_1} P_{\mu ,\mu _0 } (x,s) , \label{eq:A4KerId1}
\end{align}
where the parameters satisfy equation \eqref{eq:A4KerIdp}.

We now apply Theorem \ref{thm:KFqint} for the $q$-Heun equation.
Let $E $ and $ \tilde{E} $ be constants such that~${E = q^{\nu } \tilde{E}}$ and write
\begin{align*}
& a(x) T_{x}^{-1} + b(x) +c(x) T_{x}= A^{\langle 4 \rangle} ( x; h_1, h_2, l_1, l_2, \alpha _1, \alpha _2, \beta ) -E , \\
& \tilde{a}(s) T_{s}^{-1} + \tilde{b}(s) + \tilde{c}(s) T_{s} = q^{\nu } \big\{ A^{\langle 4 \rangle} \big( s; \tilde{h}_1, \tilde{h}_2, \tilde{l}_1, \tilde{l}_2, \tilde{\alpha }_1 , \tilde{\alpha }_2, \tilde{\beta } \big) - \tilde{E} \big\} . \nonumber
\end{align*}
Equation \eqref{eq:A4KerId1} gives a realization of equation \eqref{eq:Phixs} by setting $ \Phi (x,s) = x^{- \alpha _1} s^{1+\chi - \tilde{\alpha }_1} P_{\mu ,\mu _0 } (x,s)$.
In this situation, the coefficients of the equation
\begin{gather}
\big\{ q \tilde{a}(qs) T_{s} + \tilde{b}(s) + q^{-1} \tilde{c}(s/q) T_{s}^{-1}\big\} h(s) = 0 \label{eq:A4hs}
\end{gather}
is described as
\begin{align}
& q\tilde{a}(qs)=q^{\nu + 2} s^{-1} \big(s-q^{\tilde{h}_1 - 1/2} t_1\big) \big(s-q^{\tilde{h}_2 -1/2} t_2\big) , \nonumber \\
& q^{-1} \tilde{c}(s/q) = q^{\nu + \tilde{\alpha } _1 +\tilde{\alpha }_2-2} s^{-1} \big(s - q^{\tilde{l}_1 +1/2} t_1\big) \big(s - q^{\tilde{l}_2 + 1/2} t_2\big) , \nonumber \\
& \tilde{b}(s) = -q^{\nu } \big\{ \big(q^{\tilde{\alpha } _1} +q^{\tilde{\alpha } _2} \big) s + q^{(\tilde{h}_1 +\tilde{h}_2 + \tilde{l}_1 +\tilde{l}_2 +\tilde{\alpha } _1 +\tilde{\alpha } _2 )/2} \big( q^{\tilde{\beta }/2} + q^{-\tilde{\beta }/2} \big) t_1 t_2 s^{-1} \big\} - q^{\nu } \tilde{E} . \label{eq:A4hscond}
\end{align}
Equation \eqref{eq:A4hs} is equivalent to the equation $A^{\langle 4 \rangle} \big( x; h'_1, h'_2, l'_1, l'_2, \alpha '_1, \alpha '_2, \beta ' \big) h(s) = E' h(s) $, where
\begin{align}
 \alpha ' _i = 2 - \tilde{\alpha } _i, \qquad l'_i =\tilde{h}_i , \qquad h'_i = \tilde{l}_i , \qquad i=1,2 , \qquad \beta ' = \tilde{\beta }, \qquad E' = q^{2 - \tilde{\alpha } _1 - \tilde{\alpha } _2} \tilde{E } . \label{eq:A4til'}
\end{align}
Note that the exponents of equation \eqref{eq:A4hs} about $s=\infty $ are $\{ \alpha ' _1 , \alpha ' _2 \}$ and the exponents about~${s=0}$ are $\{ \lambda ' _+, \lambda '_- \}$, where $\lambda '_{\pm } = \big( h'_1 +h'_2 -l'_1-l'_2 -\alpha '_1-\alpha '_2 \pm \beta '+2\big)/2$.
See \cite{TakqH} for the definition and details of the exponents for the $q$-difference equation.
In particular, if $\alpha '_1 - \alpha ' _2 \not \in {\mathbb Z} $ (resp.\ $\beta ' \not \in {\mathbb Z} $), then there exist the solutions $h_1 (s) $ and $h_2 (s)$ (resp.\ $h_3 (s) $ and $h_4 (s)$) of equation \eqref{eq:A4hs} about $s=\infty $ (resp.\ $s=0$) such that $h_1 (s) / s^{-\alpha '_1} \to 1$ and $h_2 (s) / s^{-\alpha '_2} \to 1$ as $s \to \infty $ \big(resp.\ $h_3 (s) / s^{\lambda ' _+} \to 1$ and $h_4 (s) / s^{\lambda '_- } \to 1$ as $s \to 0 $\big).
The values $\lambda ' _+ \big( = \big(h'_1 + h'_2 - l'_1 - l'_2 - \alpha '_1 - \alpha '_2 + \beta ' +2\big)/2 \big)$ and $-\alpha '_1$ will appear in Theorem \ref{thm:A4}.
As a~consequence of Theorem \ref{thm:KFqint}, we obtain the following theorem.
\begin{Theorem} \label{thm:A4}
Assume that the function $h(s)$ satisfies
\begin{gather}
 A^{\langle 4 \rangle} \big( s; h'_1, h'_2, l'_1, l'_2, \alpha '_1, \alpha '_2, \beta ' \big) h(s) = E' h(s) \label{eq:A4hsE'}
\end{gather}
and
\begin{gather}
 \lim _{L \to +\infty } \frac{h(s)}{s^{(h'_1 + h'_2 - l'_1 - l'_2 - \alpha '_1 - \alpha '_2 + \beta ' +2)/2}} \big| _{s=q^{L} \xi } = C_1, \qquad \lim _{K \to -\infty } \frac{h(s)}{s^{-\alpha ' _1}} \big| _{s=q^{K} \xi } = C_2 \label{eq:A4hslim}
\end{gather}
for some constants $E'$, $C_1$, $C_2$.
Then the Jackson integral
\begin{gather}
 g (x) = x^{- \alpha _1} \int^{\xi \infty }_{0} s^{-(h'_1 + h'_2 - l'_1 - l'_2 - \alpha '_1 - \alpha '_2 + \beta ' +2)/2} h(s) P^{(1)}_{\mu ,\mu _0 } (x,s) {\rm d}_{q}s \label{eq:A4Jint}
\end{gather}
converges and it satisfies
\begin{gather}
 A^{\langle 4 \rangle} ( x; h_1, h_2, l_1, l_2, \alpha _1, \alpha _2, \beta ) g (x) = E g(x) + (1-q) (g_2 (x) - g_1 (x)), \label{eq:thmA4gEgg2g1}
\end{gather}
where
\begin{gather}
 g_1 (x)= C_1 x^{- \alpha _1} q^{\mu _0 + \alpha _1 + h'_1 + h'_2 + \chi } \big( q^{ \beta ' } - 1 \big) t_1 t_2 , \nonumber \\
 g_2(x)= C_2 x^{- \alpha _1 } \frac{\vartheta _q (q^{- \mu _0 - \chi } x/\xi )}{\vartheta _q \big(q^{- \mu _0 +1 } x/\xi \big)} \xi ^{\chi + 1 } q^{\mu _0 + \alpha _1 } \big( q^{\alpha ' _2 - \alpha '_1} - 1 \big) , \nonumber \\
 \chi = \big( l'_1 + l'_2 - h'_1 - h'_2 - \alpha '_1 + \alpha '_2 - \beta ' \big)/2 , \qquad \mu = \mu _0 + 1+ \chi , \nonumber \\
 E = q^{\mu _0 + \alpha _1 - \alpha ' _1 } E', \qquad \beta = - \beta ' - \chi , \qquad \alpha _2 = \alpha _1 - \alpha ' _1 + \alpha ' _2 - \chi , \nonumber \\
 l_i = l'_i +\mu _0 , \qquad h_i = h'_i +\mu _0 +\chi , \qquad i=1,2. \label{eq:thmA4param}
\end{gather}
In particular, if $C_1=C_2=0$, then we have
\begin{gather*}
 A^{\langle 4 \rangle} ( x; h_1, h_2, l_1, l_2, \alpha _1, \alpha _2, \beta ) g (x) = E g(x) .
\end{gather*}
\end{Theorem}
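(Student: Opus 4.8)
The plan is to derive Theorem~\ref{thm:A4} as a direct application of Theorem~\ref{thm:KFqint} together with the kernel function identity of Theorem~\ref{thm:A4KerId}. First I would set up the dictionary: take $\Phi(x,s)=x^{-\alpha_1}s^{1+\chi-\tilde\alpha_1}P^{(1)}_{\mu,\mu_0}(x,s)$, and define the operators $a(x)T_x^{-1}+b(x)+c(x)T_x = A^{\langle 4\rangle}(x;h_1,h_2,l_1,l_2,\alpha_1,\alpha_2,\beta)-E$ and $\tilde a(s)T_s^{-1}+\tilde b(s)+\tilde c(s)T_s = q^\nu\{A^{\langle 4\rangle}(s;\tilde h_1,\ldots,\tilde\beta)-\tilde E\}$, with $E=q^\nu\tilde E$. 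By Theorem~\ref{thm:A4KerId}, equation \eqref{eq:Phixs} holds. I then identify the relation between the tilde-parameters and the primed parameters via \eqref{eq:A4til'}: the equation $\{q\tilde a(qs)T_s+\tilde b(s)+q^{-1}\tilde c(s/q)T_s^{-1}\}h(s)=0$ of \eqref{eq:A4hs} is exactly \eqref{eq:A4hsE'}, i.e.\ $A^{\langle 4\rangle}(s;h_1',h_2',l_1',l_2',\alpha_1',\alpha_2',\beta')h(s)=E'h(s)$, so the hypothesis on $h$ in Theorem~\ref{thm:A4} is precisely \eqref{eq:hseq}. A bookkeeping computation then rewrites the parameter dictionary \eqref{eq:A4KerIdp} in terms of the primed parameters, yielding $\chi=(l_1'+l_2'-h_1'-h_2'-\alpha_1'+\alpha_2'-\beta')/2$, $\mu=\mu_0+1+\chi$, $E=q^{\mu_0+\alpha_1-\alpha_1'}E'$, etc., which is \eqref{eq:thmA4param}; here one uses $\tilde h_i=h_i'$ is \emph{not} the identification—rather $l_i'=\tilde h_i$, $h_i'=\tilde l_i$, $\alpha_i'=2-\tilde\alpha_i$—so care is needed translating $l_i=\tilde h_i+\mu_0$ into $l_i=l_i'+\mu_0$ and $h_i=\tilde l_i+\mu_0+\chi$ into $h_i=h_i'+\mu_0+\chi$.

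Next I would address convergence. The claim is that the conditions \eqref{eq:A4hslim} on the asymptotics of $h(s)$ at $s=0$ and $s=\infty$ force the Jackson integral \eqref{eq:A4Jint} and the shifted integrals $g(q^{\pm1}x)$ to converge, and make the boundary limits $g_1(x),g_2(x)$ of \eqref{eq:g1g2limLK} exist. For this I use \eqref{eq:P12lim1}: $P^{(1)}_{\mu,\mu_0}(x,s)\to1$ as $s\to0$, so the summand of the Jackson integral near $n\to-\infty$ behaves like $s^{1-(h_1'+h_2'-l_1'-l_2'-\alpha_1'-\alpha_2'+\beta'+2)/2}h(s)\sim s\cdot s^{-\alpha_1'}\cdot C_2\cdot s^{-(\cdots)}$, and the exponent arithmetic (using $\chi$ and the formula for $\lambda_-'$) shows this is a convergent geometric-type tail since $0<|q|<1$; similarly near $n\to+\infty$ one uses \eqref{eq:P2P1theta} and \eqref{eq:P12lim1} to convert $P^{(1)}$ to a theta-quotient times a power, with the power chosen (via the $C_1$ hypothesis exponent $(h_1'+h_2'-l_1'-l_2'-\alpha_1'-\alpha_2'+\beta'+2)/2$, which is $\lambda_+'$) so that the tail decays. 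The variable $\xi$ is independent of $x$, so the case $\xi=Ax$ of Theorem~\ref{thm:KFqint} is not needed.

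Then I would compute the boundary terms explicitly. From \eqref{eq:g1g2limLK}, $g_1(x)=\lim_{L\to+\infty}\{qs\tilde a(qs)h(qs)\Phi(x,s)-s\tilde c(s)h(s)\Phi(x,qs)\}|_{s=q^L\xi}$. Using \eqref{eq:A4hscond} for $q\tilde a(qs)$ and $\tilde c(s)$, the hypothesis $h(s)\sim C_1 s^{\lambda_+'}$, and $\Phi(x,s)=x^{-\alpha_1}s^{1+\chi-\tilde\alpha_1}P^{(1)}_{\mu,\mu_0}(x,s)$ with $P^{(1)}\to1$ at $s\to0$ (note $q^L\xi\to0$), the leading terms in $s$ survive: the $s^{-1}$ from $\tilde a,\tilde c$ against the $s^2$ from the two factors $(s-q^{\cdots}t_i)$ leaves the $t_1t_2$-term, and collecting powers of $q$ gives $g_1(x)=C_1 x^{-\alpha_1}q^{\mu_0+\alpha_1+h_1'+h_2'+\chi}(q^{\beta'}-1)t_1t_2$. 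Similarly $g_2(x)$ comes from the $s\to\infty$ limit ($q^{K-1}\xi\to\infty$), where $h(s)\sim C_2 s^{-\alpha_1'}$ and one must convert $P^{(1)}_{\mu,\mu_0}(x,q^{K-1}\xi)$ using the displayed limit $\lim_{K\to-\infty}s^{\mu-\mu_0}P^{(1)}_{\mu,\mu_0}(x,s)|_{s=q^{K-1}\xi}=\xi^{\mu-\mu_0}\vartheta_q(q^{-\mu+1}x/\xi)/\vartheta_q(q^{-\mu_0+1}x/\xi)$; here $\mu-\mu_0=1+\chi$, and using $-\mu+1=-\mu_0-\chi$ one recovers the theta-quotient $\vartheta_q(q^{-\mu_0-\chi}x/\xi)/\vartheta_q(q^{-\mu_0+1}x/\xi)$ and the prefactor $\xi^{\chi+1}$ in the stated formula. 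The surviving $q$-powers and the difference $q^{\alpha_2'-\alpha_1'}-1$ (coming from the leading $s$-coefficients of $q\tilde a(qs)$ versus $\tilde c(s)$, whose $q^{\tilde\alpha_i}$ factors translate to $q^{2-\alpha_i'}$) produce $g_2(x)=C_2 x^{-\alpha_1}\vartheta_q(q^{-\mu_0-\chi}x/\xi)/\vartheta_q(q^{-\mu_0+1}x/\xi)\cdot\xi^{\chi+1}q^{\mu_0+\alpha_1}(q^{\alpha_2'-\alpha_1'}-1)$. Finally, Theorem~\ref{thm:KFqint} gives $\{a(x)T_x^{-1}+b(x)+c(x)T_x\}g(x)=(1-q)(g_2(x)-g_1(x))$, which with the definition of $a,b,c$ is exactly \eqref{eq:thmA4gEgg2g1}; the special case $C_1=C_2=0$ is immediate. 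I expect the main obstacle to be the convergence bookkeeping in the second paragraph—tracking exactly which power of $s$ appears in the summand at both ends and confirming the exponents match $\lambda_+'$ and $-\alpha_1'$ (so that the Jackson sums are genuinely convergent rather than merely formal)—together with getting every $q$-exponent in $g_1,g_2$ right when passing between the tilde-parameters and the primed parameters.
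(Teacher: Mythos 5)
Your route is exactly the paper's: realize \eqref{eq:Phixs} through Theorem \ref{thm:A4KerId} with $\Phi(x,s)=x^{-\alpha_1}s^{1+\chi-\tilde\alpha_1}P^{(1)}_{\mu,\mu_0}(x,s)$, identify \eqref{eq:A4hs} with \eqref{eq:A4hsE'} via the dictionary \eqref{eq:A4til'}, compute the two boundary limits in \eqref{eq:g1g2limLK}, verify convergence of the Jackson sum, and invoke Theorem \ref{thm:KFqint}; your parameter bookkeeping and your derivations of $g_1$ and $g_2$ agree with the paper's proof.

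The one step that would fail as written is the convergence paragraph, where you have the two tails of the sum interchanged. Since $0<|q|<1$, the tail $n\to+\infty$ has $s=q^n\xi\to 0$: there $P^{(1)}_{\mu,\mu_0}(x,s)\to 1$ by \eqref{eq:P12lim1} and $h(s)s^{-\lambda'_+}\to C_1$ by \eqref{eq:A4hslim}, so the summand $a_n=s\,h(s)\,s^{-\lambda'_+}P^{(1)}_{\mu,\mu_0}(x,s)|_{s=q^n\xi}$ is $O(|q|^{n})$ with no theta conversion needed. The tail $n\to-\infty$ has $s\to\infty$, where the limit $P^{(1)}\to 1$ is simply not available; one must rewrite $a_n=s^{-1}\bigl(h(s)s^{\alpha'_1}\bigr)\bigl(s^{\mu-\mu_0}P^{(1)}_{\mu,\mu_0}(x,s)\bigr)$ and use \eqref{eq:P2P1theta} together with the second limit in \eqref{eq:P12lim1} to see that $s^{\mu-\mu_0}P^{(1)}_{\mu,\mu_0}(x,s)$ converges along the lattice, whence $a_n=O(|q|^{|n|})$. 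If instead you only use $h(s)\sim C_2 s^{-\alpha'_1}$ and treat $P^{(1)}$ as tending to $1$ (or as bounded), the summand behaves like $s^{\chi}$, which need not decay, so the extra factor $s^{-(1+\chi)}$ that $P^{(1)}$ contributes at $s\to\infty$ is essential; also $\lambda'_-$ plays no role here, only $\lambda'_+$ and $-\alpha'_1$ enter. You do pair the ends correctly when computing $g_1$ (at $s\to 0$, using $P^{(1)}\to1$) and $g_2$ (at $s\to\infty$, using the theta conversion), so the fix is already contained in your own text. A minor further point: the paper later applies this theorem with $\xi$ proportional to $x$, so one should not discard the case $\xi=Ax$ of Theorem \ref{thm:KFqint}; the same convergence estimates (applied also to $g(q^{\pm1}x)$) cover it, which is why the paper's proof simply verifies all hypotheses of Theorem \ref{thm:KFqint} rather than restricting to $\xi$ independent of $x$.
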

\begin{proof}
On the given parameters $h'_1$, $h'_2$, $l'_1$, $l'_2$, $\alpha '_1$, $\alpha '_2$, $\beta '$, $E' $, we define the parameters $\tilde{h}_1$, $\tilde{h}_2$, $\tilde{l}_1$, $\tilde{l}_2$, $\tilde{\alpha }_1$, $\tilde{\alpha }_2$, $\tilde{\beta }$, $\tilde{E} $ by equation \eqref{eq:A4til'}.
Then it follows from equation \eqref{eq:A4hsE'} that the function $h(s)$ satisfies equation \eqref{eq:A4hs} with equation \eqref{eq:A4hscond}.
By combining equation \eqref{eq:thmA4param} with equation \eqref{eq:A4til'}, we obtain equation \eqref{eq:A4KerIdp}.

It follows from equation \eqref{eq:A4hslim} that \smash{$h(s) s^{1+ \chi - \tilde{\alpha }_1} | _{s=q^{L} \xi } \to C_1 $} as $L \to +\infty $.
On the first limit in equation \eqref{eq:g1g2limLK}, we have
\begin{gather*}
 x^{- \alpha _1} \big[ q s \tilde{a}(q s) h(qs) s^{1+\chi - \tilde{\alpha }_1} P^{(1)}_{\mu ,\mu _0 } (x,s) - s \tilde{c}(s) h(s) (qs)^{1+\chi - \tilde{\alpha }_1} P^{(1)}_{\mu ,\mu _0 } (x,qs) \big] \big| _{s=q^{L} \xi } \\
\qquad = q^{\nu } x^{- \alpha _1} \big[ \big(qs-q^{\tilde{h}_1 + 1/2} t_1\big) \big(qs-q^{\tilde{h}_2 +1/2} t_2\big) q^{-1- \chi + \tilde{\alpha }_1} P^{(1)}_{\mu ,\mu _0 } (x,s) h(qs) (qs)^{1+ \chi - \tilde{\alpha }_1} \\
\phantom{\qquad=}{} - q^{\tilde{\alpha } _1 +\tilde{\alpha }_2} \big(s - q^{\tilde{l}_1 -1/2} t_1\big) \big(s - q^{\tilde{l}_2 -1/2} t_2\big) q^{1+\chi - \tilde{\alpha }_1} P^{(1)}_{\mu ,\mu _0 } (x,qs) h(s) s^{1+ \chi - \tilde{\alpha }_1 } \big] \big| _{s=q^{L} \xi } \\
\qquad {} \to C_1 x^{- \alpha _1} q^{\nu } \big( q^{\tilde{h}_1 + \tilde{h}_2 - \chi + \tilde{\alpha }_1} - q^{\tilde{\alpha } _1 +\tilde{\alpha }_2 + \tilde{l}_1 + \tilde{l}_2 +\chi - \tilde{\alpha }_1} \big) t_1 t_2 \\
\qquad = C_1 x^{- \alpha _1} q^{ \mu _0 + \alpha _1 + (\tilde{h}_1 + \tilde{h}_2 + \tilde{l}_1 + \tilde{l}_2 + \tilde{\alpha }_1 - \tilde{\alpha }_2 )/2} \big( q^{ \tilde{\beta }/2} - q^{- \tilde{\beta } /2} \big) t_1 t_2
\end{gather*}
as $L \to +\infty $. Here we used \smash{$P^{(1)}_{\mu ,\mu _0 } (x,s) \to 1$} as $s \to 0 $ (see equation \eqref{eq:P12lim1}).
Therefore, the first limit in equation \eqref{eq:g1g2limLK} converges and we have
\begin{equation*}
g_1 (x)= C_1 x^{- \alpha _1} q^{\mu _0 + \alpha _1 + (h'_1 + h'_2 + l'_1 + l'_2 -\alpha '_1 + \alpha '_2 )/2} \big( q^{ \beta '/2} - q^{- \beta ' /2} \big) t_1 t_2 .
\end{equation*}
It also follows from equation \eqref{eq:A4hslim} that $h(s) s^{2- \tilde{\alpha }_1} | _{s=q^{K-1} \xi } \to C_2 $ as $K \to -\infty $.
On the second limit in equation \eqref{eq:g1g2limLK}, we have
\begin{gather*}
 x^{- \alpha _1} \big[ q s \tilde{a}(q s) h(qs) s^{1+\chi - \tilde{\alpha }_1} P^{(1)}_{\mu ,\mu _0 } (x,s) - s \tilde{c}(s) h(s) (qs)^{1+\chi - \tilde{\alpha }_1} P^{(1)}_{\mu ,\mu _0 } (x,qs) \big] \big| _{s=q^{K-1} \xi } \\
 \qquad= x^{- \alpha _1} (x/\xi )^{\mu _0 - \mu } \frac{\vartheta _q \big(q^{- \mu +1 } x/\xi \big)}{\vartheta _q \big(q^{- \mu _0 +1 } x/\xi \big)} \big[ q^{\tilde{\alpha }_1 -1} s^{-1} \tilde{a}(q s) s^{1+\chi } P^{(2)}_{\mu ,\mu _0 } (x,s) h(qs) (qs)^{2 - \tilde{\alpha }_1} \\
 \phantom{\qquad=}{} - s^{-1} \tilde{c}(s) q^{- \tilde{\alpha }_1} (qs)^{1+\chi } P^{(2)}_{\mu ,\mu _0 } (x,qs) h(s) s^{ 2 - \tilde{\alpha }_1} \big] \big| _{s=q^{K-1} \xi } \\
\qquad {}\to x^{- \alpha _1} (x/\xi )^{-1-\chi } \frac{\vartheta _q \big(q^{- \mu +1 } x/\xi \big)}{\vartheta _q \big(q^{- \mu _0 +1 } x/\xi \big)}q^{\nu } \big( q^{\tilde{\alpha }_1 } x^{1+\chi } C_2 - q^{\tilde{\alpha }_2} x^{1+\chi } C_2 \big) \\
\qquad = C_2 x^{- \alpha _1} \xi ^{1+\chi } \frac{\vartheta _q \big(q^{- \mu _0 - \chi } x/\xi \big)}{\vartheta _q \big(q^{- \mu _0 +1 } x/\xi \big)} q^{\mu _0 + \alpha _1 - \tilde{\alpha } _2 } \big( q^{\tilde{\alpha }_1 } - q^{\tilde{\alpha }_2} \big)
\end{gather*}
as $K \to -\infty $. Here we used equation \eqref{eq:P12lim1}.
Therefore, the second limit in equation \eqref{eq:g1g2limLK} converges and we have
\begin{equation*}
g_2(x)= C_2 x^{- \alpha _1 } \xi ^{\chi + 1 } \frac{\vartheta _q (q^{- \mu _0 - \chi } x/\xi )}{\vartheta _q \big(q^{- \mu _0 +1 } x/\xi \big)} q^{\mu _0 + \alpha _1 } \big( q^{\alpha ' _2 - \alpha '_1} - 1 \big) .
\end{equation*}
We show convergence of the Jackson integral in equation \eqref{eq:A4Jint}, which is equivalent to convergence of the summation of the sequence $a_n$ over $n \in {\mathbb Z} $, where
\begin{gather}
a_n = s h(s) s^{1+\chi - \tilde{\alpha }_1} P^{(1)}_{\mu ,\mu _0 } (x,s) | _{s=q^{n} \xi } .
\label{eq:A4an}
\end{gather}
Since $h(s) s^{1+ \chi - \tilde{\alpha }_1} | _{s=q^{L} \xi } \to C_1 $ as $L \to +\infty $ and \smash{$P^{(1)}_{\mu ,\mu _0 } (x,s) \to 1$} as $s \to 0 $, there exists an integer~$N_1$ and a positive number $C'_1$ such that $| a_n | \leq C'_1 |q|^n $ for any integer $n$ such that $n \geq N_1$.
Hence, we obtain convergence of the summation of $a_n$ over $n \in {\mathbb Z} _{\geq 0}$, because $0 <|q| <1$.
Equation \eqref{eq:A4an} is also expressed as
\begin{gather*}
a_n = (x/\xi )^{\mu _0 - \mu } \frac{\vartheta _q \big(q^{- \mu +1 } x/\xi \big)}{\vartheta _q \big(q^{- \mu _0 +1 } x/\xi \big)} s^{-1} h(s) s^{2 - \tilde{\alpha }_1} s^{1+\chi } P^{(2)}_{\mu ,\mu _0 } (x,s) | _{s=q^{n} \xi } .
\end{gather*}
Since $h(s) s^{2- \tilde{\alpha }_1} | _{s=q^{K-1} \xi } \to C_2 $ as $K \to -\infty $ and \smash{$s^{\mu - \mu _0 } P^{(2)}_{\mu ,\mu _0 } (x,s) \to x^{\mu - \mu _0 }$} as $s \to \infty $, there exists an integer $N_2$ and a positive number $C'_2$ such that $| a_n | \leq C'_2 |q|^{-n} $ for any integer $n$ such that $n \leq N_2$.
Hence, we obtain convergence of the summation of $a_n$ over $n \in {\mathbb Z} _{\leq -1}$.
Therefore, convergence of the Jackson integral in equation \eqref{eq:A4Jint} is shown.

Recall that equation \eqref{eq:A4KerId1} gives a realization of equation \eqref{eq:Phixs} and we have the relation $E= q^{\mu _0 + \alpha _1 - \alpha ' _1 } E' = q^{\nu } \tilde{E} $ by equations \eqref{eq:A4til'} and \eqref{eq:thmA4param}.
Hence, we have confirmed the assumption of Theorem \ref{thm:KFqint}, and we obtain that the function $g(x)$ satisfies equation \eqref{eq:thmA4gEgg2g1}.
\end{proof}

In Theorem~\ref{thm:A4}, we can replace the Jackson integral in equation \eqref{eq:A4Jint} with
\begin{gather}
 g (x) = x^{- \alpha _1} \int^{\xi \infty }_{0} s^{-(h'_1 + h'_2 - l'_1 - l'_2 - \alpha '_1 - \alpha '_2 + \beta ' +2)/2} h(s) P^{(2)}_{\mu ,\mu _0 } (x,s) {\rm d}_{q}s . \label{eq:A4Jintv2}
\end{gather}
Namely, under the assumption of Theorem \ref{thm:A4}, the function $g(x)$ in equation \eqref{eq:A4Jintv2}
converges and it satisfies
\begin{equation*}
 A^{\langle 4 \rangle} ( x; h_1, h_2, l_1, l_2, \alpha _1, \alpha _2, \beta ) g (x) = E g(x) + (1-q) (g_2 (x) - g_1 (x)),
\end{equation*}
where
\begin{gather*}
 g_1 (x)= C_1 x^{- \alpha _1+ \chi +1} \xi ^{ -\chi -1 } \frac{\vartheta _q \big(q^{- \mu _0 +1 } x/\xi \big)}{\vartheta _q (q^{- \mu _0 -\chi } x/\xi )} q^{\mu _0 + \alpha _1 + h'_1 + h'_2 + \chi } \big( q^{ \beta ' } - 1 \big) t_1 t_2 , \\
 g_2(x)= C_2 x^{- \alpha _1 + \chi +1} q^{\mu _0 + \alpha _1 } \big( q^{\alpha ' _2 - \alpha '_1} - 1 \big) , \\
 \chi = \big( l'_1 + l'_2 - h'_1 - h'_2 - \alpha '_1 + \alpha '_2 - \beta ' \big)/2 , \qquad \mu = \mu _0 + 1+ \chi , \\
 E = q^{\mu _0 + \alpha _1 - \alpha ' _1 } E', \qquad \beta = - \beta ' - \chi , \qquad \alpha _2 = \alpha _1 - \alpha ' _1 + \alpha ' _2 - \chi , \\
 l_i = l'_i +\mu _0 , \qquad h_i = h'_i +\mu _0 +\chi , \qquad i=1,2.
\end{gather*}
This is proved by rewriting the proof of Theorem \ref{thm:A4} while replacing \smash{$P^{(2)}_{\mu ,\mu _0 } (x,s) $} with \smash{$P^{(1)}_{\mu ,\mu _0 } (x,s) $} by using equation \eqref{eq:P2P1theta}.

A $q$-integral transformation of the $q$-Heun equation was established in \cite[Theorem 5.4]{STT2}, and the convergence was not considered well there.
By specializing the parameters in Theorem \ref{thm:A4}, we reproduce \cite[Theorem 5.4]{STT2} with attention to the convergence as the following corollary.
\begin{Corollary} \label{cor:A4}
Assume that $ h'_1 + h'_2 - l'_1 - l'_2 - \alpha '_1 - \alpha '_2 + \beta ' +2=0$ and the function $h(s)$ satisfies
\begin{equation*}
 A^{\langle 4 \rangle} \big( s; h'_1, h'_2, l'_1, l'_2, \alpha '_1, \alpha '_2, \beta ' \big) h(s) = E' h(s)
\end{equation*}
and
\begin{equation*}
 \lim _{L \to +\infty } h(s) \big| _{s=q^{L} \xi } = 0, \qquad \lim _{K \to -\infty } \frac{h(s)}{s^{-\alpha ' _1}} \big| _{s=q^{K} \xi } = 0
\end{equation*}
for some constant $E' $.
Then the Jackson integral
\begin{equation*}
 g (x) = \int^{\xi \infty }_{0} h(s) P^{(1)}_{2-\alpha '_1 , 0} (x,s) {\rm d}_{q}s = \int^{\xi \infty }_{0} h(s)\frac{\big(q^{2-\alpha '_1 }s/x;q\big)_{\infty }}{(s/x;q)_{\infty }} {\rm d}_{q}s
\end{equation*}
converges and it satisfies
\begin{equation*}
 A^{\langle 4 \rangle} \big( x; h'_1 +1 -\alpha '_1 , h'_2 +1 -\alpha '_1 , l'_1, l'_2, 0, \alpha ' _2 - 1, \beta ' + 1 - \alpha '_1 \big) g (x) = q^{ - \alpha ' _1 } E'g(x) .
\end{equation*}
\end{Corollary}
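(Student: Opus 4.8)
The plan is to obtain Corollary~\ref{cor:A4} as the special case of Theorem~\ref{thm:A4} with $\alpha_1 = 0$ and $\mu_0 = 0$. With these choices the prefactor $x^{-\alpha_1}$ in \eqref{eq:A4Jint} disappears, and under the hypothesis $h'_1 + h'_2 - l'_1 - l'_2 - \alpha'_1 - \alpha'_2 + \beta' + 2 = 0$ the power of $s$ in the integrand of \eqref{eq:A4Jint} equals $s^0 = 1$, so the Jackson integral there reduces to $\int^{\xi \infty}_{0} h(s) P^{(1)}_{\mu ,\mu _0}(x,s)\,{\rm d}_{q}s$. It then remains to evaluate the parameters in \eqref{eq:thmA4param} and match them with those in the statement, and to identify the constants $C_1$, $C_2$ of Theorem~\ref{thm:A4}.

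First I would use the degeneration hypothesis to simplify $\chi$: from \eqref{eq:thmA4param} one has $\chi = (l'_1 + l'_2 - h'_1 - h'_2 - \alpha'_1 + \alpha'_2 - \beta')/2$, and substituting $l'_1 + l'_2 - h'_1 - h'_2 = 2 - \alpha'_1 - \alpha'_2 + \beta'$ gives $\chi = 1 - \alpha'_1$. Inserting this together with $\alpha_1 = 0$ and $\mu_0 = 0$ into the rest of \eqref{eq:thmA4param} yields $\mu = 2 - \alpha'_1$ (hence $P^{(1)}_{\mu ,\mu _0} = P^{(1)}_{2-\alpha'_1 , 0}$), $E = q^{-\alpha'_1} E'$, $\alpha_2 = \alpha'_2 - 1$, $l_i = l'_i$ and $h_i = h'_i + 1 - \alpha'_1$ for $i = 1, 2$, and $\beta = -\beta' - \chi = -(\beta' + 1 - \alpha'_1)$. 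Since $A^{\langle 4 \rangle}$ depends on $\beta$ only through the symmetric combination $q^{\beta/2} + q^{-\beta/2}$, one may replace $\beta$ by $\beta' + 1 - \alpha'_1$, which matches the seventh argument in the conclusion.

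Next I would check the boundary data. Under the degeneration hypothesis the exponent $(h'_1 + h'_2 - l'_1 - l'_2 - \alpha'_1 - \alpha'_2 + \beta' + 2)/2$ occurring in \eqref{eq:A4hslim} equals $0$, so the two limit conditions of Theorem~\ref{thm:A4} become exactly the two limits assumed in the corollary; hence $C_1 = C_2 = 0$. Applying Theorem~\ref{thm:A4} then gives convergence of the Jackson integral and, by its final ``in particular'' assertion, the identity $A^{\langle 4 \rangle}(x; h'_1 + 1 - \alpha'_1, h'_2 + 1 - \alpha'_1, l'_1, l'_2, 0, \alpha'_2 - 1, \beta' + 1 - \alpha'_1) g(x) = E g(x)$ with $E = q^{-\alpha'_1} E'$. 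The alternative expression of the kernel as $(q^{2-\alpha'_1} s/x; q)_\infty / (s/x; q)_\infty$ is simply the definition of $P^{(1)}_{2-\alpha'_1 , 0}(x,s)$.

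Every step is a substitution into formulas already established, so there is no real obstacle. The only points requiring attention are the sign of $\beta$ --- harmless because $A^{\langle 4 \rangle}$ is invariant under $\beta \mapsto -\beta$ --- and checking that the collapse of the $s$-exponent to $0$ is consistent simultaneously with the normalization of the integrand and with both boundary limits in \eqref{eq:A4hslim}.
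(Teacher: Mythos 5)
Your proposal is correct and follows exactly the paper's own route: specialize Theorem~\ref{thm:A4} with $\mu_0=0$, $\alpha_1=0$, use the degeneration hypothesis to get $\chi=1-\alpha'_1$, $\mu=2-\alpha'_1$, and read off the remaining parameters and the limits $C_1=C_2=0$. Your extra remark that Theorem~\ref{thm:A4} actually yields $\beta=-(\beta'+1-\alpha'_1)$, harmless because $A^{\langle 4\rangle}$ depends on $\beta$ only through $q^{\beta/2}+q^{-\beta/2}$, is a detail the paper leaves implicit and is correctly handled.
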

\begin{proof}
Set $\mu _0 =0$ and $\alpha _1 =0$ in Theorem \ref{thm:A4}.
It follows from $ h'_1 + h'_2 - l'_1 - l'_2 - \alpha '_1 - \alpha '_2 + \beta ' +2=0$ that $\chi = 1 -\alpha '_1 $ and $\mu = 2-\alpha ' _1 $, and we obtain the corollary from Theorem \ref{thm:A4}.
\end{proof}


We investigate a $q$-integral transformation for the variant of the $q$-Heun equation of degree three.
The operator $A^{\langle 3 \rangle} ( x; h_1, h_2, h_3, l_1, l_2, l_3, \alpha , \beta ) $ in equation \eqref{eq:qthirdal} was used to write down the variant of the $q$-Heun equation of degree three.
In Theorem \ref{thm:A3KerId}, we obtained the identity
\begin{gather}
A^{\langle 3 \rangle} ( x; h_1, h_2, h_3, l_1, l_2, l_3, \alpha , \beta ) x^{-\alpha } s^{\chi +1 - \tilde{\alpha }} P_{\mu ,\mu _0 } (x,s) \nonumber \\
\qquad = q^{\nu } A^{\langle 3 \rangle} \big( s; \tilde{h}_1, \tilde{h}_2, \tilde{h}_3, \tilde{l}_1, \tilde{l}_2, \tilde{l}_3, \tilde{\alpha } , \tilde{\beta }\big) x^{-\alpha } s^{\chi +1 - \tilde{\alpha }} P_{\mu ,\mu _0 } (x,s), \label{eq:A3KerId1}
\end{gather}
where the parameters satisfy equation \eqref{eq:A3KerIdp}.
We now apply Theorem \ref{thm:KFqint} for the variant of the $q$-Heun equation of degree three.
Let $E $ and $ \tilde{E} $ be constants such that $E = q^{\nu } \tilde{E} $ and write
\begin{align*}
& a(x) T_{x}^{-1} + b(x) +c(x) T_{x}= A^{\langle 3 \rangle} ( x; h_1, h_2, h_3, l_1, l_2, l_3, \alpha , \beta ) -E , \\
& \tilde{a}(s) T_{s}^{-1} + \tilde{b}(s) + \tilde{c}(s) T_{s} = q^{\nu } \big\{ A^{\langle 3 \rangle} \big( s; \tilde{h}_1, \tilde{h}_2, \tilde{h}_3, \tilde{l}_1, \tilde{l}_2, \tilde{l}_3, \tilde{\alpha } , \tilde{\beta } \big) - \tilde{E} \big\} . \nonumber
\end{align*}
Equation \eqref{eq:A3KerId1} gives a realization of equation \eqref{eq:Phixs} by setting $ \Phi (x,s) = x^{-\alpha } s^{\chi +1 - \tilde{\alpha }} P_{\mu ,\mu _0 } (x,s)$.
In this situation, the coefficients of the equation
\begin{gather}
\big\{ q \tilde{a}(qs) T_{s} + \tilde{b}(s) + q^{-1} \tilde{c}(s/q) T_{s}^{-1} \big\} h(s) = 0 \label{eq:A3hs}
\end{gather}
are described as
\begin{gather*}
 q\tilde{a}(qs) = q^{\nu +3 } s^{-1} \big(s-q^{\tilde{h}_1 - 1/2} t_1\big) \big(s-q^{\tilde{h}_2 -1/2} t_2\big) \big(s-q^{\tilde{h}_3 -1/2} t_3\big) , \\
 q^{-1} \tilde{c}(s/q) = q^{\nu +2\tilde{\alpha } -2} s^{-1} \big(s - q^{\tilde{l}_1 +1/2} t_1\big) \big(s - q^{\tilde{l}_2 + 1/2} t_2\big) \big(s - q^{\tilde{l}_3 + 1/2} t_3\big) , \\
 \tilde{b}(s) = q^{\nu +\tilde{\alpha } +1/2} \Biggl[ -\big(q^{1/2} +q^{-1/2} \big) s^2 +\sum _{n=1}^3 \big( q^{\tilde{h}_n} + q^{\tilde{l}_n} \big)t_n s  \\
  \phantom{ \tilde{b}(s) =}{}+ q^{(\tilde{l}_1 +\tilde{l}_2 +\tilde{l}_3 +\tilde{h}_1 +\tilde{h}_2 +\tilde{h}_3)/2} \big( q^{\tilde{\beta }/2} + q^{-\tilde{\beta }/2} \big) t_1 t_2 t_3 s^{-1} \Biggr] - q^{\nu } \tilde{E} .
\end{gather*}
Equation \eqref{eq:A3hs} is equivalent to the equation $A^{\langle 3 \rangle} \big( x; h'_1, h'_2, h'_3, l'_1, l'_2, l'_3, \alpha ', \beta ' \big) h(s) = E' h(s) $, where
\begin{gather*}
 l'_i =\tilde{h}_i , \qquad h'_i = \tilde{l}_i, \qquad i=1,2,3 , \qquad \alpha ' = 2 - \tilde{\alpha } , \qquad \beta ' = \tilde{\beta }, \qquad E' = q^{2 - 2 \tilde{\alpha } } \tilde{E } .
\end{gather*}
Note that the exponents of equation \eqref{eq:A3hs} about $s=\infty $ are $\{ \alpha ' , \alpha ' + 1 \}$, the exponents about~${s=0}$ are $\{ \lambda ' _+, \lambda '_- \}$, where $\lambda '_{\pm } = \big( h'_1 +h'_2 +h'_3 -l'_1-l'_2 -l'_3 - 2 \alpha ' \pm \beta '+2\big)/2$, and the singular point $s=\infty $ is non-logarithmic.
The values $\lambda ' _+ \bigl( = \big( h'_1 +h'_2 +h'_3 -l'_1-l'_2 -l'_3 - 2 \alpha ' + \beta '+2\big)/2 \bigr)$ and $\alpha ' +1$ will appear in Theorem \ref{thm:A3}.
As a consequence of Theorem \ref{thm:KFqint}, we obtain the following theorem.
\begin{Theorem} \label{thm:A3}
Assume that the function $h(s)$ satisfies
\begin{equation*}
 A^{\langle 3 \rangle} \big( s; h'_1, h'_2, h'_3, l'_1, l'_2, l'_3, \alpha ', \beta ' \big) h(s) = E' h(s)
\end{equation*}
and
\begin{equation*}
 \lim _{L \to +\infty } \frac{h(s)}{s^{(h'_1 + h'_2 + h'_3 - l'_1 - l'_2 - l'_3 - 2 \alpha ' + \beta ' +2)/2}} \big| _{s=q^{L} \xi } = 0, \qquad \lim _{K \to -\infty } \frac{h(s)}{s^{-\alpha ' -1}} \big| _{s=q^{K} \xi } = 0
\end{equation*}
for some constant $E' $.
Then the Jackson integral
\begin{equation*}
 g (x) = x^{- \alpha } \int^{\xi \infty }_{0} s^{-(h'_1 + h'_2 + h'_3 - l'_1 - l'_2 - l'_3 - 2 \alpha ' + \beta ' +2)/2} h(s) P^{(1)}_{\mu ,\mu _0 } (x,s) {\rm d}_{q}s
\end{equation*}
converges and it satisfies
\begin{equation*}
 A^{\langle 3 \rangle} ( x; h_1, h_2, h_3, l_1, l_2, l_3, \alpha , \beta ) g (x) = E g(x) ,
\end{equation*}
where
\begin{gather*}
 \chi = \big( l'_1 + l'_2 + l'_3 - h'_1 - h'_2 - h'_3 - \beta ' \big)/2 , \qquad E = q^{2 \mu _0 + \alpha - \alpha ' + \chi} E', \qquad \mu = \mu _0 +1+ \chi, \\
 \beta = -\beta ' - \chi , \qquad l_i = l'_i +\mu _0 , \qquad h_i = h'_i +\mu _0 +\chi ,\qquad i=1,2,3.
\end{gather*}
\end{Theorem}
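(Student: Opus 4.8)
\textbf{Proof proposal for Theorem \ref{thm:A3}.}
The plan is to apply Theorem \ref{thm:KFqint} exactly as was done for Theorem \ref{thm:A4}, using the kernel function identity \eqref{eq:A3KerId1} together with the explicit form of equation \eqref{eq:A3hs} computed just above the statement. First I would, on the given parameters $h'_1,h'_2,h'_3,l'_1,l'_2,l'_3,\alpha',\beta',E'$, define the tilded parameters $\tilde h_i=l'_i$, $\tilde l_i=h'_i$ ($i=1,2,3$), $\tilde\alpha=2-\alpha'$, $\tilde\beta=\beta'$, $\tilde E=q^{2\tilde\alpha-2}E'$, so that the hypothesis $A^{\langle 3\rangle}(s;h'_1,\dots,\beta')h(s)=E'h(s)$ becomes precisely equation \eqref{eq:A3hs} for $h(s)$. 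Choosing $\mu_0$ as the free parameter and reading off $\chi$, $\mu$, $\beta$, $l_i$, $h_i$ from \eqref{eq:A3KerIdp} (which matches the displayed relations in the theorem), one checks that \eqref{eq:A3KerId1} realizes equation \eqref{eq:Phixs} with $\Phi(x,s)=x^{-\alpha}s^{\chi+1-\tilde\alpha}P^{(1)}_{\mu,\mu_0}(x,s)$ and with $E=q^\nu\tilde E=q^{2\mu_0+\alpha-\alpha'+\chi}E'$, which is the claimed eigenvalue relation.

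Next I would verify the two boundary limits $g_1(x)$ and $g_2(x)$ of equation \eqref{eq:g1g2limLK} vanish. Using the stated hypotheses $h(s)/s^{\lambda'_+}\to 0$ as $L\to+\infty$ and $h(s)/s^{-\alpha'-1}\to 0$ as $K\to-\infty$, together with $P^{(1)}_{\mu,\mu_0}(x,s)\to 1$ as $s\to 0$ (equation \eqref{eq:P12lim1}) and the relation \eqref{eq:P2P1theta} that rewrites $P^{(1)}$ in terms of $P^{(2)}$ (with $s^{\mu-\mu_0}P^{(2)}_{\mu,\mu_0}(x,s)\to x^{\mu-\mu_0}$ as $s\to\infty$), the computation is the degree-three analogue of the two displayed limit computations in the proof of Theorem \ref{thm:A4}. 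The leading behaviour of $q\tilde a(qs)$ near $s=\infty$ contributes the $t_1t_2t_3$ constant term, which is multiplied by $h(s)s^{\chi+1-\tilde\alpha}|_{s=q^L\xi}$; since here the corresponding constant $C_1$ is zero, $g_1(x)=0$. Symmetrically, rewriting $a_n$ via $P^{(2)}$ and using $h(s)s^{\text{(appropriate power)}}\to 0$ at $K\to-\infty$ gives $g_2(x)=0$.

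Then I would establish convergence of the Jackson integral. This is again the degree-three copy of the argument in Theorem \ref{thm:A4}: writing $a_n=sh(s)s^{\chi+1-\tilde\alpha}P^{(1)}_{\mu,\mu_0}(x,s)|_{s=q^n\xi}$, the vanishing of $h(s)/s^{\lambda'_+}$ at $+\infty$ together with $P^{(1)}\to 1$ gives a bound $|a_n|\le C'_1|q|^n$ for $n\ge N_1$, controlling the tail at $n\to+\infty$; rewriting $a_n$ using \eqref{eq:P2P1theta} and $P^{(2)}$, the vanishing of $h(s)/s^{-\alpha'-1}$ at $-\infty$ together with $s^{\mu-\mu_0}P^{(2)}_{\mu,\mu_0}(x,s)\to x^{\mu-\mu_0}$ gives $|a_n|\le C'_2|q|^{-n}$ for $n\le N_2$, controlling the tail at $n\to-\infty$. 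Since $0<|q|<1$ both tails are summable, so $g(x)$ (and likewise $g(q^{\pm1}x)$) converges, and Theorem \ref{thm:KFqint} with $g_1=g_2=0$ yields $A^{\langle 3\rangle}(x;h_1,\dots,\beta)g(x)=Eg(x)$.

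I expect the only real bookkeeping obstacle to be matching the exponent shifts precisely: one must track the half-integer shifts $\tilde h_i\pm 1/2$ appearing in $q\tilde a(qs)$ and $q^{-1}\tilde c(s/q)$, the power $s^{\chi+1-\tilde\alpha}$ carried by $\Phi$, and confirm that the power-counting thresholds $\lambda'_+$ and $-\alpha'-1$ in the hypotheses are exactly the right ones to kill $g_1$, $g_2$ and to produce the $|q|^{\pm n}$ decay—i.e.\ that the exponents of equation \eqref{eq:A3hs} at $s=\infty$ and $s=0$ (stated just before the theorem as $\{\alpha',\alpha'+1\}$ and $\{\lambda'_\pm\}$) line up with the boundary terms of Proposition \ref{prop:gKL1}(i). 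Everything else is a routine transcription of the $A^{\langle 4\rangle}$ argument, with the degree of the polynomial coefficients raised from $2$ to $3$.
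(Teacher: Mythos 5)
Your proposal is correct and is essentially the paper's own argument: Theorem \ref{thm:A3} is obtained by inserting the kernel identity \eqref{eq:A3KerId1}, with the dictionary $\tilde h_i=l'_i$, $\tilde l_i=h'_i$, $\tilde\alpha=2-\alpha'$, $\tilde\beta=\beta'$, $\tilde E=q^{2\tilde\alpha-2}E'$ and the parameters read off from \eqref{eq:A3KerIdp}, into Theorem \ref{thm:KFqint} and repeating the boundary-limit and convergence analysis from the proof of Theorem \ref{thm:A4}, with the thresholds $\lambda'_+$ and $\alpha'+1$ playing exactly the roles you describe. The only slip is cosmetic: in the $g_1$ limit $L\to+\infty$ one has $s=q^L\xi\to 0$, so the $t_1t_2t_3$ constant comes from the behaviour of $s\,\tilde a(qs)$ near $s=0$ rather than $s=\infty$; this does not affect the argument.
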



We investigate a $q$-integral transformation for the variant of the $q$-Heun equation of degree four.
The operator $A^{\langle 2 \rangle} ( x; h_1, h_2, h_3, h_4, l_1, l_2, l_3, l_4 ,\alpha ) $ in equation \eqref{eq:qsecondal} was used to write down the variant of the $q$-Heun equation of degree four.
In Theorem \ref{thm:A2KerId}, we obtained the identity
\begin{gather}
 A^{\langle 2 \rangle} ( x; h_1, h_2, h_3, h_4, l_1, l_2, l_3, l_4 ,\alpha ) x^{-\alpha } s^{\chi +1 - \tilde{\alpha }} P_{\mu ,\mu _0 } (x,s) \nonumber \\
\qquad = q^{\nu }\big\{ A^{\langle 2 \rangle} \big( s; \tilde{h}_1, \tilde{h}_2, \tilde{h}_3, \tilde{h}_4, \tilde{l}_1, \tilde{l}_2, \tilde{l}_3, \tilde{l}_4 , \tilde{\alpha } \big) \big\} x^{-\alpha } s^{\chi +1 - \tilde{\alpha }} P_{\mu ,\mu _0 } (x,s), \label{eq:A2KerId1}
\end{gather}
where the parameters satisfy equation \eqref{eq:A2KerIdp}.
We now apply Theorem \ref{thm:KFqint} for the variant of the $q$-Heun equation of degree four.
Let $E $ and $ \tilde{E} $ be constants such that $E = q^{\nu } \tilde{E} $ and write
\begin{align*}
& a(x) T_{x}^{-1} + b(x) +c(x) T_{x}= A^{\langle 2 \rangle} ( x; h_1, h_2, h_3, h_4, l_1, l_2, l_3, l_4 ,\alpha ) -E , \\
& \tilde{a}(s) T_{s}^{-1} + \tilde{b}(s) + \tilde{c}(s) T_{s} =q^{\nu } \big\{ A^{\langle 2 \rangle} \big( s; \tilde{h}_1, \tilde{h}_2, \tilde{h}_3, \tilde{h}_4, \tilde{l}_1, \tilde{l}_2, \tilde{l}_3, \tilde{l}_4 , \tilde{\alpha } \big) - \tilde{E} \big\} . \nonumber
\end{align*}
Equation \eqref{eq:A2KerId1} gives a realization of equation \eqref{eq:Phixs} by setting $ \Phi (x,s) = x^{-\alpha } s^{\chi +1 - \tilde{\alpha }} P_{\mu ,\mu _0 } (x,s)$.
In this situation, the coefficients of the equation
\begin{gather}
\big\{ q \tilde{a}(qs) T_{s} + \tilde{b}(s) + q^{-1} \tilde{c}(s/q) T_{s}^{-1} \big\} h(s) = 0 \label{eq:A2hs}
\end{gather}
are described as
\begin{gather*}
 q\tilde{a}(qs) = q^{\nu +3 } s^{-2} \big(s-q^{\tilde{h}_1 - 1/2} t_1\big) \big(s-q^{\tilde{h}_2 -1/2} t_2\big) \big(s-q^{\tilde{h}_3 -1/2} t_3\big) \big(s-q^{\tilde{h}_4 -1/2} t_4\big) , \\
 q^{-1} \tilde{c}(s/q) = q^{\nu + 2\tilde{\alpha } -2} s^{-2} \big(s - q^{\tilde{l}_1 +1/2} t_1\big) \big(s - q^{\tilde{l}_2 +1/2} t_2\big) \big(s - q^{\tilde{l}_3 +1/2} t_3\big) \big(s - q^{\tilde{l}_4 +1/2} t_4\big) , \nonumber \\
 \tilde{b}(s) = q^{\nu + \tilde{\alpha } +1/2} \Biggl[ -\big(q^{1/2} +q^{-1/2} \big) s^2 + \sum _{n=1}^4 \big(q^{\tilde{h}_n}+ q^{\tilde{l}_n} \big) t_n s \nonumber \\
 \phantom{ \tilde{b}(s) = }{}  + \prod_{n=1}^4 q^{(\tilde{h}_n +\tilde{l}_n)/2} t_n \cdot \left\{ - \big(q^{1/2} +q^{-1/2} \big) s^{-2} + \sum _{n=1}^4 \left( \frac{1}{q^{\tilde{h}_n}t_n} + \frac{1}{q^{\tilde{l}_n}t_n} \right) s^{-1} \right\} \Biggr] - q^{\nu } \tilde{E} . \nonumber
\end{gather*}
Equation \eqref{eq:A2hs} is equivalent to the equation $A^{\langle 2 \rangle} \big( x; h'_1, h'_2, h'_3, h'_4, l'_1, l'_2, l'_3, l'_4, \alpha ' \big) h(s) = E' h(s) $, where
\begin{equation*}
 l'_i =\tilde{h}_i , \qquad h'_i = \tilde{l}_i , \qquad i=1,2,3,4 ,\qquad \alpha ' = 2 - \tilde{\alpha } , \qquad E' = q^{2 - 2 \tilde{\alpha } } \tilde{E } .
\end{equation*}
Note that the exponents of equation \eqref{eq:A2hs} about $s=\infty $ are $\{ \alpha ' , \alpha ' + 1 \}$, the exponents about~${s=0}$ are $\{ \lambda ' , \lambda ' +1 \}$, where $\lambda ' = \big( h'_1 +h'_2 +h'_3 +h'_4 -l'_1-l'_2 -l'_3 -l'_4 - 2 \alpha ' +2\big)/2$, and the singular points $s=0 $ and $s=\infty $ are non-logarithmic.
The values $\lambda ' _+ + 1 \bigl( = \big( h'_1 +h'_2 +h'_3 +h'_4 -l'_1-l'_2 -l'_3 -l'_4 - 2 \alpha ' +4\big)/2 \bigr)$, $\lambda ' _+ $ and $\alpha ' +1$ will appear in Theorem~\ref{thm:A2}.
As a consequence of Theorem \ref{thm:KFqint}, we obtain the following theorem.
\begin{Theorem} \label{thm:A2}
Assume that the function $h(s)$ satisfies
\begin{equation*}
 A^{\langle 2 \rangle} \big( s; h'_1, h'_2, h'_3, h'_4, l'_1, l'_2, l'_3, l'_4, \alpha ' \big) h(s) = E' h(s)
\end{equation*}
and
\begin{equation*}
 \lim _{L \to +\infty } \frac{h(s)}{s^{( h'_1 +h'_2 +h'_3 +h'_4 -l'_1-l'_2 -l'_3 -l'_4 - 2 \alpha ' +4)/2}} \big| _{s=q^{L} \xi } = 0, \qquad \lim _{K \to -\infty } \frac{h(s)}{s^{-\alpha ' -1}} \big| _{s=q^{K} \xi } = 0
\end{equation*}
for some constant $E'$.
Then the Jackson integral
\begin{equation*}
 g (x) = x^{- \alpha } \int^{\xi \infty }_{0} s^{-( h'_1 +h'_2 +h'_3 +h'_4 -l'_1-l'_2 -l'_3 -l'_4 - 2 \alpha ' +2)/2} h(s) P^{(1)}_{\mu ,\mu _0 } (x,s) {\rm d}_{q}s
\end{equation*}
converges and it satisfies
\begin{equation*}
 A^{\langle 2 \rangle} ( x; h_1, h_2, h_3, h_4, l_1, l_2, l_3, l_4 , \alpha ) g (x) = E g(x) ,
\end{equation*}
where
\begin{gather*}
 \chi = \big( l'_1 + l'_2 + l'_3 + l'_4 - h'_1 - h'_2 - h'_3 - h'_4 \big)/2 , \qquad E = q^{2 \mu _0 + \alpha - \alpha ' + \chi} E', \\
 \mu = \mu _0 +1+ \chi, \qquad l_i = l'_i +\mu _0 , \qquad h_i = h'_i +\mu _0 +\chi , \qquad i=1,2,3,4. \nonumber
\end{gather*}
\end{Theorem}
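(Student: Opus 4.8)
The plan is to mirror the proof of Theorem~\ref{thm:A4}, with $A^{\langle 2 \rangle}$ in place of $A^{\langle 4 \rangle}$, using the kernel identity of Theorem~\ref{thm:A2KerId} instead of Theorem~\ref{thm:A4KerId}, and then invoking Theorem~\ref{thm:KFqint}. First, from the given primed parameters I would define the tilde parameters by $\tilde h_i = l'_i$, $\tilde l_i = h'_i$ $(i=1,2,3,4)$, $\tilde\alpha = 2-\alpha'$ and $\tilde E = q^{2-2\tilde\alpha}E'$, and set $\nu = 2\mu_0 + \alpha - \tilde\alpha + \chi$. Together with the relations for $\chi$, $\mu$, $E$, $l_i$, $h_i$ displayed in the statement, a direct substitution shows that the parameter constraints \eqref{eq:A2KerIdp} of Theorem~\ref{thm:A2KerId} hold and that $E = q^\nu \tilde E$.

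Next I would set $\Phi(x,s) = x^{-\alpha} s^{\chi+1-\tilde\alpha} P^{(1)}_{\mu,\mu_0}(x,s)$. By the identity \eqref{eq:A2KerId1} this $\Phi$ satisfies equation \eqref{eq:Phixs} with
\begin{equation*}
a(x)T_x^{-1}+b(x)+c(x)T_x = A^{\langle 2 \rangle}(x;h_1,h_2,h_3,h_4,l_1,l_2,l_3,l_4,\alpha) - E ,
\end{equation*}
$\tilde a(s)T_s^{-1}+\tilde b(s)+\tilde c(s)T_s = q^\nu\{A^{\langle 2 \rangle}(s;\tilde h_1,\dots,\tilde l_4,\tilde\alpha)-\tilde E\}$, and, by the computation that precedes the statement, the eigenvalue equation $A^{\langle 2 \rangle}(s;h'_1,\dots,l'_4,\alpha')h(s)=E'h(s)$ is precisely equation \eqref{eq:A2hs}, i.e.\ \eqref{eq:hseq} for these $\tilde a,\tilde b,\tilde c$. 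It then remains to check the analytic hypotheses of Theorem~\ref{thm:KFqint}: convergence of the Jackson integral $g(x)=\int^{\xi\infty}_{0}h(s)\Phi(x,s)\,{\rm d}_{q}s$ and of $g(q^{\pm1}x)$, and convergence of the boundary limits $g_1,g_2$ of \eqref{eq:g1g2limLK}. For convergence I would estimate $a_n = s\,h(s)\,s^{\chi+1-\tilde\alpha}P^{(1)}_{\mu,\mu_0}(x,s)|_{s=q^n\xi}$ exactly as in the proof of Theorem~\ref{thm:A4}: from $P^{(1)}_{\mu,\mu_0}(x,s)\to1$ as $s\to0$ and the first limit hypothesis one gets $|a_n|\le C|q|^n$ for $n\gg0$; rewriting $a_n$ through $P^{(2)}_{\mu,\mu_0}$ by \eqref{eq:P2P1theta}, using $s^{\mu-\mu_0}P^{(2)}_{\mu,\mu_0}(x,s)\to x^{\mu-\mu_0}$ as $s\to\infty$ and the second limit hypothesis one gets $|a_n|\le C|q|^{-n}$ for $n\ll0$; since $0<|q|<1$ the bilateral series converges, and likewise for $g(q^{\pm1}x)$.

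Finally I would evaluate $g_1(x)$ and $g_2(x)$. Substituting the explicit coefficients $q\tilde a(qs) = q^{\nu+3}s^{-2}\prod_{n=1}^{4}(s-q^{\tilde h_n-1/2}t_n)$ and the corresponding $\tilde c(s)$ into \eqref{eq:g1g2limLK}, using $P^{(1)}_{\mu,\mu_0}(x,s)\to1$ as $s\to0$ for $g_1$ and the asymptotics of \eqref{eq:P12lim1} rewritten via $P^{(2)}_{\mu,\mu_0}$ for $g_2$, the two limits collapse to nonzero constants times $\lim_{L\to+\infty}h(s)\,s^{-(h'_1+\cdots+h'_4-l'_1-\cdots-l'_4-2\alpha'+4)/2}|_{s=q^L\xi}$ and $\lim_{K\to-\infty}h(s)\,s^{\alpha'+1}|_{s=q^{K-1}\xi}$, respectively; both vanish by hypothesis, so $g_1=g_2=0$. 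Theorem~\ref{thm:KFqint} (for $\xi$ independent of $x$) then gives $A^{\langle 2 \rangle}(x;h_1,h_2,h_3,h_4,l_1,l_2,l_3,l_4,\alpha)g(x)=Eg(x)$. I expect the main obstacle to be purely computational: one must track how the shifts introduced by $T_s$ in $q\tilde a(qs)T_s$ and $q^{-1}\tilde c(s/q)T_s^{-1}$ interact with the prefactor $s^{\chi+1-\tilde\alpha}=s^{-(h'_1+\cdots+h'_4-l'_1-\cdots-l'_4-2\alpha'+2)/2}$ and with the $s\to0$ and $s\to\infty$ asymptotics of $P^{(1)}_{\mu,\mu_0}$ and $P^{(2)}_{\mu,\mu_0}$, so that the powers of $s$ surviving in $g_1$ and $g_2$ are exactly those appearing in the two vanishing hypotheses; the underlying structural reason is that both singular points of \eqref{eq:A2hs} are non-logarithmic with exponent gap $1$, which is what fixes these particular normalizations (the extra $+1$ relative to the integrand exponent).
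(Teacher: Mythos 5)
Your proposal is correct and takes essentially the same route as the paper: Theorem~\ref{thm:A2} is obtained there precisely by feeding the kernel identity of Theorem~\ref{thm:A2KerId} (in the form \eqref{eq:A2KerId1}, with the reparametrization $l'_i=\tilde h_i$, $h'_i=\tilde l_i$, $\alpha'=2-\tilde\alpha$) into Theorem~\ref{thm:KFqint}, and then running the same convergence estimates for $a_n$ and boundary-limit computations for $g_1$, $g_2$ as in the proof of Theorem~\ref{thm:A4}, with both limits vanishing because the hypotheses force the analogues of $C_1$, $C_2$ to be zero. Two small corrections: the auxiliary eigenvalue should satisfy $E'=q^{2-2\tilde\alpha}\tilde E$, i.e.\ $\tilde E=q^{2\tilde\alpha-2}E'$ rather than your $\tilde E=q^{2-2\tilde\alpha}E'$ (otherwise $E=q^{\nu}\tilde E$ fails), and the parenthetical restriction to $\xi$ independent of $x$ is unnecessary, since Theorem~\ref{thm:KFqint} gives the same conclusion for $\xi=Ax$ once the convergence of $g\big(q^{\pm1}x\big)$ you already checked is in place.
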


\section[Special solutions of the q-Heun equation]{Special solutions of the $\boldsymbol{q}$-Heun equation} \label{sec:solqHeun}

We investigate special solutions of the $q$-Heun equation.
Recall that the $q$-Heun equation is written as
\begin{equation*}
A^{\langle 4 \rangle} ( x; h_1, h_2, l_1, l_2, \alpha _1, \alpha _2, \beta ) g(x) =Eg(x),
\end{equation*}
where $E$ is an arbitrary complex number and $A^{\langle 4 \rangle} ( x; h_1, h_2, l_1, l_2, \alpha _1, \alpha _2, \beta )$ is written as
\begin{gather*}
 A^{\langle 4 \rangle} ( x; h_1, h_2, l_1, l_2, \alpha _1, \alpha _2, \beta ) \\
 \qquad= x^{-1} \big(x-q^{h_1 + 1/2} t_1\big) \big(x-q^{h_2 +1/2} t_2\big) T_{x}^{-1} \\
\phantom{ \qquad=}{}+ q^{\alpha _1 +\alpha _2} x^{-1} \big(x - q^{l_1 -1/2} t_1\big) \big(x - q^{l_2 -1/2} t_2\big) T_{x} \\
\phantom{ \qquad=}{} -\big\{ \big(q^{\alpha _1} +q^{\alpha _2} \big) x + q^{(h_1 +h_2 + l_1 +l_2 +\alpha _1 +\alpha _2 )/2} \big( q^{\beta/2} + q^{-\beta/2} \big) t_1 t_2 x^{-1} \big\} .
\end{gather*}


To investigate monomial solutions of the $q$-Heun equation, we look for monomial eigenfunctions of the operator $A^{\langle 4 \rangle} ( x; h_1, h_2, l_1, l_2, \alpha _1, \alpha _2, \beta )$.
Although the following proposition is essentially a special case of \cite[Proposition 4.1]{TakqH}, we reformulate it and give a proof.
\begin{Proposition}[{cf.\ \cite[Proposition 4.1]{TakqH}}] \label{prop:monom}
Set $(i,i')=(1,2) $ or $(2,1)$.
If $\pm \beta = h_1 +h_2 - l_1 -l_2 +\alpha _i - \alpha _{i' } +2$, then
\begin{gather*}
 A^{\langle 4 \rangle} ( x; h_1, h_2, l_1, l_2, \alpha _1, \alpha _2, \beta ) x ^{-\alpha _i } \\
\qquad = -\big( q^{\alpha _i + h_1 + 1/2} t_1 + q^{\alpha _i + h_2 + 1/2} t_2 + q^{\alpha _{i' } +l_1 -1/2} t_1 + q^{\alpha _{i' } +l_2 -1/2} t_2 \big) x ^{-\alpha _i} .
\end{gather*}
\end{Proposition}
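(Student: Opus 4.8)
The plan is to compute $A^{\langle 4 \rangle} ( x; h_1, h_2, l_1, l_2, \alpha _1, \alpha _2, \beta ) x^{-\alpha_i}$ directly by applying the operator \eqref{eq:A4op} term by term to the monomial $x^{-\alpha_i}$, and then observe that the special relation $\pm\beta = h_1+h_2-l_1-l_2+\alpha_i-\alpha_{i'}+2$ forces a large cancellation. First I would record the action of the shift operators: $T_x^{-1} x^{-\alpha_i} = q^{\alpha_i} x^{-\alpha_i}$ and $T_x x^{-\alpha_i} = q^{-\alpha_i} x^{-\alpha_i}$. Expanding $x^{-1}(x-q^{h_1+1/2}t_1)(x-q^{h_2+1/2}t_2) = x - (q^{h_1+1/2}t_1+q^{h_2+1/2}t_2) + q^{h_1+h_2+1}t_1t_2 x^{-1}$ and similarly for the $T_x$-coefficient, I would collect the result of applying the whole operator to $x^{-\alpha_i}$ as a Laurent expression in $x$ times $x^{-\alpha_i}$, i.e.\ of the form $(\text{coeff of }x)\,x^{1-\alpha_i} + (\text{coeff of }1)\,x^{-\alpha_i} + (\text{coeff of }x^{-1})\,x^{-1-\alpha_i}$.

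The key computation is then to check that the coefficient of $x^{1-\alpha_i}$ and the coefficient of $x^{-1-\alpha_i}$ both vanish under the hypothesis on $\beta$, leaving only the constant term, which will be the asserted eigenvalue. The coefficient of $x^{1-\alpha_i}$ is $q^{\alpha_i} + q^{\alpha_1+\alpha_2-\alpha_i} - (q^{\alpha_1}+q^{\alpha_2})$; since $\alpha_1+\alpha_2-\alpha_i = \alpha_{i'}$, this is $q^{\alpha_i}+q^{\alpha_{i'}} - (q^{\alpha_1}+q^{\alpha_2}) = 0$ identically, with no condition on $\beta$ needed. The coefficient of $x^{-1-\alpha_i}$ is
\begin{gather*}
q^{\alpha_i}q^{h_1+h_2+1}t_1t_2 + q^{\alpha_1+\alpha_2-\alpha_i}q^{l_1+l_2-1}t_1t_2 - q^{(h_1+h_2+l_1+l_2+\alpha_1+\alpha_2)/2}\big(q^{\beta/2}+q^{-\beta/2}\big)t_1t_2 ,
\end{gather*}
and here the two permitted values $\beta = \pm(h_1+h_2-l_1-l_2+\alpha_i-\alpha_{i'}+2)$ are exactly what makes $q^{(h_1+h_2+l_1+l_2+\alpha_1+\alpha_2)/2}q^{\beta/2}$ equal to one of $q^{\alpha_i+h_1+h_2+1}$, $q^{\alpha_{i'}+l_1+l_2-1}$ and $q^{(\cdots)/2}q^{-\beta/2}$ equal to the other, using $\alpha_i+\alpha_{i'}=\alpha_1+\alpha_2$; so this coefficient also vanishes. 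Finally, the constant term is $-(q^{\alpha_i}(q^{h_1+1/2}t_1+q^{h_2+1/2}t_2) + q^{\alpha_{i'}}(q^{l_1-1/2}t_1+q^{l_2-1/2}t_2))$, which is precisely the claimed eigenvalue.

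I do not anticipate any genuine obstacle: the proof is an explicit Laurent-polynomial computation. The only place requiring a moment's care is the bookkeeping in the $x^{-1-\alpha_i}$ coefficient — namely matching the half-integer exponent shifts ($+1/2$ on the $h$-side, $-1/2$ on the $l$-side) and the two sign choices of $\beta$ so that the pair $\{q^{\beta/2},q^{-\beta/2}\}$ lines up correctly with $\{q^{\alpha_i+h_1+h_2+1-(h_1+h_2+l_1+l_2+\alpha_1+\alpha_2)/2},\ q^{\alpha_{i'}+l_1+l_2-1-(h_1+h_2+l_1+l_2+\alpha_1+\alpha_2)/2}\}$; verifying the exponents agree is where a sign error would hide, so I would write those two exponents out symmetrically and confirm they sum to zero and differ by $\pm\beta$. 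Everything else is routine substitution.
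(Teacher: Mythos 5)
Your proposal is correct and follows essentially the same route as the paper: apply the operator directly to the monomial (the paper writes $x^{\nu}$ and then sets $\nu=-\alpha_i$), observe that the coefficient of $x^{1-\alpha_i}$ vanishes identically since $\alpha_1+\alpha_2-\alpha_i=\alpha_{i'}$, that the coefficient of $x^{-1-\alpha_i}$ vanishes exactly under $\pm\beta=h_1+h_2-l_1-l_2+\alpha_i-\alpha_{i'}+2$, and that the remaining constant term is the stated eigenvalue. Your exponent bookkeeping for the $q^{\beta/2}+q^{-\beta/2}$ term matches the paper's computation, so no gap.
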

\begin{proof}
Let us find eigenfunctions of the operator $A^{\langle 4 \rangle} $ in the form $x^{\nu }$.
We have
\begin{gather*}
\begin{split}
& A^{\langle 4 \rangle} ( x; h_1, h_2, l_1, l_2, \alpha _1, \alpha _2, \beta ) x^{\nu } \\
& \qquad = \big(q^{-\nu } + q^{\alpha _1 +\alpha _2 +\nu } -q^{\alpha _1} -q^{\alpha _2} \big) x^{\nu +1} + \big( q^{h_1 + h_2 + 1 - \nu } + q^{\alpha _1 +\alpha _2 + l_1 + l_2 - 1 + \nu } \\
& \phantom{\qquad =}{} - q^{(h_1 +h_2 + l_1 +l_2 +\alpha _1 +\alpha _2 +\beta )/2} - q^{(h_1 +h_2 + l_1 +l_2 +\alpha _1 +\alpha _2 -\beta )/2} \big) t_1 t_2 x^{\nu -1} \\
& \phantom{\qquad =}{} -\big( q^{-\nu + h_1 + 1/2} t_1 + q^{-\nu + h_2 + 1/2} t_2 + q^{\alpha _1 + \alpha _2 + \nu + l_1 -1/2} t_1 + q^{\alpha _1 + \alpha _2 + \nu +l_2 -1/2} t_2 \big) x^{\nu } .
\end{split}
\end{gather*}
If $\nu = -\alpha _1$ or $\nu = -\alpha _2$, then the coefficient of $x^{\nu +1}$ vanishes.
In the case $\nu = -\alpha _1$ (resp.\ ${\nu = -\alpha _2 }$), the condition $\pm \beta = h_1 +h_2 - l_1 -l_2 +\alpha _1 - \alpha _2 +2 $ (resp.~$\pm \beta = h_1 +h_2 - l_1 -l_2 -\alpha _1 + \alpha _2 +2$) is sufficient for elimination of the term $x^{\nu -1} $.
Then we obtain the expression in the proposition.
\end{proof}

The monomial $ x ^{-\alpha _i} $ satisfies $\big[T_{x} ^{-1} -q^{\alpha _{i }} \big] x ^{-\alpha _i} =0 $, and existence of the monomial solution is related to the factorization of a $q$-difference operator.
Namely, if $\pm \beta = h_1 +h_2 - l_1 -l_2 +\alpha _i - \alpha _{i' } +2$, then we have
\begin{gather*}
 x^{-1} \big[ \big(x-q^{h_1 + 1/2} t_1\big) \big(x-q^{h_2 +1/2} t_2\big) - q^{\alpha _{i' }} \big(x - q^{l_1 -1/2} t_1\big) \big(x - q^{l_2 -1/2} t_2\big) T_{x} \big] \big[T_{x} ^{-1} -q^{\alpha _{i }}\big] \\
\qquad = A^{\langle 4 \rangle} ( x; h_1, h_2, l_1, l_2, \alpha _1, \alpha _2, \beta ) \\
 \phantom{\qquad = }{}+ q^{\alpha _i + h_1 + 1/2} t_1 + q^{\alpha _i + h_2 + 1/2} t_2 + q^{\alpha _{i' } +l_1 -1/2} t_1 + q^{\alpha _{i' } +l_2 -1/2} t_2 .
\end{gather*}

On a gauge transformation, the following proposition is shown readily.
\begin{Proposition}
If the function $f(x) $ satisfies
\begin{gather*}
 x^{-1} \big(x-q^{l_1 + 1/2} t_1\big) \big(x-q^{h_2 +1/2} t_2\big) f(x/q) + q^{\alpha _1 +\alpha _2} x^{-1} \big(x - q^{h_1 -1/2} t_1\big) \big(x - q^{l_2 -1/2} t_2\big) f(qx) \\
 \qquad-\big\{ \big(q^{\alpha _1} +q^{\alpha _2} \big) x + q^{(h_1 +h_2 + l_1 +l_2 +\alpha _1 +\alpha _2 )/2} \big( q^{\beta/2} + q^{-\beta/2} \big) t_1 t_2 x^{-1} \big\} f(x) = E f(x)
\end{gather*}
and
\begin{equation*}
 g_1(x) = \frac{\big(q^{h_1 + 1/2} t_1/x; q\big)_{\infty }}{\big(q^{l_1 + 1/2} t_1/x; q\big)_{\infty }} f(x) , \qquad g_2 (x) = x ^{h_1 -l_1} \frac{\big(x/\big(q^{l_1 - 1/2} t_1\big); q\big)_{\infty }}{(x/(q^{h_1 - 1/2} t_1) ; q)_{\infty }} f(x),
\end{equation*}
then the functions $g_1 (x)$ and $g_2 (x)$ satisfy
\begin{gather*}
 x^{-1} \big(x-q^{h_1 + 1/2} t_1\big) \big(x-q^{h_2 +1/2} t_2\big) g(x/q) + q^{\alpha _1 +\alpha _2} x^{-1} \big(x - q^{l_1 -1/2} t_1\big) \big(x - q^{l_2 -1/2} t_2\big) g(qx) \\
 \qquad-\big\{ \big(q^{\alpha _1} +q^{\alpha _2} \big) x + q^{(h_1 +h_2 + l_1 +l_2 +\alpha _1 +\alpha _2 )/2} \big( q^{\beta/2} + q^{-\beta/2} \big) t_1 t_2 x^{-1} \big\} g(x) = E g(x) .
\end{gather*}
\end{Proposition}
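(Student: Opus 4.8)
The plan is to recast both $q$-difference equations in operator form and to recognise each of the maps $f \mapsto g_1$ and $f \mapsto g_2$ as a gauge transformation (multiplication by a fixed function) intertwining the two operators. Set
\begin{gather*}
 a(x) = x^{-1}\big(x-q^{l_1+1/2}t_1\big)\big(x-q^{h_2+1/2}t_2\big), \qquad A(x) = x^{-1}\big(x-q^{h_1+1/2}t_1\big)\big(x-q^{h_2+1/2}t_2\big), \\
 c(x) = q^{\alpha_1+\alpha_2}x^{-1}\big(x-q^{h_1-1/2}t_1\big)\big(x-q^{l_2-1/2}t_2\big), \qquad C(x) = q^{\alpha_1+\alpha_2}x^{-1}\big(x-q^{l_1-1/2}t_1\big)\big(x-q^{l_2-1/2}t_2\big),
\end{gather*}
and $d(x) = -\big\{\big(q^{\alpha_1}+q^{\alpha_2}\big)x + q^{(h_1+h_2+l_1+l_2+\alpha_1+\alpha_2)/2}\big(q^{\beta/2}+q^{-\beta/2}\big)t_1t_2x^{-1}\big\}$. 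The key structural observation is that $d(x)$ is common to both equations, since $h_1+h_2+l_1+l_2$ is symmetric under the exchange of $h_1$ and $l_1$. Thus the hypothesis reads $Lf = Ef$ for $L := a(x)T_x^{-1}+d(x)+c(x)T_x$, while the assertion is $Mg_j = Eg_j$ for $M := A(x)T_x^{-1}+d(x)+C(x)T_x = A^{\langle 4 \rangle}(x;h_1,h_2,l_1,l_2,\alpha_1,\alpha_2,\beta)$.

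Write $g_j = \phi_j f$, where $\phi_1(x) = (q^{h_1+1/2}t_1/x;q)_\infty/(q^{l_1+1/2}t_1/x;q)_\infty$ and $\phi_2(x) = x^{h_1-l_1}(x/(q^{l_1-1/2}t_1);q)_\infty/(x/(q^{h_1-1/2}t_1);q)_\infty$. It then suffices to establish the operator identity $\phi_j^{-1}M\phi_j = L$, because it yields $(M-E)(\phi_j f) = \phi_j\big(\phi_j^{-1}M\phi_j - E\big)f = \phi_j(L-E)f = 0$. Since $\phi_j$ acts by multiplication, $\phi_j^{-1}M\phi_j = A(x)\dfrac{\phi_j(x/q)}{\phi_j(x)}T_x^{-1}+d(x)+C(x)\dfrac{\phi_j(qx)}{\phi_j(x)}T_x$, so the identity is equivalent to the two scalar relations
\begin{gather*}
 \frac{\phi_j(x/q)}{\phi_j(x)} = \frac{a(x)}{A(x)} = \frac{x-q^{l_1+1/2}t_1}{x-q^{h_1+1/2}t_1}, \qquad \frac{\phi_j(qx)}{\phi_j(x)} = \frac{c(x)}{C(x)} = \frac{x-q^{h_1-1/2}t_1}{x-q^{l_1-1/2}t_1}.
\end{gather*}

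Both relations follow at once from $(\zeta;q)_\infty = (1-\zeta)(q\zeta;q)_\infty$. For $\phi_1$, replacing $x$ by $x/q$ multiplies the arguments $q^{h_1+1/2}t_1/x$ and $q^{l_1+1/2}t_1/x$ by $q$, so the ratio of infinite products telescopes to $(1-q^{l_1+1/2}t_1/x)/(1-q^{h_1+1/2}t_1/x)$; the shift $x\mapsto qx$ is treated identically. For $\phi_2$ one checks the same pair of relations, and here the monomial prefactor $x^{h_1-l_1}$ is exactly what is needed to absorb the stray factor $q^{\mp(h_1-l_1)}$ produced by the $q$-shifts of the infinite products. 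This elementary bookkeeping with $q$-shifted Pochhammer symbols is the whole content of the argument; there is no genuine obstacle, the only point needing care being the tracking of the powers of $q$ in the case of $\phi_2$ — which is why the proposition is "shown readily".
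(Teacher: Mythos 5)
Your proof is correct and is exactly the argument the paper has in mind when it calls the maps gauge transformations and says the proposition "is shown readily": since the middle coefficient is symmetric in $h_1\leftrightarrow l_1$, one only needs the shift ratios $\phi_j(x/q)/\phi_j(x)$ and $\phi_j(qx)/\phi_j(x)$, which follow from $(\zeta;q)_\infty=(1-\zeta)(q\zeta;q)_\infty$, the prefactor $x^{h_1-l_1}$ absorbing the stray powers of $q$ in the $\phi_2$ case. Your verification of these two scalar relations, including the power-of-$q$ bookkeeping, is accurate.
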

By applying gauge transformations to Proposition \ref{prop:monom}, we have the following solutions of the $q$-Heun equation.
\begin{Proposition} \label{prop:monomGT}
Set $(i,i')=(1,2) $ or $(2,1)$.
\begin{itemize}\itemsep=0pt
\item[$(i)$] If $\pm \beta =- h_1 +h_2 + l_1 -l_2 +\alpha _i - \alpha _{i' } +2$, then
\begin{gather*}
 A^{\langle 4 \rangle} ( x; h_1, h_2, l_1, l_2, \alpha _1, \alpha _2, \beta ) x ^{-\alpha _i} \frac{\big(q^{h_1 + 1/2} t_1/x; q\big)_{\infty }}{\big(q^{l_1 + 1/2} t_1/x; q\big)_{\infty }} \\
\qquad = -\big( q^{\alpha _i + l_1 + 1/2} t_1 + q^{\alpha _i + h_2 +1/2} t_2 + q^{\alpha _{i' } + h_1 - 1/2} t_1 + q^{\alpha _{i' } + l_2 -1/2} t_2\big) \\
 \phantom{\qquad =}{} \times x ^{-\alpha _i} \frac{\big(q^{h_1 + 1/2} t_1/x; q\big)_{\infty }}{\big(q^{l_1 + 1/2} t_1/x; q\big)_{\infty }}, \\
 A^{\langle 4 \rangle} ( x; h_1, h_2, l_1, l_2, \alpha _1, \alpha _2, \beta ) x ^{-\alpha _i +h_1 -l_1} \frac{(x/\big(q^{l_1 - 1/2} t_1\big); q)_{\infty }}{\big(x/\big(q^{h_1 - 1/2} t_1\big) ; q\big)_{\infty }} \\
\qquad= -\big( q^{\alpha _i + l_1 + 1/2} t_1 + q^{\alpha _i + h_2 +1/2} t_2 + q^{\alpha _{i' } + h_1 - 1/2} t_1 + q^{\alpha _{i' } + l_2 -1/2} t_2\big) \\
 \phantom{\qquad =}{} \times x ^{-\alpha _i +h_1 -l_1} \frac{\big(x/\big(q^{l_1 - 1/2} t_1\big); q\big)_{\infty }}{\big(x/\big(q^{h_1 - 1/2} t_1\big) ; q\big)_{\infty }} .
\end{gather*}
\item[$(ii)$] If $\pm \beta =- h_1 - h_2 + l_1 + l_2 +\alpha _i - \alpha _{i' } +2$, then
\begin{gather*}
 A^{\langle 4 \rangle} ( x; h_1, h_2, l_1, l_2, \alpha _1, \alpha _2, \beta ) x ^{-\alpha _i} \frac{\big(q^{h_1 + 1/2} t_1/x, q^{h_2 + 1/2} t_2/x; q\big)_{\infty }}{\big(q^{l_1 + 1/2} t_1/x, q^{l_2 + 1/2} t_2/x; q\big)_{\infty }} \\
\qquad = -\big( q^{\alpha _i + l_1 + 1/2} t_1 + q^{\alpha _i + l_2 +1/2} t_2 + q^{\alpha _{i' } + h_1 - 1/2} t_1 + q^{\alpha _{i' } + h_2 -1/2} t_2\big) \\
 \phantom{\qquad =}{} \times x ^{-\alpha _i} \frac{\big(q^{h_1 + 1/2} t_1/x, q^{h_2 + 1/2} t_2/x; q\big)_{\infty }}{\big(q^{l_1 + 1/2} t_1/x, q^{l_2 + 1/2} t_2/x; q\big)_{\infty }}, \\
 A^{\langle 4 \rangle} ( x; h_1, h_2, l_1, l_2, \alpha _1, \alpha _2, \beta ) x ^{-\alpha _i +h_1+h_2 -l_1 -l_2} \frac{\big(x/\big(q^{l_1 - 1/2} t_1\big), x/\big(q^{l_2 - 1/2} t_2\big); q\big)_{\infty }}{\big(x/\big(q^{h_1 - 1/2} t_1\big) , x/\big(q^{h_2 - 1/2} t_2\big) ; q\big)_{\infty }} \\
 \qquad= -\big( q^{\alpha _i + l_1 + 1/2} t_1 + q^{\alpha _i + l_2 +1/2} t_2 + q^{\alpha _{i' } + h_1 - 1/2} t_1 + q^{\alpha _{i' } + h_2 -1/2} t_2\big) \\
 \phantom{\qquad =}{} \times x ^{-\alpha _i +h_1+h_2 -l_1 -l_2} \frac{\big(x/\big(q^{l_1 - 1/2} t_1\big), x/\big(q^{l_2 - 1/2} t_2\big); q\big)_{\infty }}{\big(x/\big(q^{h_1 - 1/2} t_1\big) , x/\big(q^{h_2 - 1/2} t_2\big) ; q\big)_{\infty }} .
\end{gather*}
\end{itemize}
\end{Proposition}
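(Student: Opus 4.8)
The plan is to derive both parts from Proposition~\ref{prop:monom} by feeding its monomial eigenfunctions into the gauge-transformation proposition above. The first thing to observe is that the ``twisted'' $q$-difference operator appearing in that proposition --- the one acting on $f$ --- is nothing but $A^{\langle 4\rangle}$ with the two parameters $h_1$ and $l_1$ interchanged, that is $A^{\langle 4\rangle}\big(x;l_1,h_2,h_1,l_2,\alpha_1,\alpha_2,\beta\big)$; here one uses that the middle term of $A^{\langle 4\rangle}$ is symmetric under $h_1\leftrightarrow l_1$, so only the $T_x^{\pm1}$-coefficients change. Hence that proposition says: an eigenfunction of $A^{\langle 4\rangle}\big(x;l_1,h_2,h_1,l_2,\ldots\big)$ with eigenvalue $E$ is carried to an eigenfunction of $A^{\langle 4\rangle}\big(x;h_1,h_2,l_1,l_2,\ldots\big)$ with the \emph{same} $E$ by multiplication by $\big(q^{h_1+1/2}t_1/x;q\big)_\infty/\big(q^{l_1+1/2}t_1/x;q\big)_\infty$, or by $x^{h_1-l_1}\big(x/(q^{l_1-1/2}t_1);q\big)_\infty/\big(x/(q^{h_1-1/2}t_1);q\big)_\infty$. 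Running the identical computation with the index $1$ replaced by $2$ throughout --- legitimate because $A^{\langle 4\rangle}$ treats the triples $(t_1,h_1,l_1)$ and $(t_2,h_2,l_2)$ symmetrically --- gives the analogous statement in the $t_2$-slot: interchanging $h_2\leftrightarrow l_2$ in the operator corresponds to the gauge factors $\big(q^{h_2+1/2}t_2/x;q\big)_\infty/\big(q^{l_2+1/2}t_2/x;q\big)_\infty$ and $x^{h_2-l_2}\big(x/(q^{l_2-1/2}t_2);q\big)_\infty/\big(x/(q^{h_2-1/2}t_2);q\big)_\infty$.

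For part~$(i)$ I would apply Proposition~\ref{prop:monom} to $A^{\langle 4\rangle}\big(x;l_1,h_2,h_1,l_2,\alpha_1,\alpha_2,\beta\big)$. Substituting $(h_1,h_2,l_1,l_2)\mapsto(l_1,h_2,h_1,l_2)$ into its hypothesis $\pm\beta=h_1+h_2-l_1-l_2+\alpha_i-\alpha_{i'}+2$ produces exactly the condition $\pm\beta=-h_1+h_2+l_1-l_2+\alpha_i-\alpha_{i'}+2$ of~$(i)$, and its conclusion becomes that $x^{-\alpha_i}$ is an eigenfunction of this twisted operator with eigenvalue $-\big(q^{\alpha_i+l_1+1/2}t_1+q^{\alpha_i+h_2+1/2}t_2+q^{\alpha_{i'}+h_1-1/2}t_1+q^{\alpha_{i'}+l_2-1/2}t_2\big)$. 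Since this twisted operator is precisely the one occurring in the gauge-transformation proposition, applying that proposition with $f(x)=x^{-\alpha_i}$ immediately yields the two eigenfunctions asserted in~$(i)$, carrying the same eigenvalue.

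For part~$(ii)$ I would instead apply Proposition~\ref{prop:monom} to $A^{\langle 4\rangle}\big(x;l_1,l_2,h_1,h_2,\alpha_1,\alpha_2,\beta\big)$, i.e.\ with both interchanges $h_1\leftrightarrow l_1$ and $h_2\leftrightarrow l_2$; then the hypothesis becomes $\pm\beta=-h_1-h_2+l_1+l_2+\alpha_i-\alpha_{i'}+2$ and $x^{-\alpha_i}$ is an eigenfunction with eigenvalue $-\big(q^{\alpha_i+l_1+1/2}t_1+q^{\alpha_i+l_2+1/2}t_2+q^{\alpha_{i'}+h_1-1/2}t_1+q^{\alpha_{i'}+h_2-1/2}t_2\big)$. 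Now I would apply the $t_2$-slot version of the gauge-transformation proposition, which transports this to an eigenfunction of $A^{\langle 4\rangle}\big(x;l_1,h_2,h_1,l_2,\ldots\big)$, and then the $t_1$-slot version, which lands on $A^{\langle 4\rangle}\big(x;h_1,h_2,l_1,l_2,\ldots\big)$; multiplying the corresponding pair of gauge factors (both taken in the first form, or both in the second form) produces the two eigenfunctions listed in~$(ii)$, again with the same eigenvalue.

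I do not expect a genuine obstacle: all the underlying computations are the routine $q$-shift manipulations already carried out in the proofs of Proposition~\ref{prop:monom} and of the gauge-transformation proposition. The points that will need care are bookkeeping only --- tracking which of $h_1,h_2,l_1,l_2$ get interchanged at each stage so that the composition of gauge transformations terminates on the target $A^{\langle 4\rangle}\big(x;h_1,h_2,l_1,l_2,\ldots\big)$, and checking that after the first gauge step in~$(ii)$ the function solves exactly the twisted equation that the second gauge step takes as input. Finally, note that $\beta$ enters $A^{\langle 4\rangle}$ only through $q^{\beta/2}+q^{-\beta/2}$, so the sign ambiguity $\pm\beta$ in the hypotheses causes no difficulty.
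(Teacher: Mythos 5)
Your proposal is correct and follows the same route as the paper, which proves Proposition \ref{prop:monomGT} precisely by applying the gauge-transformation proposition (whose untransformed operator is indeed $A^{\langle 4\rangle}$ with $h_1\leftrightarrow l_1$ swapped) to the monomial eigenfunctions of Proposition \ref{prop:monom}, iterating in both the $t_1$- and $t_2$-slots for part $(ii)$. Your bookkeeping of the swapped parameters, the resulting $\pm\beta$ conditions, and the eigenvalues all check out, so the argument is complete.
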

Proposition \ref{prop:monomGT} is related to the factorization of a $q$-difference operator.
Namely, if $\pm \beta =- h_1 +h_2 + l_1 -l_2 +\alpha _i - \alpha _{i' } +2$, then
\begin{gather*}
 x^{-1} \big[ \big(x-q^{h_2 +1/2} t_2\big) - q^{\alpha_ {i' } -1} \big(x - q^{l_2 -1/2} t_2\big) T_{x} \big] \big[ \big(x-q^{h_1 + 1/2} t_1\big) T_{x} ^{-1} -q^{\alpha _i} \big(x - q^{l_1 + 1/2} t_1\big) \big] \\
\qquad = A^{\langle 4 \rangle} ( x; h_1, h_2, l_1, l_2, \alpha _1, \alpha _2, \beta ) \nonumber \\
 \phantom{\qquad =}{} + q^{\alpha _i + l_1 + 1/2} t_1 + q^{\alpha _i + h_2 +1/2} t_2 + q^{\alpha _{i' } + h_1 - 1/2} t_1 + q^{\alpha _{i' } + l_2 -1/2} t_2 ,
\end{gather*}
and, if $\pm \beta =- h_1 - h_2 + l_1 + l_2 +\alpha _i - \alpha _{i' } +2$, then
\begin{gather*}
 x^{-1} \big[ 1 - q^{\alpha _{i' } -2} T_{x} \big] \big[ \big(x-q^{h_1 + 1/2} t_1\big) \big(x-q^{h_2 + 1/2} t_2\big) T_{x} ^{-1} -q^{\alpha _i} \big(x - q^{l_1 + 1/2} t_1\big) \big(x - q^{l_2 + 1/2} t_2\big) \big] \\
\qquad = A^{\langle 4 \rangle} ( x; h_1, h_2, l_1, l_2, \alpha _1, \alpha _2, \beta ) \\
 \phantom{\qquad =}{} + q^{\alpha _i + l_1 + 1/2} t_1 + q^{\alpha _i + l_2 +1/2} t_2 + q^{\alpha _{i' } + h_1 - 1/2} t_1 + q^{\alpha _{i' } + h_2 -1/2} t_2 .
\end{gather*}

We apply the $q$-integral transformations in Theorem \ref{thm:A4} to some solutions of the $q$-Heun equation given in Propositions \ref{prop:monomGT} and \ref{prop:monom}.

Assume that the parameters in $A^{\langle 4 \rangle} \big( x; h'_1, h'_2, l'_1, l'_2, \alpha '_1, \alpha '_2, \beta '\big) $ satisfy
\begin{gather}
 \beta ' =- h'_1 +h'_2 + l'_1 -l'_2 -\alpha '_1 + \alpha '_2 +2. \label{eq:beta'1}
\end{gather}
Then it follows from Proposition \ref{prop:monomGT}\,(i) that
\begin{gather*}
 A^{\langle 4 \rangle} \big( x; h'_1, h'_2, l'_1, l'_2, \alpha '_1, \alpha '_2, \beta '\big) x ^{-\alpha '_2 +h'_1 -l'_1} \frac{\big(x/\big(q^{l'_1 - 1/2} t_1\big); q\big)_{\infty }}{\big(x/\big(q^{h'_1 - 1/2} t_1\big) ; q\big)_{\infty }} \\
\qquad = -\big( q^{\alpha '_2 + l'_1 + 1/2} t_1 + q^{\alpha '_2 + h'_2 +1/2} t_2 + q^{\alpha '_1 + h'_1 - 1/2} t_1 + q^{\alpha '_1 + l'_2 -1/2} t_2\big) \\
\phantom{\qquad =}{} \times x ^{-\alpha '_2 +h'_1 -l'_1} \frac{\big(x/\big(q^{l'_1 - 1/2} t_1\big); q\big)_{\infty }}{\big(x/\big(q^{h'_1 - 1/2} t_1\big) ; q\big)_{\infty }} .
\end{gather*}
We apply Theorem \ref{thm:A4} in the case
\begin{gather*}
 E' = -\big( q^{\alpha '_2 + l'_1 + 1/2} t_1 + q^{\alpha '_2 + h'_2 +1/2} t_2 + q^{\alpha '_1 + h'_1 - 1/2} t_1 + q^{\alpha '_1 + l'_2 -1/2} t_2\big) , \\
 h(s) = s ^{-\alpha '_2 +h'_1 -l'_1} \frac{\big(s/\big(q^{l'_1 - 1/2} t_1\big); q\big)_{\infty }}{\big(s/\big(q^{h'_1 - 1/2} t_1\big) ; q\big)_{\infty }} , \qquad \mu _0 = 0 .
\end{gather*}
It follows from equation \eqref{eq:beta'1} that the parameters in Theorem \ref{thm:A4} satisfy
\begin{gather}
 \chi = - h'_2 + l'_2 -1 , \qquad \mu = - h'_2 + l'_2 , \qquad \alpha _2 = \alpha _1 - \alpha ' _1 + \alpha ' _2 + h'_2 - l'_2 + 1 , \nonumber \\
 \beta = h'_1 - l'_1 + \alpha '_1 - \alpha '_2 - 1, \qquad l_1 = l'_1 , \qquad l_2 = l'_2 , \qquad h_1 = h'_1 - h'_2 + l'_2 -1, \nonumber \\
 h_2 = l'_2 -1 ,\nonumber\\
 E = - q^{\alpha _1 }\big( q^{\alpha '_2 - \alpha ' _1+ l'_1 + 1/2} t_1 + q^{\alpha '_2 - \alpha ' _1 + h'_2 +1/2} t_2 + q^{h'_1 - 1/2} t_1 + q^{ l'_2 -1/2} t_2\big). \label{eq:paramA41}
\end{gather}
In particular, we have $h_2 = l_2 - 1$.
The Jackson integral $g(x)$ in equation \eqref{eq:A4Jint} is written as
\begin{gather}
 g (x) = x^{- \alpha _1} \int^{\xi \infty }_{0} s^{- \beta ' } \frac{\big(s/\big(q^{l'_1 - 1/2} t_1\big); q\big)_{\infty }}{\big(s/\big(q^{h'_1 - 1/2} t_1\big) ; q\big)_{\infty }} \frac{\big(q^{ - h'_2 + l'_2}s/x;q\big)_{\infty }}{(s/x;q)_{\infty }} {\rm d}_{q}s \nonumber \\
\qquad = (1-q) \xi ^{-\beta ' +1} x^{- \alpha _1} \frac{\big(q^{ - l'_1 + 1/2} \xi / t_1, q^{ - h'_2 + l'_2} \xi /x ; q\big)_{\infty }}{\big(q^{ - h'_1 + 1/2} \xi / t_1 , \xi /x; q\big)_{\infty }} \nonumber \\
 \phantom{\qquad =}{} \times \sum ^{\infty }_{n=-\infty } \frac{\big(q^{ - h'_1 + 1/2} \xi / t_1,\xi /x ; q\big)_{n }}{\big(q^{- l'_1 + 1/2} \xi / t_1,q^{- h'_2 + l'_2} \xi /x ; q\big)_{n }} q^{(-\beta ' +1 )n} . \label{eq:gxA41}
\end{gather}
If the parameters satisfy $\beta ' <0$ and $\alpha '_1<\alpha '_2 $, then the constants $C_1$ and $C_2$ in Theorem \ref{thm:A4} are equal to $0$ respectively, the Jackson integral $g(x)$ in equation \eqref{eq:gxA41} converges and it satisfies
\begin{gather*}
 A^{\langle 4 \rangle} ( x; h_1, h_2, l_1, l_2, \alpha _1, \alpha _2, \beta ) g (x) = E g(x) ,
\end{gather*}
where the parameters satisfy equation \eqref{eq:paramA41}.
We have
\begin{gather*}
 A^{\langle 4 \rangle} ( x; h_1, h_2, l_1, l_2, \alpha _1, \alpha _2, \beta ) \\
 \qquad{}+ q^{\alpha _1 }\big( q^{\alpha '_2 - \alpha ' _1+ l'_1 + 1/2} t_1 + q^{\alpha '_2 - \alpha ' _1 + h'_2 +1/2} t_2 + q^{h'_1 - 1/2} t_1 + q^{ l'_2 -1/2} t_2\big) \\
\quad= x^{-1} \big(x-q^{ l'_2 -1/2} t_2\big) \big[ \big(x-q^{h'_1 - h'_2 + l'_2 -1/2} t_1\big)T_{x}^{-1} + q^{2 \alpha _1 - \alpha ' _1 + \alpha ' _2 + h'_2 - l'_2 + 1} \big(x - q^{l'_1 -1/2} t_1\big) T_{x} \\
 \phantom{\quad =}{}-q^{\alpha _1} \big\{ \big(1 +q^{ - \alpha ' _1 + \alpha ' _2 + h'_2 - l'_2 + 1 }\big) x - \big(q^{h'_1 - 1/2} + q^{- \alpha ' _1 + \alpha '_2 + l'_1 + 1/2} \big) t_1\big\} \big] ,
\end{gather*}
and we essentially obtain the $q$-hypergeometric equation.
If $\xi = q^{l'_1+1/2} t_1 $, then
\begin{align*}
 g (x) = {}&(1-q) \big(q^{l'_1+1/2} t_1\big) ^{-\beta ' +1} x^{- \alpha _1} \frac{\big(q , q^{ - h'_2 + l'_2 + l'_1+1/2} t_1 /x;q\big)_{\infty } }{\big(q^{ - h'_1 + l'_1+1} , q^{l'_1+1/2} t_1 /x;q\big)_{\infty } } \\
& \times {}_2\phi_1 \biggl( \begin{array}{@{}c@{}} q^{l'_1+1/2} t_1 /x, q^{ - h'_1 +l'_1+1} \\
q^{- h'_2 + l'_2 + l'_1+1/2} t_1 /x \end{array} ;q,q^{-\beta ' +1} \biggr) . \nonumber
\end{align*}
If $\xi = q^{ h'_2 - l'_2 +1} x $, then
\begin{align*}
 g (x) ={}& (1-q) \big( q^{ h'_2 - l'_2 +1} x \big) ^{-\beta ' +1} x^{- \alpha _1} \frac{\big(q^{ h'_2 - l'_1 - l'_2 +3/2} x / t_1,q ; q\big)_{\infty } }{\big(q^{ - h'_1+ h'_2 - l'_2 +3/2} x / t_1, q^{ h'_2 - l'_2 +1} ; q\big)_{\infty }} \\
& \times {}_2\phi_1 \biggl( \begin{array}{@{}c@{}}q^{ - h'_1+ h'_2 - l'_2 +3/2} x / t_1, q^{ h'_2 - l'_2 +1} \\
 q^{ h'_2 - l'_1 - l'_2 +3/2} x / t_1 \end{array} ;q,q^{-\beta ' +1} \biggr) . \nonumber
\end{align*}
These results agree with the $q$-integral representations of the $q$-hypergeometric equation obtained in \cite{AT}.


Assume that the parameters in $A^{\langle 4 \rangle} \big( x; h'_1, h'_2, l'_1, l'_2, \alpha '_1, \alpha '_2, \beta '\big) $ satisfy
\begin{gather}
 \beta ' =- h'_1 - h'_2 + l'_1 + l'_2 -\alpha '_1 + \alpha '_2 +2. \label{eq:beta'2}
\end{gather}
Then it follows from Proposition \ref{prop:monomGT}\,(ii) that
\begin{gather*}
 A^{\langle 4 \rangle} \big( x; h'_1, h'_2, l'_1, l'_2, \alpha '_1, \alpha '_2, \beta ' \big) x ^{-\alpha '_2 +h'_1+h'_2 -l'_1 -l'_2} \frac{\big(x/\big(q^{l'_1 - 1/2} t_1, x/\big(q^{l'_2 - 1/2} t_2\big); q\big)_{\infty }}{\big(x/\big(q^{h'_1 - 1/2} t_1\big) , x/\big(q^{h'_2 - 1/2} t_2\big) ; q\big)_{\infty }} \\
\qquad = -\big( q^{\alpha '_2 + l'_1 + 1/2} t_1 + q^{\alpha '_2 + l'_2 +1/2} t_2 + q^{\alpha '_1 + h'_1 - 1/2} t_1 + q^{\alpha '_1 + h'_2 -1/2} t_2\big) \\
 \phantom{\qquad =}{}\times x ^{-\alpha '_2 +h'_1+h'_2 -l'_1 -l'_2} \frac{\big(x/\big(q^{l'_1 - 1/2} t_1\big), x/\big(q^{l'_2 - 1/2} t_2\big); q\big)_{\infty }}{\big(x/\big(q^{h'_1 - 1/2} t_1\big) , x/\big(q^{h'_2 - 1/2} t_2\big) ; q\big)_{\infty }} .
\end{gather*}
We apply Theorem \ref{thm:A4} in the case
\begin{gather*}
 E' = -\big( q^{\alpha '_2 + l'_1 + 1/2} t_1 + q^{\alpha '_2 + l'_2 +1/2} t_2 + q^{\alpha '_1 + h'_1 - 1/2} t_1 + q^{\alpha '_1 + h'_2 -1/2} t_2\big) , \\
 h(s) = s ^{-\alpha '_2 +h'_1+h'_2 -l'_1 -l'_2} \frac{\big(s/\big(q^{l'_1 - 1/2} t_1\big), s/\big(q^{l'_2 - 1/2} t_2\big); q\big)_{\infty }}{\big(s/\big(q^{h'_1 - 1/2} t_1\big) , s/\big(q^{h'_2 - 1/2} t_2\big) ; q\big)_{\infty }} , \qquad \mu _0 = 0 .
\end{gather*}
It follows from equation \eqref{eq:beta'2} that the parameters in Theorem \ref{thm:A4} satisfy
\begin{gather}
\chi = - \beta ' -1 = (- l_1 - l_2 + h_1 + h_2 + \alpha _1- \alpha _2 -1)/2 , \qquad \mu = \chi +1 , \nonumber\\
 \beta = 1, \qquad \alpha _2 = \alpha _1 - \alpha ' _1 + \alpha ' _2 - \chi , \qquad l_1 = l'_1 , \qquad l_2 = l'_2 , \qquad h_1 = h'_1 +\chi , \nonumber \\
 h_2 = h'_2 +\chi , \nonumber \\
 E = -q^{\alpha _1}\big( q^{\alpha '_2 - \alpha ' _1 + l'_1 + 1/2} t_1 + q^{\alpha '_2 - \alpha ' _1 + l'_2 +1/2} t_2 + q^{h'_1 - 1/2} t_1 + q^{ h'_2 -1/2} t_2\big) . \label{eq:paramA42}
\end{gather}
The Jackson integral $g(x)$ in equation \eqref{eq:A4Jint} is written as
\begin{align}
 g (x) ={}&x^{- \alpha _1} \int^{\xi \infty }_{0} \frac{\big(s/\big(q^{l'_1 - 1/2} t_1\big), s/\big(q^{l'_2 - 1/2} t_2\big); q\big)_{\infty }}{\big(s/\big(q^{h'_1 - 1/2} t_1\big) , s/\big(q^{h'_2 - 1/2} t_2\big) ; q\big)_{\infty }} \frac{\big(q^{\mu }s/x;q\big)_{\infty }}{(s/x;q)_{\infty }} {\rm d}_{q}s \nonumber \\
={}& (1-q) \xi x^{- \alpha _1} \frac{\big( q^{-l_1 +1/2} \xi / t_1 , q^{ -l_2 +1/2 } \xi /t_2 , q^{ \chi +1 } \xi /x ; q\big)_{\infty }}{\big(q^{ - h_1 + \chi + 1/2} \xi / t_1 , q^{ -h_2 + \chi + 1/2} \xi / t_2 , \xi /x;q\big)_{\infty }} \nonumber \\
& \times \sum ^{\infty }_{n=-\infty } \frac{\big(q^{ - h_1 + \chi + 1/2} \xi / t_1 , q^{ -h_2 + \chi + 1/2} \xi / t_2 , \xi /x;q\big)_{n }}{\big( q^{ -l_1 +1/2} \xi / t_1 , q^{ -l_2 +1/2 } \xi /t_2 , q^{\chi +1 } \xi /x ; q\big)_{n }} q^{n} . \label{eq:gxA42}
\end{align}
If the parameters satisfy $\alpha '_1<\alpha '_2 $, then the constants $C_1$ and $C_2$ in Theorem \ref{thm:A4} satisfy $C_1 =1$ and $C_2 =0$, the Jackson integral $g(x)$ in equation \eqref{eq:gxA42} converges and it satisfies
\begin{gather}
 A^{\langle 4 \rangle} ( x; h_1, h_2, l_1, l_2, \alpha _1, \alpha _2, \beta ) g (x)\nonumber\\
 \qquad= E g(x) - (1-q) x^{- \alpha _1} q^{ \alpha _1 + h_1 + h_2 - \chi } \big( q^{ - \chi -1} - 1 \big) t_1 t_2 , \label{eq:thmA4gEgg1}
\end{gather}
where the parameters satisfy equation \eqref{eq:paramA42}.
Note that the homogeneous version of equation~\eqref{eq:thmA4gEgg1} is written in the variant of the $q$-hypergeometric equation of degree two (see \cite{HMST} for the definition), and our results agree with the results obtained in \cite{AT}.


Assume that the parameters in $A^{\langle 4 \rangle} \big( x; h'_1, h'_2, l'_1, l'_2, \alpha '_1, \alpha '_2, \beta '\big) $ satisfy
\begin{gather}
 \beta ' =h'_1 +h'_2 - l'_1 -l'_2 -\alpha '_1 + \alpha '_2 +2. \label{eq:beta'3}
\end{gather}
Then it follows from Proposition \ref{prop:monom} that
\begin{gather*}
 A^{\langle 4 \rangle} \big( x; h'_1, h'_2, l'_1, l'_2, \alpha '_1, \alpha '_2, \beta ' \big) x ^{-\alpha '_2} \\
\qquad = -\big( q^{\alpha '_2 + h'_1 + 1/2} t_1 + q^{\alpha ' _2 + h'_2 + 1/2} t_2 + q^{\alpha '_1 +l'_1 -1/2} t_1 + q^{\alpha '_1 +l'_2 -1/2} t_2\big) x ^{-\alpha '_2} .
\end{gather*}
We apply Theorem \ref{thm:A4} in the case
\begin{gather*}
 E' = -\big( q^{\alpha '_2 + h'_1 + 1/2} t_1 + q^{\alpha ' _2 + h'_2 + 1/2} t_2 + q^{\alpha '_1 +l'_1 -1/2} t_1 + q^{\alpha '_1 +l'_2 -1/2} t_2\big) , \\
 h(s) = s ^{-\alpha '_2} , \qquad \mu _0 = 0 .
\end{gather*}
It follows from equation \eqref{eq:beta'3} that the parameters in Theorem \ref{thm:A4} satisfy
\begin{gather}
 \chi = - \beta ' -\alpha '_1 + \alpha '_2 +1 , \qquad \mu = \chi +1 , \qquad \alpha _2 = \alpha _1 + \beta ' - 1, \nonumber \\
 \beta = \alpha '_1 - \alpha '_2 -1 , \qquad l_1 = l'_1 ,\qquad l_2 = l'_2 , \qquad h_1 = h'_1 +\chi ,\qquad h_2 = h'_2 +\chi , \nonumber \\
 E = -q^{\alpha _1}\big(q^{\alpha '_2 - \alpha ' _1 + h'_1 + 1/2} t_1 + q^{\alpha ' _2 - \alpha ' _1 + h'_2 + 1/2} t_2 + q^{\alpha '_1 +l'_1 -1/2} t_1 + q^{\alpha '_1 +l'_2 -1/2} t_2\big) . \label{eq:paramA43}
\end{gather}
In particular, we have $ \beta + h_1 +h_2 -l_1 -l_2 -\alpha _1 +\alpha _2 +2=0$.
The Jackson integral $g(x)$ in equation \eqref{eq:A4Jint} is written as
\begin{align}
 g (x) ={}& x^{- \alpha _1} \int^{\xi \infty }_{0} s^{-\beta ' } \frac{\big(q^{- \beta ' -\alpha '_1 + \alpha '_2 +2}s/x;q\big)_{\infty }}{(s/x;q)_{\infty }} {\rm d}_{q}s \nonumber \\
={}& (1-q) \xi ^{-\beta ' +1} x^{- \alpha _1} \frac{\big(q^{- \beta ' -\alpha '_1 + \alpha '_2 +2 } \xi /x;q\big)_{\infty }}{(\xi /x;q)_{\infty }}\nonumber \\
&\times\sum ^{\infty }_{n=-\infty } \frac{(\xi /x;q)_{n }}{\big(q^{- \beta ' -\alpha '_1 + \alpha '_2 +2 } \xi /x;q\big)_{n }} q^{(-\beta ' +1 )n} . \label{eq:gxA43}
\end{align}
If the parameters satisfy $\beta ' <0$ and $\alpha '_1<\alpha '_2 $, then the constants $C_1$ and $C_2$ in Theorem \ref{thm:A4} are equal to $0$ respectively, the Jackson integral $g(x)$ in equation \eqref{eq:gxA43} converges and it satisfies
\begin{equation*}
 A^{\langle 4 \rangle} ( x; h_1, h_2, l_1, l_2, \alpha _1, \alpha _2, \beta ) g (x) = E g(x) ,
\end{equation*}
where the parameters satisfy equation \eqref{eq:paramA43}.

Recall that the Ramanujan's sum for $\! _1 \psi _1 (a; b ;q,z) $ (the bilateral summation formula) is written as
\begin{align}
& \sum _{n=-\infty }^{\infty } \frac{(a;q)_n}{(b;q)_n} z^n = \frac{(q,b/a,az,q/(az);q)_{\infty }}{(b,q/a,z,b/(az);q)_{\infty }} \label{eq:Ram}
\end{align}
for $|b/a|<|z|<1$ and $ |q|<1$.
It follows from equation \eqref{eq:Ram} in the case $a= \xi /x$, ${b= q^{- \beta ' -\alpha '_1 + \alpha '_2 +2 } \xi /x}$, $z= q^{-\beta ' +1 } $ that the function $g (x)$ in equation \eqref{eq:gxA43} is written as
\begin{gather}
 g(x) = x^{- \alpha _1} \frac{\big( q^{-\beta ' +1 } \xi /x , x /\big( q^{-\beta ' } \xi \big) , q^{- \beta ' -\alpha '_1 + \alpha '_2 +2 } , q ;q\big)_{\infty }}{\big(\xi /x, q x/ \xi , q^{-\beta ' +1 } , q^{-\alpha '_1 + \alpha '_2 +1 } ;q\big)_{\infty }} . \label{eq:gxA431}
\end{gather}
Since $\vartheta _q (q a x)/ \vartheta _q (q b x) = (b/a) \vartheta _q (a x)/ \vartheta _q (b x)$, the function $ g (x) $ in equation \eqref{eq:gxA431} behaves the same as $ x^{- \alpha _2}$ on the transformation $x \mapsto qx$.
Therefore, we may conclude that the $q$-integral transformation on this case produces another monomial solution from a monomial solution.

\section{Concluding remarks} \label{sec:CR}
In this paper, we found kernel function identities for the $q$-Heun equation and the variants of the $q$-Heun equation of degree three and four.
We applied them to obtain $q$-integral transformations of solutions to the $q$-Heun equation and its variants.
Moreover, we investigated special solutions of the $q$-Heun equation from the perspective of the $q$-integral transformation.

We give some comments on issues related to the results in this paper.

The variants of the $q$-Heun equation was found by considering degenerations of the Ruijse\-naars--van Diejen system, and kernel functions of the Ruijse\-naars--van Diejen system had been found in \cite{KNS,Rui06,Rui09}.
On the other hand, we found kernel functions of the $q$-Heun equation and its variants directly in this paper.
It is expected to obtain degenerations of kernel functions from the non-degenerate Ruijsenaars--van Diejen system, although calculation would be extremely complicated without nice ideas.
Extension of our identities of kernel functions to the multivariable cases is also expected.

We investigated $q$-integral transformations of monomial-type solutions of the $q$-Heun equation in Section~\ref{sec:solqHeun}.
Polynomial solutions of the $q$-Heun equation and its variants were investigated in \cite{KST,TakqH}, and it is expected to study $q$-integral transformations related to the polynomial-type solutions.
Results in \cite{TakIT} for Heun's differential equation might be helpful on this direction.
Suitable analogues of Corollary \ref{cor:A4} to integral transformations of the variants of $q$-Heun equations (Theorems \ref{thm:A3} and \ref{thm:A2}) might be useful for further study.

In \cite{STT2}, the $q$-middle convolution by Sakai and Yamaguchi \cite{SY} was applied to the linear $q$-difference equation related with the $q$-Painlev\'e equation whose symmetry is of type \smash{$D^{(1)}_5$}, and the $q$-integral transformation of the $q$-Heun equation was obtained as a corollary.
The variants of the $q$-Heun equation of degree three and four are related with the $q$-Painlev\'e equations of~type \smash{$E^{(1)}_6$} and \smash{$E^{(1)}_7$} through the space of initial conditions \cite{STT1}.
In this paper, we obtained $q$-integral transformations of solutions to the $q$-Heun equation and its variants by using the identities of the kernel functions.
It seems that relationship between the variants of the $q$-Heun equation of degree three and four with the $q$-Painlev\'e equation of type \smash{$E^{(1)}_6$} and \smash{$E^{(1)}_7$} through the $q$-middle convolution is not fully understood.
The paper \cite{FN} by Fujii and Nobukawa might be related to this problem.

\subsection*{Acknowledgements}

The author is grateful to Yumi Arai for discussion.
He also thanks to the referees for valuable comments.
He is supported by JSPS KAKENHI Grant Number JP22K03368.

\pdfbookmark[1]{References}{ref}
\LastPageEnding

\end{document}